\newcommand{\un}{{\rm 1\kern -0.4em 1}}
\newcommand{\Ni}{\mathbb{N}}
\newcommand{\CC}{\mathbb{C}}
\newcommand{\NN}{\mathbb{N}}
\newcommand{\RR}{\mathbb{R}}
\newcommand{\UU}{\mathbb{U}}
\newcommand{\ZZ}{\mathbb{Z}}
\newtheorem{Theorem}{Theorem}[section]
\newtheorem{Proposition}[Theorem]{Proposition}
\newtheorem{Lemma}[Theorem]{Lemma}
\newtheorem{Corollary}[Theorem]{Corollary}
\newtheorem{Remark}[Theorem]{Remark}
\newtheorem{Fact}[Theorem]{Fact}
\newtheorem{Remarks}[Theorem]{Remarks}
\newtheorem{defin}[Theorem]{Definition}
\newenvironment{defi}{\begin{defin} \rm}{\end{defin}}
\newtheorem{defins}[Theorem]{Definitions}
\newenvironment{defis}{\begin{defins} \rm}{\end{defins}}
\newtheorem{example}[Theorem]{Example}
\newenvironment{examp}{\begin{example} \rm}{\end{example}}
\newtheorem{examples}[Theorem]{Examples}
\newenvironment{examps}{\begin{examples} \rm}{\end{examples}}
\newtheorem{notat}[Theorem]{Notation}
\newtheorem{notats}[Theorem]{Notations}
\newenvironment{notas}{\begin{notats} \rm}{\end{notats}}
\title{MV-algebras and partially cyclically ordered groups.}
\author{G. Leloup}
\date{December 12, 2018}
\address{G.\ Leloup\\ 
Laboratoire Manceau de Math\'ematiques\\
Facult\'e des Sciences\\
avenue Olivier Messiaen\\
72085 LE MANS CEDEX\\
FRANCE}
\email{gerard.leloup@univ-lemans.fr}
\begin{document}
\footnote{2010 {\it Mathematics Subject Classification}.  06B99, 06F99.}
\begin{abstract} 
We prove that there exists a functorial correspondence between MV-algebras and partially 
cyclically ordered groups which are wound round of lattice-ordered groups. 
It follows that some results about cyclically ordered groups can be 
stated in terms of MV-algebras. For example, the study of groups together with 
a cyclic order allows to get a first-order characterization of groups of unimodular complex 
numbers and of finite cyclic groups. We deduce a characterization of pseudofinite MV-chains 
and of  pseudo-simple MV-chains (i.e.\ which share the same first-order properties as some 
simple ones). We can generalize these results to some non-lineraly ordered 
MV-algebras, for example hyper-archimedean MV-algebras. 
\end{abstract}
\maketitle
\noindent  Keywords: MV-algebras, MV-chains, partially cyclically ordered abelian groups,  
cyclically ordered abelian groups, pseudofinite. 
\section{Introduction.}
\indent 
This article has been written in such a way that it can be read by someone who does not have prior knowledge 
of cyclically ordered groups. We list all the definitions and properties that we need. We also list all the definitions and properties about MV-algebras and logic that we need. 
We try to bring back whenever possible to properties partially ordered groups, lattice-ordered groups or 
linearly ordered groups. \\
\indent Unless otherwise stated the groups are abelian groups.\\ 
\indent 
Every MV-algebra can be obtained in the following way. Let $(G,\leq ,\wedge,u)$ be a 
lattice-ordered group (in short $\ell$-group) 
together with a distinguished strong unit $u>0$ (i.e.\ for every $x\in G$ there 
is a positive 
integer $n$ such that $x\leq nu$); such a group will be called a unital $\ell$-group. 
We set $[0,u]:=\{x\in G\mid 0\leq x \leq u\}$. For every 
$x$, $y$ in $[0,u]$ we let $x\oplus y=(x+y)\wedge u$ and $\neg x=u-x$. We see that the 
restriction of the partial order $\leq$ to $[0,u]$ can be defined by $x\leq y\Leftrightarrow 
\exists z\; y=x\oplus z$. \\ 
\indent 
Now, the quotient group $C=G/\ZZ u$ can be equipped with a partial cyclic order. 
First, we explain what is a cyclic order. On a circle $C$, there is no canonical linear order, but there 
exists a canonical cyclic order. Assume that one traverses a circle counterclockwise. 
We set that $R(x,y,z)$ holds if one can find $x$, $y$, $z$ in this order starting from some point of 
the circle. Now, starting from another point one can find them in the order $y$, $z$, $x$ or 
$z$, $x$, $y$. So in turn $R(y,z,x)$ and $R(z,x,y)$ hold. We say that $R$ is cyclic. Furthermore, 
for any $x$ in $C$ the relation $y<_x z\Leftrightarrow R(x,y,z)$ is a relation of linear order 
on the set $C\backslash \{x\}$. \\[2mm]
\setlength{\unitlength}{1mm}
\begin{picture}(200,20)(0,0)
\put(20,10){\circle{14}}
\put(20,16){\line(0,1){2}}
\put(26,10){\line(1,0){2}}
\put(12,10){\line(1,0){2}}
\put(20,20){\makebox(0,0){$y$}}
\put(30,10){\makebox(0,0){$x$}}
\put(10,10){\makebox(0,0){$z$}}
\put(30,15){\vector(-1,1){4}}
\end{picture}
These rules give the definition of a cyclic order. We generalize this definition to a 
partial cyclic order by assuming that $<_x$ is a partial order relation which needs not be 
a linear order. \\ 
\indent  
Turning to the cyclic order $R(\cdot,\cdot,\cdot)$ on the quotient group $C=G/\ZZ u$ it is defined by setting 
$R(x_1+\ZZ u,x_2+\ZZ u,x_3+\ZZ u)$ if there exists 
$n_2$ and $n_3$ in $\ZZ$ such that $x_1<x_2+n_2u<x_3+n_3u<x_1+u$ 
(see Proposition \ref{prop37}). One can prove that 
$(C,R)$ satisfies for every $x$, $y$, $z$, $v$ in $C$: \\ 
\indent 
$R(x,y,z) \Rightarrow x \neq y \neq z \neq x$ ($R$ is strict)\\ 
\indent  
$R(x,y,z) \Rightarrow R(y,z,x)$ ($R$ is cyclic)\\ 
\indent  
by setting $y\leq_x z$ if either 
$R(x,y,z)$ or $x\neq y=z$ or $x=y\neq z$ or $x=y=z$, then  $\leq_x$ is a partial order relation on $C$. \\ 
\indent 
$R(x,y,z) \Rightarrow R(x+v,y+v,z+v)$ ($R$ is compatible). \\ 
\indent  
Any group equipped with a ternary relation which satisfies those properties is called a partially cyclically 
ordered group. If all the orders $\leq_x$ 
are linear orders, then $(C,R)$ is called a cyclically ordered group. In the case where 
$C=G/\ZZ u$, where $G$ is a partially ordered group, we say that $C$ is the wound-round of 
a partially ordered group. \\ 
\indent  
If $A$ is an MV-algebra, then there is a unital $\ell$-group $(G_A,u_A)$ 
(uniquely determined up to isomorphism) such that $A$ 
is isomorphic to the MV algebra $[0,u_A]$. The group $G_A$ is called the 
Chang $\ell$-group of $A$. 
We show that the MV-algebra $A$ is definable in the partially cyclically ordered group 
$(G_A/\ZZ u_A,R)$. It follows a functorial correspondence between cyclically ordered groups and 
MV-chains (i.e.\ linearly ordered MV-algebras). So some properties of MV chains can be 
deduced from analogous properties of cyclically ordered groups. \\ 
\indent 
In \cite{Glu 83}, D.\ Glusschankov constructed 
a functor between the category of projectable MV-algebras and the category of projectable 
lattice partially ordered groups. The approach of the present paper is different, and it 
does not need to restrict to a subclass of MV-algebras. \\[2mm]
\indent In Section \ref{section2} we list basic properties of MV-algebras and of their Chang 
$\ell$-groups. We also give a few basic notions of logic which we need in this paper. 
Section \ref{section3} is devoted to partially cyclically ordered groups. 
We focus on the set of elements which are non-isolated (i.e.\ the elements $x$ such that 
there exists $y$ satisfying $R(0,x,y)$ or $R(0,y,x)$) 
and on those partially cyclically ordered groups which can 
be seen as wound-rounds of lattice-ordered groups. We also list some results of \cite{LL 13} 
on c.o.\ groups which belong to the elementary class generated by the subgroups of the 
multiplicative group $\UU$ of unimodular complex numbers. From those properties 
it follows that there is a functor $\Theta\Xi$ from the category of MV-algebras to the category of 
partially cyclically ordered groups together with c-homorphisms. In Section \ref{section4} we define a class 
$\mathcal{AC}$ of partially cyclically ordered groups $C$ in wich we can define a MV-algebra $A(C)$ 
such that $C=\Theta\Xi (A(C))$ (Theorem \ref{n44}).Then we prove that the wound-rounds of 
$\ell$-groups belong to $\mathcal{AC}$ (Therorem \ref{prop321}). 
 Furthermore, the subgroup generated by $A(C)$ being 
the wound round of an $\ell$-group is expressible by finitely many first-order formulas 
(Theorem \ref{thm416}). 
In the case of MV-chains, the one-to-one mapping 
$C\mapsto A(C)$ defines a functorial correspondence between the class of cyclically ordered groups and the class of MV-chains. 
In section \ref{section5}  we prove that if $A$ and $A'$ are two MV-chains, then 
$A$ and $A'$ are elementarily equivalent if, and only if, $C(A)$ and $C(A')$ are elementarily equivalent 
(Proposition \ref{prop53}). 
We also prove that any 
two MV-chains are elementarily equivalent if, and only if, their Chang $\ell$-groups are 
elementarily equivalent (Proposition \ref{prop55}). 
The class of pseudo-simple MV-chains is defined to be the elementary class generated by the simple 
chains. We define in the same way the pseudofinite MV-chains. 
One can prove that a pseudo-simple MV-chain is an MV-chain which is elementarily equivalent to some 
MV-subchain of $\{x\in \RR\mid 0\leq x\leq 1\}$, and a pseudofinite MV-chain is an 
MV-chain which is elementarily equivalent to some ultraproduct of 
finite MV-chains. We use the results of 
\cite{LL 13} on cyclically ordered groups to deduce characterizations of pseudo-simple and of 
pseudofinite MV-chains. Furthermore, we get necessary and sufficient conditions for such 
MV-chains being elementarily equivalent (Theorems \ref{thm511}, \ref{th511a}, \ref{th511}). In 
Section \ref{section6}, we generalize the results of Section \ref{section5} about pseudofinite and pseudo-simple 
MV-chains to pseudo-finite and pseudo-hyperarchimedean MV-algebras which are cartesian products of finitely 
many MV-chain (Theorems \ref{th611}, \ref{thm611}). \\[2mm]
\indent The author would like to thank Daniele Mundici for his bibliographical advice and his suggestions. 
\section{MV-algebras.}\label{section2}
\indent  The reader can find more properties for example in \cite[Chapter 1]{CDM}. 
\subsection{Definitions and basic properties}
\begin{defi}\label{def21} 
An {\it MV-algebra} is a set $A$ equipped with a binary operation $\oplus$, 
a unary operation $\neg$ and a distinguished constant $0$ satisfying the following 
equations. For every $x$, $y$ and $z$: \\ 
MV1) $x\oplus (y\oplus z)=(x\oplus y)\oplus z$\\
MV2) $x\oplus y=y\oplus x$\\
MV3) $x\oplus 0=x$\\
MV4) $\neg\neg x=x$\\
MV5) $x\oplus \neg 0=\neg 0$\\
MV6) $\neg(\neg x\oplus y)\oplus y=\neg (\neg y\oplus x)\oplus x$. 
\end{defi}
\indent  If this holds, then we define $1=\neg 0$,  
$x\odot y=\neg(\neg x\oplus \neg y)$ and $x\leq y \Leftrightarrow 
\exists z,\; x\oplus z=y$. Then $\leq$ is a partial order called the 
{\it natural order} on $A$. This partial order satisfies: $x\leq y\Leftrightarrow 
\neg y\leq \neg x$, and $A$ is a lattice with smallest element $0$, greatest 
element $1$, where $x\vee y=\neg (\neg x\oplus y)\oplus y$, 
$x\wedge y=\neg(\neg(x\oplus \neg y)\oplus \neg y)=(x\oplus \neg y)\odot y$. 
The operations $\wedge$ (infimum) and $\vee$ (supremum) are compatible with $\oplus$ and $\odot$. 
Note that Condition MV6) can be written as $x\vee y=y\vee x$. 
In the case where $\leq$ is a linear order, 
$A$ is called an {\it MV-chain}. \\
\indent Following \cite{CDM}, if $n$ is a positive integer and $x$ is an element of a group, 
then we denote by $nx$ the sum $x+\cdots+x$ ($n$ times). If $x$ belongs to 
an MV-algebra, then we set $n.x=x\oplus\cdots\oplus x$ ($n$ times). Further,  
we will set $x^n=x\odot\cdots\odot x$. \\
\indent If $G$ is a partially ordered group and $u\in G$, $u$ is said to be 
a {\it strong unit} if $u>0$ and for every $x\in G$ there is $n\in \NN$ such that $x\leq nu$. 
It follows that there exists $n'\in \NN$ such that $-x\leq n'u$, hence 
$-n'u\leq x \leq nu$. A {\it unital $\ell$-group} is an $\ell$-group (i.e.\ a lattice-ordered group) 
with distinguished strong unit $u>0$. More generally, a {\it unital} partially (resp.\ linearly) ordered group 
is a partially (resp.\ linearly) ordered group together with a distinguished strong unit. 
\begin{examp} If $(G,u)$ is a unital $\ell$-group, then 
the set $[0,u]:=\{ x\in M\mid 0\leq x\leq u\}$ together with the operations 
$x\oplus y=(x+y)\wedge u$, $\neg 
x=u-x$, and where $0$ is the identity element of $G$, is an MV-algebra, whose natural partial order 
is the restriction of the partial order on $G$. It is denoted $\Gamma(G,u)$. 
In this case, $x\odot y=(x+y-u)\vee 0$, and it follows: $(x\oplus y)+(x\odot y)=x+y$. 
\end{examp}
\indent  
A {\it unital} homomorphism between two unital partially ordered groups $(G,u)$ and $(G',u')$ 
is an increasing group homomorphism $f$ between $G$ and $G'$ 
such that $f(u)=u'$. An $\ell$-homomorphism between two 
$\ell$-groups $G$ and $G'$ is a group-homomorphism such that for every 
$x$, $y$ in $G$ we have that $f(x\wedge y)=f(x)\wedge f(y)$ and $f(x\vee y)=
f(x)\vee f(y)$ (it follows that $f$ is also an increasing homomorphism). \\
\indent  A {\it homomorphism} of MV-algebras is a function $f$ from an MV-algebra $A$ to an 
MV-algebra $A'$ such that $f(0)=0$ and for every $x$, $y$ in $A$ $f(x\oplus y)=
f(x)\oplus f(y)$ and $f(\neg x)=\neg f(x)$. \\
\indent  The mapping $\Gamma: \; (G,u)\mapsto \Gamma(G,u)$ is a full and faithfull 
functor from the category $\mathcal{A}$ of unital $\ell$-groups 
to the category $\mathcal{MV}$ of MV-algebras. If $f$ is a unital $\ell$-homomorphism between $(G,u)$ and 
$(G',u')$, then $\Gamma(f)$ is the restriction of $f$ to $[0,u]$ (\cite[Chapter 7]{CDM}). \\
\indent  Now, for every MV-algebra $A$ there exists 
a unital $\ell$-group $(G_A,u_A)$, 
uniquely defined up to isomorphism, such that $A$ is isomorphic to $[0,u_A]$ 
together with above operations; $(G_A,u_A)$ is called the {\it Chang $\ell$-group} 
of $A$ (see \cite[Chapter 2]{CDM}). We will sometimes let $G_A$ stand for $(G_A,u_A)$. 
For further purposes we describe this Chang $\ell$-group at the end of 
this section. The mapping $\Xi$: $A\mapsto (G_A,u_A)$ 
is a full and faithfull functor from the category $\mathcal{MV}$ to the category $\mathcal{A}$. 
We do not describe here the unital $\ell$-homomorphism 
$\Xi(f)$, where $f$ is a homomorphism of MV-algebras. The composite functors 
$\Gamma\Xi$ and $\Xi\Gamma$ are naturally equivalent to the identities of the respective 
categories (see \cite{CDM} Theorems 7.1.2 and 7.1.7). \\[2mm]
\indent  The language of MV-algebras is $L_{MV}=(0,\oplus,\neg)$. However, since $\leq$, $\wedge$ and 
$\vee$ are definable in $L_{MV}$, we will assume that they belong to the language. 
We will denote by 
$L_o=(0,+,-,\leq)$ the language of ordered groups, by $L_{lo}=(0,+,-,\wedge,\vee)$ the 
language of $\ell$-groups. When dealing with unital $\ell$-groups, 
we will add a constant symbol to the language, it will be denoted by 
$L_{lou}=L_{lo}\cup \{ u\}$, and $L_{loZu}=L_{lo}\cup\{Zu\}$ will denote the language of 
$\ell$-groups together with a unary predicate for the subgroup generated by 
$u$. \\[2mm]
\indent  The Chang $\ell$-group $G_A$ of $A$ is an $L_{lou}$-structure where $u$ is a 
constant predicate interpreted by the distinguished strong unit of $G_A$. 
Now, $G_A$ is also a $L_{loZu}$-structure where $Zu$ is a unary predicate interpreted 
by: $Zu(x)$ if, and only if, $x$ belongs to the subgroup generated by the distinguished strong unit; in this 
case we denote it by $(G_A,\ZZ u_A)$. \\
\indent  In the MV-algebra $A$, recall that $x\odot y$ stands for $\neg(\neg x\oplus \neg y)$. 
In $[0,u_A]\subset G_A$ we have that $x\odot y=(x+y-u_A)\vee 0$. If $A$ is an MV-chain, then  
the formula $x>0$ and $0=x\odot x=\cdots =x^n$ is equivalent to: 
$0<x<2.x<\cdots <n.x\leq u_A$. 
\begin{notas} In the following, if $x<y$ are elements of a partially ordered group $G$, then we 
will set $[x,y]:=\{z\in G\mid x\leq z\leq y\}$, $[x,y[\; :=[x,y]\backslash \{y\}$, 
$]x,y]:=[x,y]\backslash \{x\}$ and $]x,y[\; :=[x,y]\backslash \{x,y\}$. In the particular case 
where $G=\RR$ (the group of real numbers) $x=0$ and $y=1$, then we let 
$[0,1]_{\RR}:=\{z\in \RR\mid 0\leq z\leq 1\}$. The notation $[0,1]$ will be used in the case 
of an MV-algebra.
\end{notas}
\begin{Lemma}\label{lm21} Let $G$ be an $\ell$-group and $0<u\in G$. \\ 
1) Either $[0,u]=\{0,u\}$, \\ 
or there exists $x\in\; ]0,u[$ such that $[0,u]=\{0,u,x\}$\\
or there exists $x\in\; ]0,u[$ such that $[0,u]=\{0,u,x,u-x\}$\\
or for every $x\in\; ]0,u[$ there exists $y\in\; ]0,u[$ such that $x<y$ or $y<x$. \\
2) If $]0,u[$ contains $x,y$ such that $x<y$, then for every $z\in\; ]0,u[$ there exists 
$z'\in\; ]0,u[$ such that $z<z'$ or $z'<z$.
\end{Lemma}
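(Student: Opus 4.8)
The plan is to prove 1) by establishing the contrapositive of its last alternative and to deduce 2) from the same computation. First I would dispose of the case $]0,u[\,=\emptyset$, which gives $[0,u]=\{0,u\}$. So assume $]0,u[\,\neq\emptyset$ and that the fourth alternative fails, i.e.\ there is an element $x_0\in\,]0,u[$ that is incomparable to every other element of $]0,u[$ (no $y\in\,]0,u[$ satisfies $x_0<y$ or $y<x_0$). The goal is to show that then $]0,u[$ reduces to $\{x_0\}$ or to $\{x_0,u-x_0\}$.

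The key step uses the lattice structure. Fix any $y\in\,]0,u[$ with $y\neq x_0$ and form $x_0\wedge y$ and $x_0\vee y$ in $G$. Since $0\le x_0,y\le u$, both lie in $[0,u]$, with $x_0\wedge y\le x_0<u$ and $x_0\vee y\ge x_0>0$. If $x_0\wedge y$ were in $]0,u[$, it would be an element of $]0,u[$ comparable with $x_0$ (namely $\le x_0$, hence $<x_0$ or $=x_0$, each contradicting incomparability with the distinct element $y$); so $x_0\wedge y=0$. Symmetrically $x_0\vee y=u$. Now I would invoke the $\ell$-group identity $(x_0\wedge y)+(x_0\vee y)=x_0+y$, which yields $x_0+y=u$, that is $y=u-x_0$. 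Hence every element of $]0,u[$ distinct from $x_0$ equals $u-x_0$, so $]0,u[\,\subseteq\{x_0,u-x_0\}$. Checking that $u-x_0\in\,]0,u[$ (immediate from $0<x_0<u$) and splitting on whether $u-x_0=x_0$ produces the second alternative ($]0,u[\,=\{x_0\}$) or the third ($]0,u[\,=\{x_0,u-x_0\}$), completing 1).

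For 2), I would read off from this analysis that in each of the first three alternatives the set $]0,u[$ contains no strictly comparable pair: this is clear when $]0,u[$ has at most one element, and in the third alternative the two elements $x_0$ and $u-x_0$ are incomparable because $x_0\wedge(u-x_0)=0\neq x_0$ and $x_0\vee(u-x_0)=u\neq x_0$. Consequently, if $]0,u[$ does contain $x<y$, the first three alternatives are excluded and the fourth must hold, which is exactly the conclusion of 2). Equivalently, and more directly, if some $z\in\,]0,u[$ were incomparable to every other element, the computation above would force $]0,u[\,\subseteq\{z,u-z\}$ with $z$ and $u-z$ incomparable, contradicting the existence of the comparable pair $x<y$.

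The main obstacle I anticipate is the bookkeeping in the key step: one must argue cleanly that the lattice meet and join land in $[0,u]$ and that their membership in the open interval $]0,u[$ is incompatible with the incomparability of $x_0$, and one must correctly apply the identity $(a\wedge b)+(a\vee b)=a+b$ valid in every $\ell$-group. The rest is an exhaustive but routine case split.
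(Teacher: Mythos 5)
Your proof of part 1) is correct, but it takes a genuinely different route from the paper's. You argue the contrapositive: if some $x_0\in\;]0,u[$ has no comparable companion in $]0,u[$, then for any other $y\in\;]0,u[$ the meet and join of $x_0$ and $y$ are forced to the endpoints ($x_0\wedge y=0$, $x_0\vee y=u$, since any intermediate value would be an element of $]0,u[$ comparable with $x_0$), and the modular identity $(x_0\wedge y)+(x_0\vee y)=x_0+y$, valid in every abelian $\ell$-group, yields $y=u-x_0$; hence $]0,u[\;=\{x_0,u-x_0\}$, which collapses to $\{x_0\}$ exactly when $2x_0=u$. The paper instead proves 1) by a direct case analysis: fixing $x$ with $[0,u]\neq\{0,u,x\}$ and $[0,u]\neq\{0,u,x,u-x\}$, it constructs a comparable $y$ explicitly, splitting on $2x=u$ versus $x\neq u-x$ and on whether $x\wedge(u-x)=0$ or $x\vee(u-x)=u$, and using the Riesz-type fact that $x\wedge z=0$ implies $(x+y)\wedge z=y\wedge z$ together with distributivity. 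Your single appeal to $a+b=(a\wedge b)+(a\vee b)$ --- an identity the paper itself invokes later, in the proof of Theorem \ref{prop321} --- replaces that entire case split and is shorter and cleaner; the paper's version has the mild advantage of exhibiting the comparable element concretely in each configuration.

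In part 2), however, your first formulation contains a genuine error: it is not true that the first three alternatives of 1) exclude a strictly comparable pair, because the four alternatives are not mutually exclusive. Take $G=\ZZ$ and $u=3$: then $[0,u]=\{0,u,x,u-x\}$ with $x=1$, yet $1<2$ in $]0,u[$. Your justification $x_0\wedge(u-x_0)=0$ was derived under the standing hypothesis that $x_0$ is incomparable to every other element of $]0,u[$; it does not follow from the mere shape $[0,u]=\{0,u,x,u-x\}$ (in the example, $1\wedge 2=1\neq 0$). So the inference ``if $]0,u[$ contains $x<y$ then the first three alternatives are excluded'' is invalid. Fortunately, your second, ``more direct'' argument is self-contained and correct, and the two are not in fact equivalent: if some $z\in\;]0,u[$ had no comparable companion, your part-1 computation gives $]0,u[\;\subseteq\{z,u-z\}$, and a pair $x<y$ in this set forces $\{x,y\}=\{z,u-z\}$ with $z\neq u-z$, making $z$ comparable to $u-z\in\;]0,u[$ and contradicting the choice of $z$. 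Keep only that version. Note that the paper's own proof of 2) correctly handles the overlap case you tripped on: when $[0,u]=\{0,u,x,u-x\}$ with $x>y=u-x$ it observes the conclusion holds trivially, and otherwise it counts at least five elements in $[0,u]$ and reduces to the per-element statement established in the proof of 1).
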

\begin{proof} 1) Assume that there exists $x\in\; ]0,u[$, 
and that $[0,u]\neq\{0,u,x\}$, $[0,u]\neq\{0,u,x,u-x\}$.\\
\indent  If $x=u-x$ (i.e.\ $2x=u$), then we let $z \in\; ]0,u[\;\backslash\{x\}$. 
By properties of $\ell$-groups, if $x\wedge z=0$, then for every $y>0$: 
$(x+y)\wedge z=y\wedge z$(see for example \cite[Lemma 2.3.4]{Gl 99}). 
Now, $(x+x)\wedge z=u\wedge z=z>0$, hence $x\wedge z>0$. 
If $x\wedge z < x$, then set $y=x\wedge z$. Otherwise $x<z$, hence we set $y=z$. \\
\indent  Now we assume that $x\neq u-x$. If $x<u-x$ or $u-x <x$, then we set $y=u-x$, 
otherwise, we have that $x\wedge (u-x)<x<x\vee (u-x)$. If $x\wedge (u-x) >0$, then we let 
$y=x\wedge (u-x)$. Otherwise, if $x \vee (u-x)\neq u$, then we let $y=x\vee (u-x)$. Assume 
that $x\wedge (u-x)=0$ and $x\vee (u-x)=u$, and let $z \in\; ]0,u[\;\backslash\{x,u-x\}$. 
If $x<z$, then let $y=z$. Otherwise, 
if $z\wedge x>0$, then we set $y=z\wedge x$. If $z\wedge x=0$, then $z=z\wedge u=
z\wedge (x\vee (u-x))=(z\wedge x)\vee (z\wedge (u-x))=z\wedge (u-x)$, hence 
$z<u-x$, and $x<u-z$, we let $y=u-z$. \\ 
\indent  2) Assume that $]0,u[$ contains $x,y$ such that $x>y$ (so it contains at least 
four elements). If $[0,u]=\{0,u,x,u-x\}$, then the result holds trivially ($y=u-x$). 
Otherwise $[0,u]$ contains at least five elements, and the result follows from 1). 
\end{proof}
\begin{Corollary}\label{cor22} Let $A$ be an MV-algebra. Then: \\ 
either $A=\{0,1\}$, \\ 
or there exists $x\in A\backslash \{0,1\}$ such that $A=\{0,1,x\}$\\
or there exists $x\in A\backslash \{0,1\}$ such that $A=\{0,1,x,\neg x\}$\\
or for every $x\in A\backslash \{0,1\}$ there exists $y\in A\backslash \{0,1\}$ 
such that $x<y$ or $y<x$. \\
 If $A\backslash \{0,1\}$ contains $x,y$ such that $x<y$, then for every 
$z\in A\backslash \{0,1\}$ there exists 
$z'\in A\backslash \{0,1\}$ such that $z<z'$ or $z'<z$.
\end{Corollary}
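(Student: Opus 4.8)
The plan is to derive Corollary \ref{cor22} directly from Lemma \ref{lm21} by transporting the statement through the $\Gamma$-correspondence between MV-algebras and unital $\ell$-groups. Recall from the excerpt that every MV-algebra $A$ is isomorphic to $\Gamma(G_A,u_A)=[0,u_A]$ for its Chang $\ell$-group $(G_A,u_A)$, that the natural order on $A$ coincides with the restriction of the order on $G_A$, and that $\neg x=u_A-x$. Under this identification the element $0$ of $A$ is the identity of $G_A$, the element $1=\neg 0$ is $u_A$, and $A\setminus\{0,1\}$ corresponds exactly to the open interval $]0,u_A[$. So the four mutually exclusive cases of the Corollary are precisely the four cases of Lemma \ref{lm21}.1 with $u=u_A$, and the final sentence is exactly Lemma \ref{lm21}.2.

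Concretely, I would open the proof by fixing an isomorphism of $A$ onto $[0,u_A]\subset G_A$ (or simply invoking that $A=\Gamma(G_A,u_A)$ up to isomorphism), and noting that the isomorphism is order-preserving in both directions, sends $0\mapsto 0$, $1\mapsto u_A$, and intertwines $\neg$ with $x\mapsto u_A-x$. Consequently $A\setminus\{0,1\}$ is carried bijectively and order-isomorphically onto $]0,u_A[$, and an element $\neg x$ of $A$ corresponds to $u_A-x$. I would then apply Lemma \ref{lm21}.1: its first alternative $[0,u_A]=\{0,u_A\}$ pulls back to $A=\{0,1\}$; the second, $[0,u_A]=\{0,u_A,x\}$, pulls back to $A=\{0,1,x\}$; the third, $[0,u_A]=\{0,u_A,x,u_A-x\}$, pulls back to $A=\{0,1,x,\neg x\}$ since $u_A-x$ corresponds to $\neg x$; and the fourth alternative (every $x\in\,]0,u_A[$ is comparable to some other element of $]0,u_A[$) pulls back verbatim to the fourth case of the Corollary. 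The closing sentence follows identically from Lemma \ref{lm21}.2.

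Since the entire argument is a translation along a known order-isomorphism, there is essentially no obstacle; the only point requiring a small amount of care is verifying that the third case transports correctly, i.e.\ that the pair $\{x,u_A-x\}$ in the $\ell$-group corresponds to the pair $\{x,\neg x\}$ in $A$. This is immediate from $\neg x=u_A-x$, but it is the one place where the unary operation $\neg$ (rather than merely the order) is used, so I would state it explicitly. Everything else is a mechanical replacement of $0,u_A,]0,u_A[$ by $0,1,A\setminus\{0,1\}$.

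\begin{proof}
By the results recalled above, $A$ is isomorphic to the MV-algebra $\Gamma(G_A,u_A)=[0,u_A]$, where $(G_A,u_A)$ is the Chang $\ell$-group of $A$. We identify $A$ with $[0,u_A]$ via this isomorphism. Under this identification the natural order on $A$ is the restriction of the order on $G_A$, the element $0$ of $A$ is the identity of $G_A$, the element $1$ is $u_A$, and for every $x\in A$ we have $\neg x=u_A-x$. In particular $A\backslash\{0,1\}$ is identified with $]0,u_A[$, and for $x\in\,]0,u_A[$ the element $\neg x$ is $u_A-x$.

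We now apply Lemma \ref{lm21} with $u=u_A$. The first alternative of part 1), namely $[0,u_A]=\{0,u_A\}$, gives $A=\{0,1\}$. The second alternative, $[0,u_A]=\{0,u_A,x\}$ for some $x\in\,]0,u_A[$, gives $A=\{0,1,x\}$ with $x\in A\backslash\{0,1\}$. The third alternative, $[0,u_A]=\{0,u_A,x,u_A-x\}$ for some $x\in\,]0,u_A[$, gives, since $u_A-x=\neg x$, that $A=\{0,1,x,\neg x\}$ with $x\in A\backslash\{0,1\}$. The fourth alternative states that for every $x\in\,]0,u_A[$ there is $y\in\,]0,u_A[$ with $x<y$ or $y<x$, which translates to: for every $x\in A\backslash\{0,1\}$ there is $y\in A\backslash\{0,1\}$ with $x<y$ or $y<x$.

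Finally, the last assertion follows from Lemma \ref{lm21} part 2): if $A\backslash\{0,1\}$ contains $x,y$ with $x<y$, then $]0,u_A[$ contains such a pair, so by part 2) every $z\in\,]0,u_A[$ admits $z'\in\,]0,u_A[$ with $z<z'$ or $z'<z$; translating back, for every $z\in A\backslash\{0,1\}$ there is $z'\in A\backslash\{0,1\}$ with $z<z'$ or $z'<z$.
\end{proof}
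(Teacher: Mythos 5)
Your proof is correct and follows exactly the route the paper intends: the Corollary is stated without proof precisely because it is the verbatim translation of Lemma \ref{lm21} through the identification $A \simeq \Gamma(G_A,u_A)=[0,u_A]$ with $1=u_A$ and $\neg x = u_A - x$, which is what you carry out. Your explicit attention to the third case (checking that $\{x, u_A-x\}$ transports to $\{x,\neg x\}$) is a sensible addition, and nothing is missing.
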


\subsection{Construction of the Chang $\ell$-group.}
The correspondence between MV-algebras and partially cyclically ordered groups relies on the 
good sequences defined for the construction of the Chang $\ell$-groups (see \cite{Mu 86}, 
\cite[Chapter 2]{CDM}). 
We describe this construction and we also define an analogue of the good sequences in an $\ell$-group. 
We start with some properties of partially ordered groups. 
\begin{Remark}\label{rk26} 
We know that every cancellative abelian monoid 
$M$ embeds canonically in a group $G$ generated by the image of $M$ 
following the construction of $\ZZ$ from $\NN$. Now, one can deduce from 
properties of partially ordered groups (see for example \cite{BKW}, Propositions 1.1.2, 1.1.3, and also 
1.2.5) that: \\
(i) $M$ is the 
positive cone of a compatible partial order on $G$ if, and only if, for every $x$, $y$ in $M$, $x+y=0
\Rightarrow x=y=0$, and this partial order is given by $x\leq y\Leftrightarrow \exists z\in M,\; y=x+z$, \\ 
(ii) $G$ is an $\ell$-group if, and only if, for every $x$, $y$ in $M$, $x\wedge y$ exists. 
\end{Remark}
\indent  The following lemmas show that every element of the positive cone of 
a unital $\ell$-group can be associated with a unique 
sequence of elements of $[0,u]$. So $G$ is determined by its restriction to 
$[0,u]$. This property will give rise to the construction of the Chang $\ell$-group. 
\begin{Lemma}\label{lm27} 
Assume that $(G,u)$ is a unital $\ell$-group.  
Let $0< x \in G$ and $m$ be a positive integer such that $x\leq mu$. 
Then, there exists a unique 
sequence $x_1,\dots,x_n$ of elements of $[0,u]$ such that $x=x_1+\cdots+x_n$ and, 
for $1\leq i<n-1$, $(u-x_i)\wedge(x_{i+1}+\cdots+x_n)=0$, and $n\leq m$.
\end{Lemma}
\begin{proof} For 
every $y\in G$, we have that $(u-y)\wedge(x-y)=0\Leftrightarrow (u\wedge x)-y=0\Leftrightarrow 
y=u\wedge x$. Set $x_1=x\wedge u$. Then $x_1$ is the unique element of $G$ 
such that $(u-x_1)\wedge (x-x_1)=0$. Since $0< x$, we have that $0\leq x_1\leq u$, and 
$0\leq x-x_1=x-(u\wedge x)=x+((-u)\vee (-x))=(x-u)\vee 0\leq (mu-u)\vee 0=(m-1)u$. 
By taking $x-x_1$ in place of $x$ we get $x_2\in [0,u]$ such that $(u-x_2)\wedge 
(x-x_1-x_2)=0$, and we have that $x-x_1-x_2\in [0,(m-2)u]$, and so on. Hence there exists a unique 
sequence $x_1,\dots,x_n$ of elements of $[0,u]$ such that $x=x_1+\cdots+x_n$ and, 
for $1\leq i<n-1$, $(u-x_i)\wedge(x_{i+1}+\cdots+x_n)=0$. 
\end{proof}
\begin{Lemma}\label{lem28} The condition: 
for $1\leq i<n-1$, $(u-x_i)\wedge(x_{i+1}+\cdots+x_n)=0$ is equivalent to: 
for $1\leq i<n-1$, $(u-x_i)\wedge x_{i+1}=0$. If this holds, then, for $1\leq i<j\leq n$, 
$(u-x_i)\wedge x_j=0$
\end{Lemma}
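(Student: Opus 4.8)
The plan is to prove the easy implication by monotonicity of the meet, then establish the final (``moreover'') clause through a decreasing-sequence argument, and finally recover the full sum condition from the pairwise one using the standard additivity of disjointness in $\ell$-groups.

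First I would record that every $x_k$ lies in $[0,u]$, so $x_k\ge 0$ and $u-x_k\ge 0$. The implication from the sum condition to the single-term condition is immediate: since $x_{i+1}\le x_{i+1}+\cdots+x_n$ (the omitted terms being nonnegative) and the meet is monotone in each argument, we get $0\le (u-x_i)\wedge x_{i+1}\le (u-x_i)\wedge(x_{i+1}+\cdots+x_n)=0$.

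For the converse, the crux is a pointwise disjointness lemma: if $a,b\in[0,u]$ satisfy $(u-a)\wedge b=0$, then $b\le a$. To see this I would set $c=(b-a)\vee 0\ge 0$. On one hand $c\le b$; on the other hand $c=(b-a)\vee 0\le (u-a)\vee 0=u-a$, using $b\le u$ and $a\le u$. Hence $0\le c\le (u-a)\wedge b=0$, so $c=0$, i.e.\ $b\le a$. Applying this with $a=x_i$, $b=x_{i+1}$ shows the sequence is weakly decreasing, $x_1\ge x_2\ge\cdots\ge x_n$. Consequently, for $i<j$ we have $x_j\le x_{i+1}$, so by monotonicity $(u-x_i)\wedge x_j\le (u-x_i)\wedge x_{i+1}=0$, which gives $(u-x_i)\wedge x_j=0$ and proves the final assertion.

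It then remains to combine these pairwise identities into the single sum condition. Here I would invoke the $\ell$-group fact already used in the proof of Lemma \ref{lm21}: if $z\wedge x=0$ then $(x+y)\wedge z=y\wedge z$ for every $y>0$. Taking $z=u-x_i$ and peeling off the terms one at a time from $x_{i+1}+\cdots+x_n$ --- each term $x_j$ satisfying $z\wedge x_j=0$ by the previous step --- reduces $(u-x_i)\wedge(x_{i+1}+\cdots+x_n)$ successively to $(u-x_i)\wedge x_{i+1}=0$, as desired. The routine parts are the two monotonicity arguments and the inductive peeling; the one genuinely load-bearing step is the pointwise lemma $(u-a)\wedge b=0\Rightarrow b\le a$, whose proof hinges on the estimate $c=(b-a)\vee 0\le u-a$. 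I expect this to be the only place where the hypothesis $a,b\in[0,u]$ (rather than mere positivity) is essential, and it is what forces the good sequence to be decreasing.
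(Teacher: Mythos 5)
Your proof is correct, but the substantive direction (pairwise $\Rightarrow$ sum) takes a genuinely different route from the paper's. The paper proves a transitivity lemma: for $y,z,z'\in[0,u]$, if $(u-y)\wedge z=(u-z)\wedge z'=0$ then $(u-y)\wedge z'=0$, obtained by writing $u=z+(u-z)$ inside the meet and using subadditivity of $\wedge$ over sums of positive elements, $a\wedge(b+c)\le (a\wedge b)+(a\wedge c)$; iterating this gives the disjointness $(u-x_i)\wedge x_j=0$ for $i<j$ and then, by induction, the sum condition. You instead prove the stronger pointwise fact that $(u-a)\wedge b=0$ with $a,b\in[0,u]$ forces $b\le a$ — your estimate $0\le (b-a)\vee 0\le (u-a)\wedge b$ is sound — i.e.\ that the sequence is decreasing; the pairwise disjointnesses then follow by pure monotonicity, and the sum condition by peeling with the identity $x\wedge z=0\Rightarrow (x+y)\wedge z=y\wedge z$, which the paper itself invokes (from Glass) in the proof of Lemma \ref{lm21}. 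Your route buys an extra structural fact the paper never makes explicit ($x_1\ge x_2\ge\cdots\ge x_n$, the standard monotonicity of good sequences), and it lets you derive the final clause from the pairwise condition rather than, as the paper does, from the sum condition; the paper's argument is marginally more self-contained, needing only subadditivity of the meet. One shared caveat: both your proof and the paper's implicitly read the hypothesis as holding for all $1\le i\le n-1$; with the stated range $i<n-1$ the pair $(x_{n-1},x_n)$ is uncovered, and the case $j=n$ of the final clause (e.g.\ $(u-x_{n-1})\wedge x_n=0$) would not follow — an off-by-one inherited from Lemma \ref{lm27}.
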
 
\begin{proof} Assume that for 
$1\leq i<n-1$, $(u-x_i)\wedge(x_{i+1}+\cdots+x_n)=0$. Let $i<j\leq n$. Since $0\leq x_i$ and 
$0\leq x_j\leq x_{i+1}+\cdots+x_n$, it follows that $(u-x_i)\wedge x_j=0$. 
Now, let $y$, $z$, $z'$ in $[0,u]$ such that $(u-y)\wedge z=(u-z)\wedge z'=0$, then 
$0\leq (u-y)\wedge z' =(u-y)\wedge z'\wedge u=(u-y)\wedge z'\wedge (z+u-z)\leq 
((u-y)\wedge z'\wedge z)+((u-y)\wedge z' \wedge (u-z))=0$. Hence 
by induction we can prove that the condition: 
for $1\leq i<n-1$, $(u-x_i)\wedge x_{i+1}=0$ implies 
for $1\leq i<n-1$, $(u-x_i)\wedge(x_{i+1}+\cdots+x_n)=0$. 
\end{proof}
\begin{Remark}\label{rk210} 
By setting, for $x$, $y$ in $[0,u]$, $x\oplus y=(x+y)\wedge u$, we 
have that $x\odot y =0\vee (x+y-u)$. Hence, using the fact that for every $z$ in $G$ 
we have that $z=z\vee 0+z\wedge 0$, we get: $x+y=(x\oplus y)+ (x\odot y)$. 
We deduce from the proof of Lemma \ref{lm27} that 
$x\oplus y$ is the unique element of $[0,u]$ such that 
$(u-x\oplus y)\wedge (x+y-x\oplus y) =0$, and then $x+y-x\oplus y\in [0,u]$. It follows that 
$x=x\oplus y \Leftrightarrow (u-x)\wedge y=0$. Furthermore, from the equality 
$x+y=(x\oplus y)+(x\odot y)$ we get $x\oplus y=x\Leftrightarrow x\odot y=y$. \end{Remark}
\indent  Now, we come to the Chang $\ell$-group. 
\begin{defi} Let $A$ be an MV-algebra. 
A sequence $(x_i)$ of elements of $A$ indexed by the natural numbers $1,\; 2,\dots$ 
is said to be a {\it good sequence} if, for each $i$, $x_i\oplus x_{i+1}=x_i$, and it contains only 
a finite number of nonzero terms. If $x=(x_i)$ and $y=(y_i)$ are good sequences, then we define 
$z=x+y$ by the rules $z_1=x_1\oplus y_1$, $z_2=x_2\oplus (x_1\odot y_1)\oplus y_2$, and more generally, 
for every positive integer $i$: 
$$z_i=x_i\oplus(x_{i-1}\odot y_1)\oplus \cdots \oplus (x_1\odot y_{i-1})\oplus y_i.$$
We also define a partial order $\leq$ by $x\leq y\Leftrightarrow \exists z,\; y=x+z$. 
\end{defi}
\indent  We see that $A$ embeds into 
the monoid $M_A$ of good sequences by $x\mapsto (x,0,0,\dots)$, and 
one can prove that $M_A$ is cancellative, it satisfies the properties (i) and (ii) of Remark \ref{rk26}, 
where: \\
\indent  $x\wedge y=(x_i\wedge y_i)$, $x\vee y=(x_i\vee y_i)$, \\
\indent  if $y=x+z$, then $z=(y_i)+(\neg x_n,\neg x_{n-1}, 
\dots, \neg x_1,0,\dots)$, where $x_n$ is the last non-zero term of $(x_i)$ 
(see \cite[Chapter 2]{CDM}). \\
\indent  Consequently, $M_A$ defines in a unique way an $\ell$-group, and the image of 
$(1,0,\dots)$ in this $\ell$-group is a strong unit. This unital $\ell$-group is 
the Chang $\ell$-group $G_A$. 
\begin{Remark}\label{rk212} Let $x=(x_1,\dots,x_n,0,\dots)$ in the positive cone of $G_A$. 
Then $x=(x_1,0,\dots)+\cdots+(x_n,0,\dots)$. The embedding $x_i\mapsto (x_i,0,\dots)$ of 
$A$ in $G_A$ can be considered as an inclusion. Hence we can assume that $x_i\in G_A$ and write $x=x_1+\cdots+x_n$. 
Hence, the good sequence defining $x$ is the same as the sequence defined in Lemmas 
\ref{lm27} and \ref{lem28}. 
\end{Remark}
\begin{proof} Let $(x_i)$ be a good sequence, and for $k\geq 1$ let 
$y=(x_1,\dots,x_k,0,\dots)+(x_{k+1},0,\dots)$. By Lemma \ref{lem28} and Remark \ref{rk210}, we have, 
for $1\leq i<j\leq k+1$, $x_i\oplus x_j=x_i$ and $x_i\odot x_j=x_j$. It follows that \\
\indent  $y_1=x_1\oplus x_{k+1}=x_1$, \\
\indent  for $2\leq i\leq n$, $y_i=x_i\oplus(x_{i-1}\odot x_{k+1}1)\oplus \cdots \oplus (x_1\odot 0)\oplus 0=
x_i\oplus x_{k+1}=x_i$, \\
\indent  $y_{k+1}=0\oplus(x_n\odot x_{k+1})\oplus \cdots \oplus (x_1\odot 0)\oplus 0=0\oplus x_{k+1}\oplus 0=
x_{k+1}$, \\
and for $i>k+1$, $y_i=0$. \\ 
\indent  So by induction we get $(x_1,\dots,x_n,0,\dots)=(x_1,0\dots)+\cdots+(x_n,0,\dots)$. The remainder of 
the proof is straightforward. 
\end{proof}
\subsection{Elementary equivalence, interpretability.}\label{subsec21}
Two structures $S$ and $S'$ for a language $L$ are {\it elementarily 
equivalent} if any $L$-sentence is true in $S$ if, and only if, it is true in $S'$. We let 
$S\equiv S'$ stand for $S$ and $S'$ being elementarily equivalent. 
Furthermore if $S\subset S'$, then we say that $S$ is an {\it elementary substructure} of 
$S'$ (in short $S\prec S'$) 
if every existential formula with parameters in $S$ which is true in $S'$ is also true 
in $S$.
We will need the following properties. 
\begin{Theorem}\label{th26}
(\cite[Corollary 9.6.5 on p.\ 462]{Hod}, see also \cite[Theorems 5.1, 5.2]{FV 59}) 
Let $L$ be a first-order language. \\
(a) If $I$ is a non-empty set and for each $i\in I$, $A_i$ and $B_i$ are elementarily equivalent 
$L$-structures, then $\prod_IA_i\equiv \prod_I B_i$ (here $\prod$ denotes the direct product). \\
(b) If $I$ is a non-empty set and for each $i\in I$, $A_i$ and $B_i$ are L-structures with 
$A_i\prec B_i$, then $\prod_IA_i\prec \prod_I B_i$. \end{Theorem}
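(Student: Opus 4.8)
The plan is to deduce both statements from the Feferman--Vaught reduction theorem for direct products, which is the engine behind the cited results. For every $L$-formula $\varphi(x_1,\dots,x_n)$ one produces, by induction on $\varphi$, a finite list of $L$-formulas $\psi_1(\bar x),\dots,\psi_k(\bar x)$ together with a fixed Boolean recipe $\Theta$ (a sentence in the language of the Boolean algebra of subsets of the index set) such that, for every family $(M_i)_{i\in I}$ and every tuple $\bar a$ from $\prod_I M_i$,
\[
\prod_I M_i\models\varphi(\bar a)\ \Longleftrightarrow\ \mathcal P(I)\models\Theta\bigl(S_1,\dots,S_k\bigr),\qquad S_j=\{\,i\in I\mid M_i\models\psi_j(\bar a(i))\,\},
\]
where $\bar a(i)$ denotes the $i$-th coordinate of $\bar a$. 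The crucial point is that the formulas $\psi_j$ and the recipe $\Theta$ depend only on $\varphi$ and on $L$, not on the family $(M_i)$ nor on the index set $I$. Thus the truth value of $\varphi$ in a product is determined by the ``spectrum'' $(S_1,\dots,S_k)$ of its factors, read off through one fixed combinatorial rule.

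Granting this, both parts are immediate. For (a), apply the reduction to an arbitrary $L$-sentence $\sigma$: here the $\psi_j$ are sentences, so the spectrum for $(A_i)$ is $S_j=\{i\mid A_i\models\psi_j\}$ and that for $(B_i)$ is $S_j'=\{i\mid B_i\models\psi_j\}$. Since $A_i\equiv B_i$ for every $i$, each sentence $\psi_j$ holds in $A_i$ exactly when it holds in $B_i$, whence $S_j=S_j'$ for all $j$; the recipe $\Theta$ therefore yields the same truth value in $\mathcal P(I)$ in both cases, so $\prod_I A_i\models\sigma$ iff $\prod_I B_i\models\sigma$, and as $\sigma$ was arbitrary, $\prod_I A_i\equiv\prod_I B_i$. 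For (b), first note that since each $A_i$ is a substructure of $B_i$ and all operations and relations are interpreted coordinatewise, $\prod_I A_i$ is a substructure of $\prod_I B_i$. Fix an $L$-formula $\varphi(\bar x)$ and a tuple $\bar a$ from $\prod_I A_i$; then each coordinate $\bar a(i)$ is a tuple of parameters from $A_i$. Because $A_i\prec B_i$ (equivalently, by the Tarski--Vaught theorem, every $L$-formula with parameters from $A_i$ has the same truth value in $A_i$ and in $B_i$), we get $A_i\models\psi_j(\bar a(i))$ iff $B_i\models\psi_j(\bar a(i))$ for every $i$ and every $j$, so the spectra computed in the two products coincide and $\Theta$ returns the same value. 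Hence $\prod_I A_i\models\varphi(\bar a)$ iff $\prod_I B_i\models\varphi(\bar a)$ for all $\varphi$ and all $\bar a$ from the smaller product; in particular the existential (Tarski--Vaught) condition used to define $\prec$ in the excerpt holds, giving $\prod_I A_i\prec\prod_I B_i$.

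The genuine work, and the main obstacle, is establishing the reduction theorem itself; this is where one actually invokes \cite{Hod} or \cite{FV 59}. The induction on $\varphi$ is routine for atomic formulas and for the Boolean connectives (one merely closes the list of $\psi_j$ under the connectives and adjusts $\Theta$), but the existential quantifier step is delicate: witnessing $\exists y\,\vartheta(\bar a,y)$ in the product amounts to choosing a coordinate $y(i)$ in each factor, so one must track, for each relevant local type of bounded quantifier rank, the set of indices at which that type is realized, and encode the resulting case distinction as a monadic/Boolean statement about a partition of $I$. Controlling the finiteness of the list of types and the bookkeeping of the partition is the technical heart of Feferman--Vaught; once it is in hand, the coordinatewise preservation used in (a) and (b) is purely formal. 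An alternative, game-theoretic route would run Ehrenfeucht--Fra\"{\i}ss\'e games, assembling a coordinatewise strategy on the products from winning strategies for Duplicator on each pair $(A_i,B_i)$, but making the round counts match again forces one to pass to the same spectrum/type data, so it offers no real shortcut.
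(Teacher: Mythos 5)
Your proposal is correct and follows exactly the route of the sources the paper cites: the paper itself gives no proof of Theorem \ref{th26}, quoting it from Hodges (Corollary 9.6.5) and Feferman--Vaught, and those references establish it precisely via the reduction theorem you describe, with (a) and (b) following formally from the coincidence of the spectra $S_j$ once $A_i\equiv B_i$ (resp.\ $A_i\prec B_i$) for each $i$. Your identification of the quantifier step of the reduction theorem as the genuine content, and your remark that the paper's existential-formula definition of $\prec$ is recovered via the Tarski--Vaught test, are both accurate.
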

If, for every $i\in I$, $S_i$ is an $L$-structure and $U$ is an ultrafilter on $I$, then the 
{\it ultraproduct} of the $S_i$'s is the quotient set 
$\displaystyle{\left(\prod_{i\in I} S_i\right)/\sim}$, 
where $\sim$ is the equivalence relation: $(x_i)\sim (y_i) \Leftrightarrow 
\{i\in I\mid x_i=y_i \}\in U$. If $R$ is a unary predicate of $L$, then $R((x_i))$ holds in 
the ultraproduct if the set $\{i\in I\mid R(x_i) \mbox{ holds in } S_i \}$ belongs to $U$. 
Every relation symbol and every function symbol is interpreted in the same way. An {\it elementary 
class} is a class which is closed under ultraproducts. \\ 
\indent  A structure $S_1$ for a language $L_1$ is interpretable in 
a structure $S_2$ for a language $L_2$ if the following holds. \\ 
\indent  There is a one-to-one mapping $\varphi$ from a subset $T_1$ of $S_2$ onto $S_1$, \\
\indent  for every $L_1$-formula $\Phi$ of the form 
$R(\bar{x})$, $F(\bar{x})=y$, $x=y$ or $x=c$ (where $R$ is a relation symbol, $F$ is a 
function symbol and $c$ is a constant), there is an $L_2$-formula $\Phi'$ such that for every 
$\bar{x}$ in $T_1$, $S_1\models \Phi(\varphi(\bar{x})) \Leftrightarrow 
S_2\models \Phi'(\bar{x})$, \\ 
\indent  (this is a particular case of the definition of interpretability p.~58 and 
pp.~212-214 in \cite{Hod}). 
\begin{Theorem}\label{th27} (Reduction Theorem 5.3.2, \cite{Hod}). If $S_1$, $S_1'$ (resp.\ 
$S_2$, $S_2'$) are structures for the language $L_1$ (resp.\ $L_2$) such that 
$S_1$ is interpretable in $S_2$ and $S_1'$ is interpretable in $S_2'$ by the same rules, 
then $S_2 \equiv S_2'\Rightarrow S_1\equiv S_1'$ and 
$S_2 \prec S_2'\Rightarrow S_1\prec S_1'$. 
\end{Theorem}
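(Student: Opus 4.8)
The plan is to reduce the statement to a single \textbf{translation lemma} asserting that the interpretation transports truth of \emph{arbitrary} $L_1$-formulas, not merely of the atomic ones covered by the definition. First I would extend the assignment $\Phi\mapsto\Phi'$ from atomic $L_1$-formulas to all $L_1$-formulas by recursion on syntactic complexity: set $(\neg\Phi)'=\neg\Phi'$, $(\Phi\wedge\Psi)'=\Phi'\wedge\Psi'$ (and similarly for the other connectives), and relativize quantifiers to the domain of the interpretation by $(\exists y\,\Phi)'=\exists y\,(\delta(y)\wedge\Phi')$ and $(\forall y\,\Phi)'=\forall y\,(\delta(y)\rightarrow\Phi')$, where $\delta(y)$ is the $L_2$-formula defining the subset $T_1\subseteq S_2$. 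The standard notion of interpretation referenced from \cite{Hod} provides such a $\delta$, together with the $L_2$-formula translating the atomic equality $x=y$ (so that $\varphi$ being well defined and injective is itself expressible). Thus every $L_1$-formula acquires a canonical $L_2$-translation, and because the two interpretations use \emph{the same rules}, a single translation $\Phi\mapsto\Phi'$ serves simultaneously for the pair $(S_1,S_2)$ and for $(S_1',S_2')$.

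Next I would prove the \textbf{reduction lemma}: for every $L_1$-formula $\Phi(x_1,\dots,x_k)$ and every tuple $\bar a=(a_1,\dots,a_k)$ in $T_1$,
\[
S_1\models\Phi(\varphi(a_1),\dots,\varphi(a_k))\Leftrightarrow S_2\models\Phi'(a_1,\dots,a_k).
\]
The argument is an induction on the complexity of $\Phi$. The base case is exactly the defining property of the interpretation for atomic formulas, and the Boolean steps are immediate from the inductive translation. The only delicate point is the quantifier step: to evaluate $\exists y\,\Phi$ in $S_1$ one ranges over all of $S_1$, whereas the translated formula ranges over the elements of $S_2$ satisfying $\delta$; the relativization to $\delta$ is precisely what matches these two ranges, since $\varphi$ is a bijection from $T_1=\{b\in S_2\mid S_2\models\delta(b)\}$ onto $S_1$.

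From the reduction lemma the elementary equivalence claim follows at once. For an $L_1$-sentence $\sigma$ one has $S_1\models\sigma\Leftrightarrow S_2\models\sigma'$ and $S_1'\models\sigma\Leftrightarrow S_2'\models\sigma'$ (the \emph{same} $\sigma'$, by the same rules). If $S_2\equiv S_2'$ then $S_2\models\sigma'\Leftrightarrow S_2'\models\sigma'$, and chaining the three equivalences yields $S_1\models\sigma\Leftrightarrow S_1'\models\sigma$; as $\sigma$ was arbitrary, $S_1\equiv S_1'$.

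For the elementary-substructure claim I would use the standard characterization of $\prec$ as agreement on all formulas with parameters (the Tarski--Vaught test), rather than only existential ones, since the translation of an existential $L_1$-formula need not remain existential once quantifiers are relativized to $\delta$. First I record that $S_2\prec S_2'$ forces $S_1\subseteq S_1'$: the domains $T_1\subseteq S_2$ and $T_1'\subseteq S_2'$ are cut out by the same formula $\delta$, so $S_2\prec S_2'$ gives $T_1=T_1'\cap S_2$, and since the coordinate map is defined by the same rules, $\varphi$ and $\varphi'$ agree on $T_1$, whence $S_1=\varphi(T_1)\subseteq\varphi'(T_1')=S_1'$. Then for any $L_1$-formula $\Phi$ and parameters $\bar a\in S_1$, writing $\bar a=\varphi(\bar b)$ with $\bar b\in T_1\subseteq S_2$, the reduction lemma (applied to both pairs) and $S_2\prec S_2'$ give
\[
S_1\models\Phi(\bar a)\Leftrightarrow S_2\models\Phi'(\bar b)\Leftrightarrow S_2'\models\Phi'(\bar b)\Leftrightarrow S_1'\models\Phi(\bar a),
\]
so $S_1$ and $S_1'$ agree on every formula with parameters in $S_1$, which is $S_1\prec S_1'$. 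The main obstacle throughout is the quantifier-and-substructure bookkeeping: one must ensure that relativization to $\delta$ is legitimate and that the two interpretations really induce compatible inclusions $T_1\subseteq T_1'$ and $S_1\subseteq S_1'$. Granting the fully definable notion of interpretation from \cite{Hod}, the remaining verifications are routine inductions.
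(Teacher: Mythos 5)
Your proof is correct, and there is nothing in the paper to diverge from: the paper does not prove this statement at all, it simply imports it as Reduction Theorem 5.3.2 from \cite{Hod}. Your translation-lemma-plus-induction argument is precisely the canonical proof underlying that citation, so in substance you have reconstructed the source's argument rather than found an alternative route.

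Two of your side remarks deserve emphasis because they quietly repair imprecisions in the paper's own definitions. First, the paper's stated notion of interpretability only demands translations of atomic formulas and never requires the domain $T_1$ to be $L_2$-definable; without a domain formula $\delta$ the quantifier step of your induction has no relativization to work with, so you are right that one must invoke the fuller notion from \cite{Hod} (as the paper implicitly does by calling its definition a special case of pp.~58, 212--214 there). Second, the paper defines $S\prec S'$ via transfer of \emph{existential} formulas only, i.e.\ existential closedness; as you correctly observe, the relativized translation of an existential $L_1$-formula need not be existential in $L_2$ (both $\delta$ and the atomic translations may have arbitrary quantifier complexity), so the theorem's $\prec$ clause would not follow under that literal reading. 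Your decision to work with the standard notion of elementary substructure (agreement on all formulas with parameters) is the right one and matches what Hodges actually proves; your derivation of $T_1=T_1'\cap S_2$ from $S_2\prec S_2'$ via the shared formula $\delta$, giving the compatible inclusion $S_1\subseteq S_1'$, is the correct bookkeeping for that clause, granting the identification of $S_1$ with the image structure $\varphi(T_1)$, which is how the theorem is applied throughout the paper.
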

Since $A=[0,u]$, $a\oplus b=(a+b)\wedge u$, $\neg 
a=u-a$, it follows that the $L_{MV}$-structure $A$ is interpretable in the $L_{lou}$-structure $(G_A,u_A)$ 
and in the $L_{loZu}$-structure $(G_A,\ZZ u_A)$. 
Consequently, if 
$A$, $A'$ are MV-algebras such that $(G_A,u_A)\equiv (G_{A'},u_{A'})$ (resp.\ 
$(G_A,\ZZ u_A)\equiv (G_{A'},\ZZ u_{A'})$), then $A\equiv A'$. The same holds with $\prec$ instead 
of $\equiv$. 
\section{Partially cyclically ordered groups.}\label{section3} 
\indent  Recall that all the groups are assumed to be abelian groups. 
\subsection{Basic properties.}
\begin{defis}\label{def31}
 We say that a group $C$ is 
{\it partially cyclically ordered} (in short a {\it p.c.o.\ group}) if it is equipped with 
a ternary relation $R$ which satisfies (1), (2), (3), (4) below. \\ 
(1) $R$ is strict i.e.\ for every $x$, $y$, $z$ in $C$: 
$R(x,y,z) \Rightarrow x \neq y \neq z \neq  x$.\\ 
(2) $R$ is cyclic i.e.\ for every $x$, $y$, $z$ in $C$:
$R(x,y,z) \Rightarrow R(y,z,x)$.\\ 
(3) For every $x$, $y$, $z$ in $C$ set $y\leq_x z$ if either 
$R(x,y,z)$ or $y=z$ or $y=x$. Then for every  $x$ in $C$, $\leq_x$ is a partial order 
relation on $C $. 
We set $y<_x z$ for $y\leq_x z$ and $y\neq z$. If 
$y$ and $z$ admit an infimum (resp.\ a supremum) in $(C,<_x)$, then it will be denoted 
by $y\wedge_xz$ (resp.\ $y\vee_xz$). \\
(4) $R$ is compatible, i.e.\ for every $x$, $y$, $z$, $v$ in $C$, $R(x,y,z) 
\Rightarrow R(x+v,y+v,z+v)$. \\ 
If for every $x\in C$ the order $\leq_x$ is a linear order, then we say that 
$C$ is a {\it cyclically ordered group} (in short a {\it c.o.\ group}). 
\end{defis}
\begin{defi}{\bf Notation.} The language $(0,+,-,R)$ of p.c.o.\ groups 
will be denoted by $L_c$. 
\end{defi}
\begin{defi} A {\it c-homomorphism} is a group homomorphism $f$ between two 
p.c.o.\ groups (or c.o.\ groups) such that for every $x,y,z$, if $R(x,y,z)$ holds and 
$f(x)\neq f(y)\neq f(z)\neq f(x)$, then $R(f(x),f(y),f(z))$ holds. 
\end{defi}
\begin{examps} Let $\UU$ be the multiplicative group of unimodular complex numbers. 
For $e^{i\theta_j}$ ($1\leq j\leq 3$) in $\UU$, such that $0\leq \theta_j<2\pi$, we let 
$R(e^{i\theta_1},e^{i\theta_2},e^{i\theta_3})$ if, and only if, either $\theta_1<\theta_2<
\theta_3$ or $\theta_2<\theta_3<\theta_1$ or $\theta_3<\theta_1<\theta_2$ (in 
other words, when one traverses the unit circle counterclockwise, sarting from 
$e^{i\theta_1}$ one finds first $e^{i\theta_2}$ then $e^{i\theta_3}$). Then $\UU$ is 
a c.o.\ group. One sees that the group $Tor(\UU)$ of torsion elements of $\UU$ 
(that is, the roots of $1$ in the field of complex numbers) is a c.o.\ subgroup. 
Now, let $(C_1,R_1)$ and $(C_2,R_2)$ be nontrivial c.o.\ groups and 
$C=C_1\times C_2$ 
be their cartesian product. For $(x_1,x_2)$, $(y_1,y_2)$ and $(z_1,z_2)$ in $C$, let 
$R((x_1,x_2),(y_1,y_2),(z_1,z_2))$ if, and only if, $R_1(x_1,y_1,z_1)$ and $R_2(x_2,y_2,z_2)$. 
Then $(C,R)$ is a p.c.o.\ group which is not a c.o.\ group. 
\end{examps}
\begin{examp} Any linearly ordered group is a c.o.\ group 
once equipped with the ternary relation: 
$R(x,y,z)$ iff $x<y<z$ or $y<z<x$ or $z<x<y$. In the same way, 
any partially ordered group is a p.c.o.\ group. 
\end{examp}
\indent  
In ordered sets, one often uses the notation $x_1<x_2<\cdots<x_n$. We define a similar 
notation for partial cyclic orders. 
\begin{defi}{\bf Notation.} Let $C$  be a p.c.o.\ group and 
$x_1,\dots,x_n$ in $C$, we will denote by $R(x_1,\dots,x_n)$ the formula: 
$R(x_1,x_2,x_3)\;\&\; R(x_1,x_3,x_4)\;\&\;\dots \;\&\;R(x_1,x_{n-1},x_n)$. 
\end{defi}
\indent  
In the unit circle $\UU$, $R(x_1,\dots,x_n)$ means that starting from $x_1$ one finds the 
elements $x_2,\dots,x_n$ in this order. 
\begin{Lemma} Let $C$  be a p.c.o.\ group and 
$x_1,\dots,x_n$ in $C$. \\ 
a) $R(x_1,\dots,x_n)\Leftrightarrow \forall (i,j,k) \in [1,n] \times [1,n]\times [1,n], \;
1\leq i<j<k\leq n\Rightarrow R(x_i,x_j,x_k)$. \\
b) $\forall y\in C,\; (R(x_1,\dots,x_n)\Leftrightarrow R(x_1+y,\dots,x_n+y))$. \\
c) $\forall i\in [1, n-1],\; (R(x_1,\dots,x_n)\Leftrightarrow 
R(x_{i+1},\dots,x_n,x_1,\dots,x_i))$. 
\end{Lemma}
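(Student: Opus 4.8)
The lemma has three parts, and I would prove them in the order (a), (b), (c), since (b) and (c) are quick consequences of the compatibility and cyclicity axioms once the characterization in (a) is in hand.

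For part (a), the right-to-left direction is immediate: the defining formula for $R(x_1,\dots,x_n)$ is precisely the conjunction of $R(x_1,x_j,x_{j+1})$ for $2\leq j\leq n-1$, which is a special case of the triples $R(x_i,x_j,x_k)$ with $i=1$. The substance is the left-to-right direction, where I must derive $R(x_i,x_j,x_k)$ for arbitrary $1\leq i<j<k\leq n$ from the hypotheses $R(x_1,x_{j-1},x_j)$ alone. The plan is to induct on $n$, and within that to first establish the claim for triples of the form $R(x_1,x_j,x_k)$ (i.e.\ with $i=1$) and then reduce the general case $i>1$ to this via the cyclic axiom (2). The key transitivity tool is axiom (3): since $\leq_{x_1}$ is a partial order, from $R(x_1,x_a,x_b)$ and $R(x_1,x_b,x_c)$, read as $x_a<_{x_1}x_b$ and $x_b<_{x_1}x_c$, I get $x_a<_{x_1}x_c$, which together with strictness (1) yields $R(x_1,x_a,x_c)$. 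Iterating this transitivity across the chain $x_2<_{x_1}x_3<_{x_1}\cdots<_{x_1}x_n$ gives every triple $R(x_1,x_j,x_k)$ with $1<j<k$. I expect this transitivity argument to be the main obstacle, as it is the only place where axiom (3) does genuine work and where one must be careful that the relevant elements are distinct so that $<_{x_1}$ (rather than merely $\leq_{x_1}$) applies.

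Once the case $i=1$ is settled, I handle a general triple $R(x_i,x_j,x_k)$ with $1<i<j<k$ as follows. By the already-proven case $i=1$, I have both $R(x_1,x_i,x_j)$ and $R(x_1,x_i,x_k)$ and $R(x_1,x_j,x_k)$. Applying the cyclic axiom (2) to $R(x_1,x_i,x_j)$ gives $R(x_i,x_j,x_1)$, and similarly from $R(x_1,x_i,x_k)$ I obtain $R(x_i,x_k,x_1)$, i.e.\ $R(x_i,x_1,x_k)$ after one more rotation. Reading these in the order $\leq_{x_i}$ and invoking transitivity of $\leq_{x_i}$ exactly as before, I conclude $x_j<_{x_i}x_k$, that is $R(x_i,x_j,x_k)$. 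This reduces everything to repeated use of (2) and (3).

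For part (b), invariance under translation, I apply part (a): $R(x_1,\dots,x_n)$ holds iff $R(x_i,x_j,x_k)$ for all $i<j<k$, and by compatibility (4) each such $R(x_i,x_j,x_k)$ is equivalent to $R(x_i+y,x_j+y,x_k+y)$ (the reverse implication of (4) coming from adding $-y$). Reassembling via (a) gives $R(x_1+y,\dots,x_n+y)$. For part (c), the cyclic shift, I again use (a): the conditions $R(x_i,x_j,x_k)$ for all triples in the shifted sequence $x_{i+1},\dots,x_n,x_1,\dots,x_i$ are, by the cyclic axiom (2), exactly the same set of conditions as for the original sequence, since (2) lets me rotate any individual triple whose indices wrap around. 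Verifying that the two index sets of triples coincide up to rotation is routine bookkeeping, so I would state it and check one representative wrapped triple rather than enumerate all cases.
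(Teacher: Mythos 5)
Your parts (b) and (c) are fine (your (c), matching up the two sets of triples via rotation, is a harmless variant of the paper's argument, which instead derives the single shift $R(x_2,\dots,x_n,x_1)$ directly from the definition plus (a) and then iterates), and your transitivity chain for the triples $R(x_1,x_j,x_k)$ with base point $x_1$ is exactly the paper's. But there is a genuine error in your reduction of the general case $1<i<j<k$. You claim that from $R(x_1,x_i,x_k)$ you get $R(x_i,x_k,x_1)$, ``i.e.\ $R(x_i,x_1,x_k)$ after one more rotation.'' That rotation is wrong: the cyclic axiom only produces the rotations $R(x_i,x_k,x_1)$ and $R(x_k,x_1,x_i)$ of $R(x_1,x_i,x_k)$; the triple $R(x_i,x_1,x_k)$ has the \emph{opposite} cyclic orientation and is not derivable --- indeed in a c.o.\ group $R(x_i,x_1,x_k)$ and $R(x_i,x_k,x_1)$ are mutually exclusive by antisymmetry of $\leq_{x_i}$. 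Without that false step, all you have at base point $x_i$ is $x_j<_{x_i}x_1$ and $x_k<_{x_i}x_1$, from which transitivity yields nothing about $x_j$ versus $x_k$. Worse, your final inference uses only the premises $R(x_1,x_i,x_j)$ and $R(x_1,x_i,x_k)$, never $R(x_1,x_j,x_k)$, and the conclusion genuinely fails from those two alone: in $\UU$ take $x_1=1$, $x_i=e^{i\pi/2}$, $x_j=e^{3i\pi/2}$, $x_k=e^{i\pi}$; then $R(x_1,x_i,x_j)$ and $R(x_1,x_i,x_k)$ hold but $R(x_i,x_k,x_j)$, not $R(x_i,x_j,x_k)$, holds.

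The repair is to change the base point from $x_i$ to $x_j$, which is what the paper does: rotate $R(x_1,x_i,x_j)$ twice to get $R(x_j,x_1,x_i)$, i.e.\ $x_1<_{x_j}x_i$, and rotate $R(x_1,x_j,x_k)$ once to get $R(x_j,x_k,x_1)$, i.e.\ $x_k<_{x_j}x_1$; transitivity of $\leq_{x_j}$ (with the same distinctness checks you flagged for the $i=1$ case) gives $R(x_j,x_k,x_i)$, and one final rotation yields $R(x_i,x_j,x_k)$. Note how this version uses all three triples supplied by the $i=1$ case, as any correct argument must.
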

\begin{proof} a) $\Leftarrow$ is straightforward. Assume that $R(x_1,\dots,x_n)$ 
holds 
and let $1\leq i<j<k\leq n$. Then $R(x_1,x_i,x_{i+1})$ and $R(x_1,x_{i+1},x_{i+2})$ hold.  
Therefore, 
since $<_{x_1}$ is transitive, $R(x_1,x_i,x_{i+2})$ holds, and so on. Hence $R(x_1,x_i,x_j)$ holds, and in 
the same way $R(x_1,x_j,x_k)$ holds. It follows that $R(x_j,x_1,x_i)$ and $R(x_j,x_k,x_1)$ hold. Hence 
$R(x_j,x_k,x_i)$ holds which implies that $R(x_i,x_j,x_k)$ holds. \\ 
\indent  b) For every $i$ in $[2,n-1]$, $R(x_1,x_i,x_{i+1})$ holds. Hence $R(x_1+y,x_i+y,
x_{i+1}+y)$ holds. Consequently, $R(x_1+y,\dots,x_n+y)$ also holds. \\ 
\indent  c) Assume that $R(x_1,\dots,x_n)$ holds. 
We have that $R(x_1,x_2,x_n)$ holds and by a) for every $i$ in 
$[3,n-1]$: $R(x_2, x_i,x_{i+1})$ holds. Therefore $R(x_2,\dots,x_n,x_1)$ holds. Now, c) follows by induction. 
\end{proof}
\indent  
If $C$ is a p.c.o.\ group, then by the definition $<_0$ is a partial order on 
the set $C$. Conversely, if a group is equipped with a partial order relation $<$, then we give a necessary 
and sufficient condition for $<$ being the order $<_0$ of some partial cyclic order. 
\begin{Proposition}\label{lem34} Let $C$ be a group. Then there 
exists a compatible partial cyclic order $R$ 
on $C$ if, and only if, there exists a partial order $<$ on the set 
$C \backslash \{ 0 \}$ such that for all $x$ and $y$ in 
$C \backslash \{ 0 \}$: $x<y \Rightarrow y-x < -x$. \\ 
If this holds, then we can set $R(x,y,z) \Leftrightarrow 0\neq y-x < z-x$, and $\leq$ is the 
restriction to $C \backslash \{ 0 \}$ of the relation $\leq_0$. 
\end{Proposition}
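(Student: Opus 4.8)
I need to prove Proposition \ref{lem34}: a group $C$ admits a compatible partial cyclic order $R$ if and only if there is a partial order $<$ on $C\backslash\{0\}$ such that $x<y \Rightarrow y-x < -x$, and that in this case $R(x,y,z)\Leftrightarrow 0\neq y-x<z-x$ recovers $R$ with $\le$ being the restriction of $\le_0$.

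Let me plan the proof.

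\bigskip

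\emph{Plan of proof.} The statement is an equivalence, so I will prove the two implications separately, and then verify the two closing claims about the explicit formulas.

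For the forward direction, suppose $R$ is a compatible partial cyclic order on $C$. The natural candidate is to take $<$ to be the restriction of $<_0$ to $C\backslash\{0\}$; by axiom (3) this is already a partial order on all of $C$, hence on $C\backslash\{0\}$. I must check the characteristic implication $x<y\Rightarrow y-x<-x$. Unwinding definitions, $x<_0 y$ with $x,y\neq 0$ means $R(0,x,y)$. Applying compatibility (4) with the translation $v=-x$ gives $R(-x,0,y-x)$, and then cyclicity (2) gives $R(0,y-x,-x)$, which is exactly $y-x<_0 -x$ (noting $y-x\neq 0$ since $x\neq y$, and $-x\neq 0$ since $x\neq 0$). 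This is the clean core of the argument and should go through directly.

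For the converse, suppose $<$ is a partial order on $C\backslash\{0\}$ with the stated property, and \emph{define} $R(x,y,z)\Leftrightarrow 0\neq y-x<z-x$; I then verify axioms (1)--(4). Strictness (1) follows because $y-x<z-x$ forces $y-x\neq z-x$ and $0\neq y-x$ forces $y\neq x$, while $z\neq x$ comes from $z-x\neq 0$ (an element cannot be $<$ itself in a partial order, and $0$ is excluded from the domain). Compatibility (4) is immediate since the defining condition depends only on the differences $y-x$, $z-x$, which are translation-invariant. The substantive checks are cyclicity (2) and the fact that $\le_x$ is a partial order (3). For cyclicity I must show $0\neq y-x<z-x$ implies $0\neq z-y<x-y$; setting $a=y-x$, $b=z-x$ this reads $0\neq a<b$ implies $0\neq b-a<-a$, which is precisely the hypothesis $a<b\Rightarrow b-a<-a$ applied with the pair $(a,b)$. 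I expect this translation step to be the main obstacle: I must keep careful track of which differences play the roles of the variables in the hypothesis, and confirm that $b-a=z-y$ and $-a=x-y$ are indeed nonzero so that they lie in the domain $C\backslash\{0\}$.

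Finally, for axiom (3) I fix $x$ and show $\le_x$, defined by $y\le_x z$ iff ($R(x,y,z)$ or $y=z$ or $y=x$), is a partial order. By compatibility it suffices to treat $x=0$, where $y\le_0 z$ reduces to $y<z$ (when $y,z\neq 0$) together with the boundary cases involving $0$; reflexivity and antisymmetry follow from those of $<$, and transitivity is inherited from $<$ once the cases where $y$ or $z$ equals $x=0$ are handled separately. To close, I observe that the construction in the converse and the restriction in the forward direction are mutually inverse: starting from $R$, forming $<$ as the restriction of $<_0$, and rebuilding $R$ via the formula returns the original relation, since $R(x,y,z)$ is equivalent by (4) and (2) to $R(0,y-x,z-x)$, i.e. to $0\neq y-x<z-x$. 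This yields both the explicit formula for $R$ and the identification of $\le$ with the restriction of $\le_0$, completing the proof.
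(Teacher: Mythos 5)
Your proposal is correct and follows essentially the same route as the paper: the forward direction via compatibility and cyclicity ($R(0,x,y)\Rightarrow R(-x,0,y-x)\Rightarrow R(0,y-x,-x)$), and the converse by defining $R(x,y,z)\Leftrightarrow 0\neq y-x<z-x$ and checking axioms (1)--(4), with cyclicity reducing to the hypothesis applied to the pair $(y-x,z-x)$ exactly as in the paper. Your only deviations are cosmetic: you reduce the verification that $\leq_x$ is a partial order to the case $x=0$ by translation invariance (the paper checks arbitrary $x$ directly), and you explicitly verify the round-trip identity $R(x,y,z)\Leftrightarrow R(0,y-x,z-x)$, which the paper leaves implicit.
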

\begin{proof} Assume that $C$ is a p.c.o.\ 
group, and $x<_0 y$ in $C\backslash \{ 0\}$. Then $R(0,x,y)$ holds. Hence by compatibility: 
$R(-x,0,y-x)$ holds so $R(0,y-x,-x)$ holds i.e.\ $y-x<_0 -x$. \\ 
\indent  Assume that $<$ is a strict partial order on $C \backslash \{ 0 \}$ 
such that for all $x$ and $y$ in $C \backslash \{ 0 \}$ we have that  
$x<y \Leftrightarrow y-x<-x$. For all $x, \; y, \; z$ in $C$ set 
$R(x,y,z)$ if, and only if, $0\neq y-x < z-x$. \\ 
\indent  $R(x,y,z)$ implies $y-x \neq 0$, $z-x \neq 0$ and 
$y-x \neq z-x$ hence $x \neq y$, $x \neq z$ and $y \neq z$. \\ 
\indent  $R(x,y,z)$ implies $y-x<z-x$, hence $z-y=(z-x)-(y-x)<
-(y-x)=x-y$. Therefore: $R(y,z,x)$. \\ 
\indent  Let $v\in C$ and assume that $R(x,y,z)$ and $R(x,z,v)$ hold. Then $y-x<z-x$ and 
$z-x<v-x$, hence: $y-x<v-x$ i.e.\ $R(x,y,v)$, so $\leq_x$ is 
transitive. Now, if $R(x,y,z)$ holds, then $y-x<z-x$, hence $z-x \not< 
y-x$, i.e.\ $\neg R(x,z,y)$. It follows that  $\leq_x$ 
is a partial order. \\ 
\indent  Assume that $R(x,y,z)$ holds, then 
$0\neq y-x<z-x$ hence $0\neq (y+v)-(x+v)<(z+v)-(x+v)$ therefore 
$R(x+v,y+v,z+v)$. \end{proof}
\indent  
The relation $<_0$ can makes easier the construction of 
p.c.o.\ groups. For example, let $C=\ZZ/ 6 \ZZ = 
\{ 0, 1,2,3,-2,-1 \}$, and set $1 <_0 2$, $1 <_0 -1$, $-2 <_0 2$, 
and $-2 <_0 -1$. One can check that in this case, $<_0$ cannot be extended to a total 
order. \\ 
\indent  We know that if $<$ is a compatible partial order on a group, then $x<y\Rightarrow -y<-x$. The partial 
order $<_0$ on a p.c.o.\ group satisfies a weaker property. 
\begin{Lemma} In a p.c.o.\ group 
$C$, we have that $\forall x \in C\backslash \{0\}$, $\forall y \in C\backslash \{0\}$, $x <_0 y 
\Rightarrow \neg (-x <_0 -y).$ 
\end{Lemma}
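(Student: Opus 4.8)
The plan is to argue by contradiction: assume both $x<_0 y$ and $-x<_0-y$, and derive a violation of the fact that $\leq_0$ is a partial order (Definition \ref{def31}(3)). Since $x,y,-x,-y$ are all nonzero and $x\neq y$, these two hypotheses unwind to $R(0,x,y)$ and $R(0,-x,-y)$ respectively, the clauses ``$a=b$'' and ``$a=0$'' in the definition of $\leq_0$ being excluded by strictness. The key observation is that the pair of assumptions is invariant under the substitution $(x,y)\mapsto(-x,-y)$, which merely interchanges the two hypotheses; hence any relation derived from them admits a ``mirror'' counterpart obtained by flipping all signs. I intend to produce a single strict inequality whose mirror is its own reverse, which is impossible in a partial order.

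Concretely, I first transform each hypothesis using compatibility (Definition \ref{def31}(4)) and cyclicity (2), exactly as in the proof of Proposition \ref{lem34}. Translating $R(0,x,y)$ by $-x$ gives $R(-x,0,y-x)$, and cyclicity yields $R(0,y-x,-x)$, i.e.\ $y-x<_0-x$. Translating $R(0,-x,-y)$ by $y$ gives $R(y,y-x,0)$, and cyclicity yields $R(0,y,y-x)$, i.e.\ $y<_0 y-x$. Because $\leq_0$ is a partial order, it is transitive, so chaining $y<_0 y-x$ with $y-x<_0-x$ gives $y<_0-x$, and chaining this with the hypothesis $x<_0 y$ gives $x<_0-x$, that is $R(0,x,-x)$.

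Now I invoke the symmetry: running the identical computation with $x$ and $y$ replaced by $-x$ and $-y$ produces $-x<_0 x$, that is $R(0,-x,x)$. Thus I would have simultaneously $x<_0-x$ and $-x<_0 x$; note that strictness (Definition \ref{def31}(1)) already forces $x\neq-x$, so both are genuine strict relations. Transitivity then gives $x<_0 x$, contradicting irreflexivity of $<_0$. This contradiction establishes $\neg(-x<_0-y)$.

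I do not expect a serious obstacle here; the only points requiring care are the bookkeeping of the two translations and checking at each step that strictness is not violated, so that the derived triples genuinely lie in $R$ rather than collapsing through an equality. The transitivity of $<_0$, guaranteed by Definition \ref{def31}(3), is what makes the chaining legitimate, and the sign symmetry of the hypotheses is what lets me close the argument without carrying out a second, independent computation.
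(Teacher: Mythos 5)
Your proof is correct and takes essentially the same route as the paper's: both repeatedly apply the implication $a<_0 b\Rightarrow b-a<_0-a$ (Proposition \ref{lem34}, via compatibility and cyclicity) to the two hypotheses and chain by transitivity until the partial order $\leq_0$ is violated. The paper assembles the single six-link cycle $-y<_0 x-y<_0 x<_0 y<_0 y-x<_0 -x<_0 -y$, ending in $-y<_0-y$, while you split that same cycle into two halves exchanged by the sign symmetry, concluding $x<_0-x$ and $-x<_0 x$ --- a purely organizational difference.
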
 
\begin{proof} Assume that $x<_0y$. Then: $y-x<_0-x$, and $-y = -x -(y-x) 
<_0 -(y-x)=x-y$. Now, assume that $-x<_0-y$. Then: $-y+x<_0x$, and: 
$y = x -(-y+x) <_0 -(-y+x)=y-x$. By transitivity: 
$-y <_0 x-y <_0 x <_0 y <_0 y-x <_0 -x <_0 -y$ implies: 
$-y <_0 -y$, a contradiction. \end{proof}
\indent  
Note that in the cyclically ordered case we deduce: $x<_0y\Rightarrow -y<_0-x$, since $<_0$ is 
a total order. 
\subsection{The subset of non-isolated elements.}
We define the set of non-isolated elements for the order $<_0$, and we prove that 
$<_0$ can be compatible in some cases. 
\begin{defi}\label{def316} Let $C$ be a p.c.o.\ group, 
we denote by $A(C)$ the set whose elements are $0$ and the $x\in 
C\backslash \{0\}$ such that there exists $y \in C\backslash \{0\}$ satisfying 
$x<_0 y$ or $y<_0 x$. The elements of $A(C)$ are called the {\it non-isolated} 
elements. 
\end{defi} 
\begin{Remarks}\label{rks36} 
1) Let $C$ be a p.c.o.\ group and $x$, $y$ in $A(C)\backslash \{0\}$.   
By Proposition 
\ref{lem34} since $x<_0 y\Rightarrow y-x<_0 -x$, if $x<_0 y$ in $A(C)$ and 
$z=y-x$, then $z\in A(C)\backslash \{0\}$. So there exists $z$ in $A(C)$ such that 
$y=x+z$. We see that this is similar to condition (i) in Remark \ref{rk26}. \\ 
2) If $-x<_0 y$, then $x+y<_0 x$.\\
3) 
Assume that $C$ and $C'$ are p.c.o.\ groups. It follows from Proposition 
\ref{lem34} that they are isomorphic if, and only if, there is a group isomorphism $\varphi$ 
from $C$ onto $C'$ such that for every $x$, $y$ in $C$: 
$$x\in A(C) \Leftrightarrow 
\varphi(x)\in A(C')\mbox{ }\&\mbox{ }x<_0y\Leftrightarrow \varphi(x)<_0\varphi(y).$$ 
\end{Remarks}
\begin{Proposition}\label{n38} (case of compatibility of $+$ and $<_0$). 
Let $C$ be a p.c.o.\ group such that 
for every $x$, $y$ in $A(C)\backslash \{0\}$ 
we have that $x<_0 y\Leftrightarrow -y<_0-x$ and 
$x+y\in A(C)\Leftrightarrow x\leq_0-y$ or $-y\leq_0 x$. Then for all $x$, $y$, $z$ in $A(C)
\backslash \{0\}$: 
$$(x<_0y\; \&\; 0<_0x+z<_0y+z) \Leftrightarrow (x<_0y<_0-z\mbox{ or } -z<_0x<_0y)$$ 
($\Leftarrow$ holds in every p.c.o.\ group). 
\end{Proposition}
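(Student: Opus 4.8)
The plan is to translate both sides into statements about the partial order $<_0$ by means of Proposition \ref{lem34}, and then argue by a case analysis. Throughout I use two facts valid in every p.c.o.\ group: by Proposition \ref{lem34} and compatibility, for $a,b\in C\backslash\{0\}$ one has $R(-z,a,b)\Leftrightarrow 0\neq a+z<_0 b+z$, and moreover $a<_0 b\Rightarrow b-a<_0 -a$; I also use freely that $<_0$ is a partial order (Definitions \ref{def31}), hence transitive and antisymmetric. Note that $0$ is the least element of $\leq_0$, so $0<_0 w$ simply means $w\neq 0$; thus the hypothesis $0<_0 x+z<_0 y+z$ is exactly $R(-z,x,y)$ together with $x+z\neq 0$.

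For the implication $\Leftarrow$, which holds in any p.c.o.\ group, I observe that each disjunct on the right is, in the chain notation, $R(0,x,y,-z)$ (for $x<_0 y<_0 -z$) or $R(0,-z,x,y)$ (for $-z<_0 x<_0 y$). By the Lemma describing the notation $R(x_1,\dots,x_n)$, both chains contain the triple $R(-z,x,y)$, which gives $0\neq x+z<_0 y+z$; since $x<_0 y$ is contained in each disjunct, the left-hand side follows.

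For the implication $\Rightarrow$, assume $x<_0 y$ and $0<_0 x+z<_0 y+z$. Then $x+z$ and $y+z$ are nonzero and $<_0$-comparable, hence lie in $A(C)\backslash\{0\}$, and $x\neq -z$, $y\neq -z$. Applying the hypothesis $x+y\in A(C)\Leftrightarrow(x\leq_0 -y\text{ or }-y\leq_0 x)$ to the pairs $(x,z)$ and $(y,z)$ yields the dichotomies $x<_0 -z$ or $-z<_0 x$, and $y<_0 -z$ or $-z<_0 y$. This splits into four cases. If $y<_0 -z$, then $x<_0 y<_0 -z$ is the first disjunct; if $-z<_0 x$, then $-z<_0 x<_0 y$ is the second; and the combination $-z<_0 x$ with $y<_0 -z$ would force $y<_0 x$, contradicting antisymmetry.

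The main obstacle is the remaining mixed case $x<_0 -z<_0 y$, which I must show is incompatible with $x+z<_0 y+z$. Here I chain the implication $a<_0 b\Rightarrow b-a<_0 -a$: from $-z<_0 y$ I get $y+z<_0 z$, so transitivity with $x+z<_0 y+z$ gives $x+z<_0 z$ and hence $-x<_0 -x-z$; but from $x<_0 -z$ I get $-x-z<_0 -x$. Since $z\neq 0$ these contradict antisymmetry, so the mixed case cannot occur, and the proof is complete.
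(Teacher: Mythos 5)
Your proof is correct and takes essentially the same route as the paper's: the $\Leftarrow$ direction by reading the two disjuncts as the chains $R(0,x,y,-z)$ and $R(0,-z,x,y)$, extracting $R(-z,x,y)$ and applying compatibility, and the $\Rightarrow$ direction by the identical four-way case split arising from the dichotomies $x<_0-z$ or $-z<_0x$, and $y<_0-z$ or $-z<_0y$. The only (minor) divergence is the exclusion of the mixed case $x<_0-z<_0y$: the paper dismisses it as incompatible with $R(-z,x,y)$ obtained by compatibility from $R(0,x+z,y+z)$, while you reach the same contradiction by iterating the rule $a<_0b\Rightarrow b-a<_0-a$ of Proposition \ref{lem34} to get $-x<_0-x-z$ and $-x-z<_0-x$ — a sound variant that in fact makes explicit a step the paper leaves to the reader.
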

\begin{proof}
In any p.c.o.\ group we have the following implications.  
$$\begin{array}{rcl}
0<_0x<_0y<_0-z &\Rightarrow &R(0,x,y,-z)\\
&\Rightarrow&R(0,x,y)\;\&\; R(x,y,-z)\\
&\Rightarrow &R(0,x,y)\;\&\; R(x+z,y+z,0)\\
&\Rightarrow &x<_0y\; \&\; 0<_0x+z<_0y+z.
\end{array}$$
In the same way:  
$$\begin{array}{rcl}0<_0-z<_0x<_0y &\Rightarrow &R(0,-z,x,y)\\
&\Rightarrow& R(0,x,y)\;\&\; R(-z,x,y)\\
&\Rightarrow &R(0,x,y)\;\&\; R(0,x+z,y+z)\\
&\Rightarrow &x<_0y\; \&\; x+z<_0y+z.\end{array}$$
Assume that $x<_0y$ and $0<_0x+z<_0y+z$ hold. It follows that $x+z$ and $y+z$ 
belong to $A(C)$ and by hypothesis, we have that either 
$x<_0-z$ or $-z<_0 x$. In the same way: either $y<_0-z$ or $-z<_0 y$. If 
$-z<_0 x$, then $-z<_0x<_0y$. If $y<_0-z$, then $x<_0y<_0-z$. It follows that $-z<_0x\;\&\;
y<_0-z$ does not hold. Now, $R(0,x+z,y+z)$ holds. Hence $R(-z,x,y)$ holds, so 
$x<_0-z<_0y$ does not hold. 
\end{proof}
\subsection{Wound-round p.c.o.\ groups.}
In the field $\CC$ of complex numbers, the multiplicative group $\UU$ of unimodular complex 
numbers is the image of the additive group $\RR$ of real numbers under the epimorphism 
$\theta\mapsto e^{i\theta}$. It follows that $\UU$ is isomorphic to the quotient group 
$\RR/2\pi\ZZ$. Then one can define the cyclic order on $\RR/2\pi\ZZ $ by: 
$R(x_1+2\pi\ZZ ,x_2+ 2\pi\ZZ ,x_3+ 2\pi\ZZ )$ if, and only if, there exists $x_j'$ in 
$[0,2\pi[$ such that $x_j-x_j'\in 2\pi\ZZ $ ($1\leq j\leq 3$) and $x_{\sigma(1)}'<
x_{\sigma(2)}'<x_{\sigma(3)}'$ for some $\sigma$ in 
the alternating group $A_3$ of degree $3$ (in other words, 
 $x_1'<x_2'<x_3'$ or $x_2'<x_3'<x_1'$ or $x_3'<x_1'<x_2'$). \\
\indent  More generally, if $(L,u)$ is a unital linearly ordered group, 
then the quotient group $L/\ZZ u$ can be cyclically ordered by setting 
$R(x_1+\ZZ u,x_2+ \ZZ u,x_3+ \ZZ u)$ if, and only if, there exists $x_j'$ in 
$[0,u[$ such that $x_j-x_j'\in \ZZ u$ ($1\leq j\leq 3$) and $x_{\sigma(1)}'<
x_{\sigma(2)}'<x_{\sigma(3)}'$ for some $\sigma$ in 
the alternating group $A_3$ of degree $3$ (\cite[p.\ 63]{Fu 63}). We say that 
$L/\ZZ u$ is the {\it wound-round} of $L$. Now, every c.o.\ group can be obtained 
in this way as shows the following theorem. 
\begin{Theorem}\label{thrieg} (Rieger, \cite{Fu 63}).  
Every c.o.\ group is the wound-round of a unique (up to isomorphism) unital 
linearly ordered group $(uw(C),u_C)$.
\end{Theorem}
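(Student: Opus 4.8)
The plan is to realize $C$ as a central extension of itself by $\ZZ$, carrying a compatible linear order for which $\ZZ u_C$ is the subgroup generated by a strong unit. First I would fix the linear order $\leq_0$ on $C$ (recall that, $C$ being a c.o.\ group, $\leq_0$ is a linear order on $C$ with least element $0$) and use $(C,\leq_0)$ as the set of ``positions'' on the circle. I set $uw(C)=\ZZ\times C$ as a set and define a twisting function $c\colon C\times C\to\{0,1\}$ by $c(x,y)=1$ if and only if $x\neq 0$, $y\neq 0$ and $x+y<_0 x$; intuitively $c(x,y)$ records whether adding $x$ and $y$ makes the position ``wrap past $0$''. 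The group law is then $(m,x)\oplus(n,y)=(m+n+c(x,y),x+y)$, with identity $(0,0)$ and $-(m,x)=(-m-c(x,-x),-x)$, and the order is the lexicographic one: $(m,x)\prec(n,y)$ iff $m<n$, or $m=n$ and $x<_0 y$.

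The key steps are then: (i) show that $c$ is a symmetric $2$-cocycle, i.e.\ $c(x,y)=c(y,x)$ and $c(x,y)+c(x+y,z)=c(y,z)+c(x,y+z)$, so that $\oplus$ is a commutative, associative group law; (ii) check that $\prec$ is a linear order compatible with $\oplus$, using that translation by a fixed $z$ acts on positions as an order-automorphism of the circle whose lift to $uw(C)$ is order-preserving --- here the implication of Proposition \ref{n38} (the part valid in every p.c.o.\ group) together with the cyclicity and compatibility of $R$ does the bookkeeping; (iii) verify that $u_C:=(1,0)$ satisfies $m\,u_C=(m,0)$ and is a strong unit with $u_C\succ(0,0)$, so that $(uw(C),u_C)$ is a unital linearly ordered group; (iv) identify $\ZZ u_C=\ZZ\times\{0\}$ as the kernel of the projection $(m,x)\mapsto x$, whence $uw(C)/\ZZ u_C\cong C$ as groups, and confirm that the wound-round cyclic order on the quotient, computed from representatives in $[0,u_C[$ (which correspond exactly to $C$ ordered by $\leq_0$), is precisely $R$.

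For uniqueness, suppose $(L,u)$ is any unital linearly ordered group with $L/\ZZ u\cong C$ as c.o.\ groups. Since $u$ is a strong unit and the order on $L$ is linear, every $a\in L$ lies in a unique interval $[nu,(n+1)u[$; writing $\bar a$ for the image of $a$ in $C$ under $L/\ZZ u\cong C$, the assignment $a\mapsto(n,\bar a)$ is a bijection $L\to uw(C)$. I would then check that it is a unital $\ell$-isomorphism: the position $a-nu\in[0,u[$ reproduces the $\leq_0$-order of $\bar a$, and the number of wraps occurring in the sum $a+b$ is exactly $c(\bar a,\bar b)$, so the bijection respects both the order and the twisted addition and sends $u$ to $u_C$. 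Hence $L\cong uw(C)$, which gives uniqueness up to isomorphism.

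The main obstacle is step (i), the cocycle identity: both sides count the total number of times the triple sum $x+y+z$ wraps around $0$, and proving their equality directly from the axioms of $R$ requires a case analysis on the relative cyclic positions of $x$, $y$, $z$ and of the partial sums $x+y$ and $y+z$. The compatibility of the order in step (ii) is of comparable difficulty and is handled by the same circle-of-positions picture; once these two verifications are in place, the remaining steps are routine.
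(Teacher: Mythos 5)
Your proposal is correct and takes essentially the same route as the paper, which does not prove Rieger's theorem itself (it cites \cite{Fu 63}) but records precisely your construction at the start of Section \ref{section5}: $uw(C)=\ZZ\times C$ with the lexicographic order, $u_C=(1,0)$, and the addition $(m,x)+(n,y)=(m+n,x+y)$ if $x=y=0$ or $\min_0(x,y)<_0 x+y$, and $(m,x)+(n,y)=(m+n+1,x+y)$ otherwise --- a wrap condition equivalent to your cocycle $c(x,y)$ because $\leq_0$ is a linear order on a c.o.\ group. Your steps (i)--(iv) and the uniqueness argument via the decomposition $a\in[nu,(n+1)u[$ are exactly the standard Rieger/Fuchs verification, with the flagged case analyses (cocycle identity and order compatibility) being routine but genuinely necessary.
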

\begin{defi} Let $C$ be a c.o.\ group. Then $uw(C)$ is called the 
{\it unwound} of $C$. \end{defi}
\begin{Corollary}\label{functco} 
The wound-round mapping defines a full and faithfull functor from the category of unital linearly ordered groups, 
together with unital increasing group 
homomorphisms, to the category of c.o.\ groups, together with c-homomorphisms. The unwound 
mapping defines a full and faithfull functor from the category of c.o.\ groups, together with c-homomorphisms, 
to the category of unital linearly ordered groups, 
together with unital increasing group homomorphisms. The composites 
of these two functors are equivalent to the identities of respective categories. 
\end{Corollary}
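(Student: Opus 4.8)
The plan is to prove Corollary~\ref{functco} by leveraging the Rieger correspondence of Theorem~\ref{thrieg} together with the explicit description of the wound-round cyclic order. First I would fix the two wound-round/unwound constructions as object maps: the wound-round sends a unital linearly ordered group $(L,u)$ to the c.o.\ group $L/\ZZ u$ equipped with the relation $R$ defined via representatives in $[0,u[$, and Theorem~\ref{thrieg} guarantees that the unwound $uw(C)$ is the inverse construction on objects, uniquely up to isomorphism. So on the level of objects the two maps are mutually inverse bijections (up to isomorphism), which already establishes that the composite functors will be naturally equivalent to the identities once the arrow maps are defined compatibly.

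Next I would define the two functors on morphisms. Given a unital increasing homomorphism $f\colon (L,u)\to(L',u')$ with $f(u)=u'$, it maps $\ZZ u$ into $\ZZ u'$, so it induces a group homomorphism $\bar f\colon L/\ZZ u\to L'/\ZZ u'$. The key verification is that $\bar f$ is a c-homomorphism: assuming $R(x_1+\ZZ u,x_2+\ZZ u,x_3+\ZZ u)$ and that the three images are pairwise distinct, I must show $R(\bar f(x_1)+\ZZ u',\ldots)$ holds. Using the representatives $x_j'\in[0,u[$ with $x_1'<x_2'<x_3'$ (up to cyclic permutation), monotonicity of $f$ gives $f(x_1')\le f(x_2')\le f(x_3')$ in $[0,u']$; since $f(u)=u'$, each $f(x_j')$ lies in $[0,u']$, and reducing modulo $\ZZ u'$ together with the distinctness hypothesis yields the required strict cyclic relation. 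Conversely, for the unwound functor I would send a c-homomorphism $g\colon C\to C'$ to the unique unital increasing homomorphism $uw(g)\colon uw(C)\to uw(C')$ whose wound-round recovers $g$; its existence and uniqueness follow from the universal/functorial content of the Rieger theorem.

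With both arrow maps in hand, functoriality (preservation of identities and composition) is routine because both reduce to functoriality of the quotient construction and of taking induced maps, so I would state it and omit the mechanical check. To finish I must establish that both functors are full and faithful. Faithfulness of the wound-round functor amounts to showing that $\bar f=\bar g$ forces $f=g$: since $f,g$ agree on $L/\ZZ u$ and both fix $u$, they agree on a set of representatives and on $u$, hence on all of $L=\bigcup_n(\,[0,u[\,+nu)$ by additivity. Fullness asserts that every c-homomorphism $C\to C'$ arises as $\bar f$ for some unital increasing $f$; here I would use the unwound functor to produce the candidate $f=uw(g)$ and check $\overline{uw(g)}=g$, which is precisely the statement that wound-round and unwound are inverse on morphisms.

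The hard part will be the fullness/faithfulness bookkeeping at the morphism level, specifically checking that the induced map $\bar f$ between wound-rounds is not merely a group homomorphism but genuinely a c-homomorphism, and conversely that an arbitrary c-homomorphism lifts uniquely to a unital increasing homomorphism on the unwounds. The delicate point in the c-homomorphism verification is handling the case where the images of the $x_j$ collide modulo $\ZZ u'$ (so that the hypothesis $f(x)\ne f(y)\ne f(z)\ne f(x)$ becomes essential), and the delicate point in lifting is ensuring that the order structure on $uw(C)$ induced by $uw(C')$ through $g$ coincides with the intrinsic one, which is exactly where the uniqueness clause of Theorem~\ref{thrieg} does the work.
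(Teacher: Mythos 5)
The wound-round half of your argument is sound and is essentially what the paper proves later, in the more general partially ordered setting, as Proposition \ref{funct} (the paper states Corollary \ref{functco} itself without proof, as a consequence of Theorem \ref{thrieg}). But your two remaining steps have real gaps. For faithfulness, the sentence ``since $f,g$ agree on $L/\ZZ u$ and both fix $u$, they agree on a set of representatives'' is a non sequitur: $\bar f=\bar g$ only gives $f(x)-g(x)\in\ZZ u'$ for each $x$, and the possibility that they differ by $\pm u'$ on representatives is exactly what must be excluded. It can be excluded, but the argument needs monotonicity and unitality, not just additivity: normalizing to $0<x<u$, the constraints $f(x)\in[0,u']$ and $g(x)\in[0,u']$ force $f(x)-g(x)\in\{-u',0,u'\}$; if, say, $f(x)=u'$ and $g(x)=0$, then comparing $2x$ with $u$ yields either $2u'=f(2x)\leq f(u)=u'$ or $2u'=g(2(u-x))\leq g(u)=u'$, a contradiction. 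None of this appears in your write-up.

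The more serious gap is fullness, equivalently the definition of the unwound functor on morphisms. Theorem \ref{thrieg} as stated is purely object-level: it gives existence and uniqueness of the unwound group, and carries no universal property quantifying over morphisms. So ``its existence and uniqueness follow from the universal/functorial content of the Rieger theorem'' is circular --- the lifting of a c-homomorphism $g\colon C\to C'$ to a unital increasing homomorphism is precisely the content of fullness, and a proof must construct it, e.g.\ on the explicit model $uw(C)=\ZZ\times C$ recalled in Section \ref{section5}, via $(m,x)\mapsto(m,g(x))$, and then verify that $g$ preserves the carry in the twisted addition ($(m,x)+(n,y)=(m+n+1,x+y)$ when $x+y\leq_0\min{}_0(x,y)$). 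That verification is where the difficulty lives, and it is sensitive to the paper's weak definition of c-homomorphism, whose hypothesis is vacuous whenever images collide: taken literally, the zero map $\ZZ/3\ZZ\to\ZZ/3\ZZ$ is a c-homomorphism, yet a unital increasing $f\colon(\ZZ,3)\to(\ZZ,3)$ inducing it would need $f(1)\in 3\ZZ$ and $3f(1)=3$ simultaneously, which is impossible. So the lift does not exist for arbitrary c-homomorphisms in the literal sense, and any correct proof of the corollary must engage with this point (for instance by carrying out the carry-preservation check under the appropriate reading of the morphisms, such as injective c-homomorphisms) rather than treating it as routine bookkeeping.
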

\indent  Now, we generalize this winding construction to partially ordered groups. 
\begin{Proposition}\label{prop37} 
Let $(G,<)$ be a partially ordered group, $0<u \in G$, $C$ be the quotient group 
$C=G/\ZZ u$ and $\rho$ be the canonical mapping from $G$ onto $C$.  \\ 
(1) For every $x$ and $y$ in $G$, there exists at most one $n \in \ZZ$ 
such that $x<y+nu<x+u$. \\ 
(2) For every $x_1$, $x_2$, $x_3$ in $G$, set 
$R(\rho(x_1),\rho(x_2),\rho(x_3))$ if, and only if, there 
exist $n_2$ and $n_3$ in $\ZZ$ such that $x_1 < x_2 + n_2 u < 
x_3+n_3u < x_1+u$. Then $(C,R)$ is a 
p.c.o.\ group. 
\end{Proposition}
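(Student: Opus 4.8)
The plan is to prove the two parts of Proposition \ref{prop37} in order, first establishing the uniqueness statement (1), which will then be used to make the relation $R$ in (2) well-defined and to verify the four axioms of a p.c.o.\ group from Definition \ref{def31}.

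For part (1), suppose that both $x<y+nu<x+u$ and $x<y+n'u<x+u$ hold for integers $n,n'\in\ZZ$. Subtracting, I would note that $y+nu$ and $y+n'u$ both lie in the open interval $]x,x+u[$, so their difference $(n-n')u$ satisfies $-u<(n-n')u<u$. Since $u>0$ is a strong unit, this forces $n-n'=0$: indeed if $n-n'\geq 1$ then $(n-n')u\geq u$, and if $n-n'\leq -1$ then $(n-n')u\leq -u$, both contradicting the strict inequalities. Hence $n=n'$, giving uniqueness.

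For part (2), the first task is well-definedness: I must check that the truth of $R(\rho(x_1),\rho(x_2),\rho(x_3))$ does not depend on the choice of representatives $x_1,x_2,x_3$. If $x_j$ is replaced by $x_j+m_ju$ with $m_j\in\ZZ$, then the witnessing integers $n_2,n_3$ simply absorb the shifts (replacing $n_2$ by $n_2+m_2-m_1$ and $n_3$ by $n_3+m_3-m_1$), and the chain of inequalities $x_1<x_2+n_2u<x_3+n_3u<x_1+u$ is preserved after translating everything by $-m_1u$; here uniqueness from (1) guarantees there is no ambiguity in the relation being detected. Then I would verify the four axioms. Strictness (1): if $R$ holds then $x_1$, $x_2+n_2u$, $x_3+n_3u$ are pairwise distinct in $G$ (being strictly ordered), so their images $\rho(x_1),\rho(x_2),\rho(x_3)$ are pairwise distinct, since two of them being equal would mean the difference lies in $\ZZ u$, contradicting strict separation within a window of length $u$. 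Cyclicity (2): from $x_1<x_2+n_2u<x_3+n_3u<x_1+u$ I would shift the window to start at $x_2+n_2u$, obtaining $x_2+n_2u<x_3+n_3u<x_1+u<x_2+(n_2+1)u$, which is exactly a witness for $R(\rho(x_2),\rho(x_3),\rho(x_1))$. Compatibility (4): adding a representative $v$ of a coset to all three terms translates the entire inequality chain by $v$ and leaves the witnessing integers unchanged, giving $R(\rho(x_1)+\rho(v),\ldots)$.

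The main obstacle, and the step requiring the most care, is axiom (3): showing that for each fixed $x$ the relation $\leq_x$ defined by $y\leq_x z\Leftrightarrow(R(x,y,z)$ or $y=z$ or $y=x)$ is a genuine partial order, i.e.\ reflexive, antisymmetric and transitive. Reflexivity and antisymmetry are built into the definition together with strictness, so the real work is transitivity. By compatibility it suffices to treat $x=\rho(0)$, reducing to showing that $R(\rho(0),\rho(y),\rho(z))$ and $R(\rho(0),\rho(z),\rho(w))$ imply $R(\rho(0),\rho(y),\rho(w))$. Choosing representatives with $0<y+n u<z+n'u<u$ and $0<z+m u<w+m'u<u$, uniqueness from (1) forces the representative of $z$ in the window $]0,u[$ to coincide, so that $m=n'$ and $m'$ adjusts accordingly, and I would then chain the inequalities $0<y+nu<z+n'u<w+m'u<u$ to read off $R(\rho(0),\rho(y),\rho(w))$. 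The delicate point is precisely this matching of the two witnesses for the common coset $\rho(z)$, which is exactly where the uniqueness established in part (1) is indispensable; without it one could not align the two inequality chains.
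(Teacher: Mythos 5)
Your proposal is correct in substance, but it takes a genuinely different route from the paper. After the same well-definedness computation, the paper does \emph{not} verify the four axioms of Definition \ref{def31} one by one: it introduces the binary relation $\rho(x)<_0\rho(y)\Leftrightarrow R(\rho(0),\rho(x),\rho(y))$, checks that it is a strict partial order on $C\backslash\{0\}$ (anti-reflexivity and anti-symmetry "follow from (1)", transitivity is the same witness-matching you carry out), proves the single implication $x<_0y\Rightarrow y-x<_0-x$ by the explicit chain $0<y-x+(n-m)u<-x+(1-m)u<u$, and then invokes Proposition \ref{lem34}, which packages strictness, cyclicity, compatibility, and the partial-order property of every $\leq_x$ into that one criterion. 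Your direct verification re-proves what Proposition \ref{lem34} abstracts: it is more self-contained (the paper's route also tacitly requires observing that the given $R$ coincides with the relation reconstructed from $<_0$ as in Proposition \ref{lem34}, which is exactly the translation-invariance you check), whereas the paper's route is shorter and isolates the key translation step once and for all. Your reduction of transitivity to the base point $\rho(0)$ via compatibility, and the matching of the two witnesses for the common coset through part (1), is precisely the mechanism the paper uses when it declares transitivity of $<_0$ "trivial".

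One step in your write-up is genuinely too quick: antisymmetry of $\leq_x$ is \emph{not} "built into the definition together with strictness". Strictness only forbids $R(x,y,z)$ when two of the three arguments coincide; it does not exclude that $R(x,y,z)$ and $R(x,z,y)$ hold simultaneously for pairwise distinct $x,y,z$. Ruling this out needs exactly the witness-matching via part (1) that you deploy for transitivity: reducing to base point $\rho(0)$, from $R(0,a,b)$ and $R(0,b,a)$ one gets $0<a+nu<b+ku<u$ and $0<b+k'u<a+n'u<u$; part (1) forces $k'=k$ and $n'=n$, whence $a+nu<a+nu$, a contradiction. (The paper likewise attributes anti-symmetry of $<_0$ to (1), not to strictness.) Two cosmetic slips do not affect correctness: in the well-definedness step the adjusted witnesses are $n_2+m_1-m_2$ and $n_3+m_1-m_3$, not $n_2+m_2-m_1$ and $n_3+m_3-m_1$; and in part (1) no strong-unit hypothesis is assumed or needed --- the proposition only supposes $0<u$, and your own argument ($n-n'\geq 1\Rightarrow(n-n')u\geq u$) uses only that.
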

\begin{proof}
(1) Assume that $n$ and $n'$ are integers such that $x<y+nu<x+u$ 
and $x<y+n'u<x+u$. Then $-x-u<-y-n'u<-x$. Therefore, by addition, 
$-u<(n-n')u<u$, hence $n-n'=0$. \\ 
\indent  (2) Assume that $x_1 < x_2 + n_2 u < x_3+n_3u < x_1+u$, 
let $x_1'$, $x_2'$, $x_3'$ in $G$ such that $\rho(x_i')=\rho(x_i)$ ($i\in \{1,2,3\}$), 
and let $n_1'$, $n_2'$, $n_3'$ be the integers such that $x_i'=x_i+n_iu$ ($i\in \{1,2,3\}$). 
Then $x_1'-n_1'u<x_2'-n_2'u+n_2u<x_3'-n_3'u+n_3u<x_1'-n_1'u$. Hence 
$x_1'<x_2'+(n_2+n_1'-n_2')u<x_3'+(n_3+n_1'-n_3')u<x_1'+u$. So $R$ is indeed 
a ternary relation on $C$. \\ 
\indent  We set $\rho(0)<_0\rho(x) <_0 \rho(y) \Leftrightarrow 
R(\rho(0), \rho(x) , \rho(y))$, and we prove that $<_0$ is a strict partial 
order relation such that $\rho(0)<_0\rho(x) <_0 \rho(y) \Leftrightarrow 
\rho(y)-\rho(x)<_0 -\rho(x)$. \\ 
\indent  By the definition, $\rho(0)<_0\rho(x) <_0 \rho(y)$ iff there exist 
$m$ and $n$ in $\ZZ$ such that $0<x+mu<y+nu<u$. It follows from (1)  
that $<_0$ is anti-reflexive and anti-symmetric. 
The transitivity is trivial. From $x+mu<y+nu<u$, it follows that 
$0<y-x+(n-m)u<-x+(1-m)u$. Now, we have that $-u<-y-nu<-x-mu<0$, 
hence $0<-y+(1-n)u<-x+(1-m)u<u$. This completes the inequality: 
$0<y-x+(n-m)u<-x+(1-m)u<u$, and consequently 
$\rho(y)-\rho(x)<_0 -\rho(x)$. Now, by Proposition \ref{lem34}, $G/\ZZ u$ is a 
p.c.o.\ group. \end{proof}
\begin{examp}
Let $G_1$ be the lexicographically ordered group 
$\RR \overrightarrow{\times} \RR$, and $G_2$ be the group 
$\RR \times \RR$ partially ordered in the following way: $(x,y) \leq (x',y') 
\Leftrightarrow x=x'$ and $y \leq y'$. We also define a partial cyclic order $R'$ on 
$\RR \times (\RR/\ZZ)$ by setting $R'((x_1,y_1),(x_2,y_2),(x_3,y_3))$ if, and only if, 
$x_1=x_2=x_3$ and $R(y_1,y_2,y_3)$, where $R$ is the cyclic order of $\RR/\ZZ$ 
defined in Proposition \ref{prop37}. In $G_1$ and $G_2$, let $u=(0,1)$, then 
$G_1/\ZZ u \simeq G_2/\ZZ u \simeq \RR \times (\RR/\ZZ)$ in the language of p.c.o.\ groups. \end{examp}
\begin{Remark}\label{rk38} In the proof of Proposition \ref{prop37}, 
we showed that the p.c.o.\ group $C=G/\ZZ u$ 
enjoys for all $x$, $y$: $0<_0x<_0y \Leftrightarrow 0<_0-y <_0 -x$. In particular, 
$x\in A(C) \Leftrightarrow -x\in A(C)$. 
\end{Remark} 
\begin{Remark} If $G$ is a partially ordered group and $0<u\in G$, then one can easily check that 
the subset $H:=\{x\in G\mid \exists (m,n)\in \ZZ\times\ZZ,\; mu\leq x\leq nu\}$ is a subgroup of 
$G$, and $u$ is a strong unit of $H$. Now, if $\rho(x)$ is a non isolated element of 
$C=G/\ZZ u$, then there exists $n\in \ZZ$ such that $0<x+nu<u$. In particular, 
$-nu<x<(1-n)u$. Hence, $x\in H$. It follows that we can restrict ourselves to the subgroup 
$H/\ZZ u$, or assume that $u$ is a strong unit of $G$. 
\end{Remark} 
\begin{defi}\label{def325} Let $C$ be a p.c.o.\ group.  
We will say that $C$ is a {\it wound-round} if there exists a unital partially ordered 
group $(G,u)$ such that $C\simeq G/\ZZ u$, partially 
cyclically ordered as in Proposition \ref{prop37}. If $G$ is an $\ell$-group, then we say that 
$C$ is the {\it wound-round of a lattice}. 
If $(G,u)$ is uniquely defined 
(up to isomorphism), then it is called the {\it unwound} of $C$. 
\end{defi}
\begin{Proposition}\label{funct} 
The wound-round mapping defines a functor  $\Theta$ from the category of unital 
partially ordered groups, together with unital increasing group 
homomorphisms, to the category of p.c.o.\ groups, together with c-homomorphisms. 
\end{Proposition}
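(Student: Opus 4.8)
The plan is to verify the three ingredients that make $\Theta$ a functor: that it carries objects to objects, that it carries a unital increasing homomorphism to a c-homomorphism, and that it respects identities and composition. The object assignment is already done for us: by Proposition \ref{prop37}, for a unital partially ordered group $(G,u)$ the quotient $G/\ZZ u$, equipped with the ternary relation $R$ of that proposition, is a p.c.o.\ group, so setting $\Theta(G,u):=G/\ZZ u$ lands in the target category. So the real work is at the level of morphisms.

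First I would fix a unital increasing group homomorphism $f:(G,u)\to(G',u')$ and write $\rho$, $\rho'$ for the canonical projections onto $G/\ZZ u$ and $G'/\ZZ u'$. Since $f$ is unital we have $f(u)=u'$, hence $f(nu)=nu'$ and $f(\ZZ u)\subseteq \ZZ u'$; therefore $f$ induces a group homomorphism $\Theta(f):G/\ZZ u\to G'/\ZZ u'$ given by $\Theta(f)(\rho(x))=\rho'(f(x))$, well defined because $x-y\in\ZZ u$ forces $f(x)-f(y)\in\ZZ u'$. What remains is to check that $\Theta(f)$ is a c-homomorphism.

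For that step, suppose $R(\rho(x_1),\rho(x_2),\rho(x_3))$ holds and the three images $\rho'(f(x_i))$ are pairwise distinct. By the definition of $R$ in Proposition \ref{prop37} there are integers $n_2,n_3$ with $x_1<x_2+n_2u<x_3+n_3u<x_1+u$. Applying the increasing homomorphism $f$ and using $f(u)=u'$ gives only the weak chain $f(x_1)\leq f(x_2)+n_2u'\leq f(x_3)+n_3u'\leq f(x_1)+u'$. The point is that each of these inequalities is in fact strict: $\rho'(f(x_1))\neq\rho'(f(x_2))$ means $f(x_1)-f(x_2)\notin\ZZ u'$, so $f(x_1)\neq f(x_2)+n_2u'$ and hence $f(x_1)<f(x_2)+n_2u'$; likewise $\rho'(f(x_2))\neq\rho'(f(x_3))$ makes the middle inequality strict, and $\rho'(f(x_3))\neq\rho'(f(x_1))$ makes the last one strict. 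This yields $f(x_1)<f(x_2)+n_2u'<f(x_3)+n_3u'<f(x_1)+u'$, which is precisely the condition defining $R(\rho'(f(x_1)),\rho'(f(x_2)),\rho'(f(x_3)))$, that is, $R(\Theta(f)(\rho(x_1)),\Theta(f)(\rho(x_2)),\Theta(f)(\rho(x_3)))$.

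Finally, functoriality is routine: $\Theta(\mathrm{id}_{(G,u)})$ sends $\rho(x)$ to $\rho(x)$ and so is the identity, and for composable $f,g$ with projections $\rho,\rho',\rho''$ one has $\Theta(g\circ f)(\rho(x))=\rho''(g(f(x)))=\Theta(g)(\Theta(f)(\rho(x)))$, whence $\Theta(g\circ f)=\Theta(g)\circ\Theta(f)$. The hard part, and the only genuinely nontrivial point, is the strictness argument in the c-homomorphism step: an increasing homomorphism need not be strictly increasing, so the strict inequalities defining $R$ degenerate to weak ones under $f$, and it is exactly the pairwise-distinctness clause built into the definition of a c-homomorphism that restores strictness, since distinctness modulo $\ZZ u'$ forbids equality at each link of the chain. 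This is precisely why the target morphisms are taken to be c-homomorphisms rather than order-preserving maps.
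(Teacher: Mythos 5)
Your proposal is correct and follows essentially the same route as the paper: induce $\bar{f}(\rho(x))=\rho'(f(x))$ on the quotients via $f(\ZZ u)\subseteq \ZZ u'$, then use the pairwise-distinctness clause in the definition of a c-homomorphism to upgrade the weak inequalities $f(x_1)\leq f(x_2)+n_2u'\leq f(x_3)+n_3u'\leq f(x_1)+u'$ to strict ones. In fact your version is slightly more complete than the paper's, which only verifies strictness along the chain $f(x)<f(y)<f(z)$ and leaves the wrap-around link $f(x_3)+n_3u'<f(x_1)+u'$ (needed to witness $R$ in $G'/\ZZ u'$) implicit, whereas you check it explicitly.
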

\begin{proof} We prove that if $f$ is a unital increasing homomorphism between the unital partially 
ordered groups $(G,u)$ and $(G',u')$, then we can define a c-homomorphism between 
$C:=G/\ZZ u$ and $C':=G'/\ZZ u'$. Let $\rho$ (resp.\ $\rho'$) be the canonical epimorphism 
from $G$ onto $C$ (resp.\ from $G'$ onto $C'$). Since $f(\ZZ u)=\ZZ u'$, we can define a group homomorphism 
$\bar{f}$ between $C$ and $C'$ by setting for every $x\in G$ $\bar{f}(\rho(x))=\rho'(f(x))$. 
Let $x<y<z$ in $G$ such that $\bar{f}(\rho(x))\neq \bar{f}(\rho(y))\neq \bar{f}(\rho(z))\neq \bar{f}(\rho(x))$. 
Since $f$ is increasing, we have that $f(x)\leq f(y)\leq f(z)$. Now, $f(x)\neq f(y)\neq f(z)$, 
so we have that $f(x)<f(y)<f(z)$. We deduce that if $R(\rho(x),\rho(y),\rho(z))$ holds and 
$\bar{f}(\rho(x))\neq \bar{f}(\rho(y))\neq \bar{f}(\rho(z))\neq \bar{f}(\rho(x))$,  then 
$R(\bar{f}(\rho(x)), \bar{f}(\rho(y)), \bar{f}(\rho(z))$ holds. Hence $f$ is a 
c-homomorphism. Now, one can check that if $f\circ g$ is the composite of two unital increasing 
homomorphisms, then $\bar{f}\circ\bar{g}=\overline{f\circ g}$. 
\end{proof}
\indent  We turn to the first-order theory of the wound-round p.c.o.\ groups. 
\begin{Lemma}\label{lem38} 
Let $(G,u)$ be a unital partially ordered group, $C$ be the quotient group 
$C=G/\ZZ u$, $\rho$ be the canonical mapping from $G$ onto $C$ and 
$G_u=\{x\in G\mid x\geq 0 \;\&\; x \not\geq u\}$. 
Then: \\
$\bullet$ the restriction of $\rho$ to the subset $G_u$ is a one-to-one mapping onto $C$, \\
$\bullet$ for every $x$, $y$, $z$ in $G_u$, $\rho(x)+\rho(y)=\rho(z)\Leftrightarrow 
x+y-z\in \ZZ u$ and $R(\rho(x),\rho(y),\rho(z))$ if, and only if, either 
$x<y<z$ or $y<z<x$ or $z<x<y$. 
\end{Lemma}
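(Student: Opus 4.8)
The plan is to treat the three assertions separately and in increasing order of difficulty: first the bijectivity of the restriction of $\rho$ to $G_u$, then the description of the group law, and finally the cyclic order, which is the delicate point.

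For the bijection I would fix $w\in G$ and examine the set $T=\{n\in\ZZ\mid w+nu\geq 0\}$. Since $u>0$, adding $u$ preserves positivity, so $T$ is upward closed; since $u$ is a strong unit there are integers $p,q$ with $-pu\leq w\leq qu$, which forces $T$ to be nonempty and bounded below (if $w+nu\geq 0$ held for all $n$ we would get $ku\leq w\leq qu$ for arbitrarily large $k$, contradicting $u>0$). Hence $T$ has a least element $n_0$, and $x:=w+n_0u$ satisfies $x\geq 0$ while $x-u=w+(n_0-1)u\not\geq 0$, i.e.\ $x\not\geq u$; thus $x\in G_u$ and $\rho(x)=\rho(w)$, giving surjectivity. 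For injectivity, if $x,x'\in G_u$ and $\rho(x)=\rho(x')$ then $x'=x+ku$ with, say, $k\geq 1$; but then $x'=x+ku\geq u$ because $x\geq 0$, contradicting $x'\not\geq u$, so $k=0$ and $x=x'$.

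The group-law statement is immediate: $\rho$ is a group homomorphism with kernel $\ZZ u$, so $\rho(x)+\rho(y)=\rho(x+y)=\rho(z)$ holds exactly when $x+y-z\in\ZZ u$. No use of the order is needed here.

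For the cyclic order I would unfold the definition from Proposition \ref{prop37}: $R(\rho(x),\rho(y),\rho(z))$ means there are $n_2,n_3\in\ZZ$ with $x<y+n_2u<z+n_3u<x+u$. The strategy is to show that, for representatives $x,y,z\in G_u$, one may always take $n_2=n_3=0$, so that $R(\rho(x),\rho(y),\rho(z))$ reduces to $x<y<z<x+u$; the three listed disjuncts then arise from the cyclic invariance $R(\rho(x),\rho(y),\rho(z))\Leftrightarrow R(\rho(y),\rho(z),\rho(x))$ already built into $R$, applied to the three choices of base point among $x,y,z$. One direction is easy: from a chain lying inside the window $[x,x+u[$ one reads off $R$ with zero shifts. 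The reverse reduction — passing from arbitrary shifts $n_2,n_3$ back to the canonical representatives — is where I expect the main obstacle to lie, and it is precisely the place where the partial (as opposed to linear) order bites. In the linearly ordered case an element $y+n_2u$ lying strictly between $x$ and $x+u$ is automatically the unique $G_u$-representative of $\rho(y)$, hence equal to $y$; in a merely partially ordered $G$ one must instead invoke the uniqueness of representatives from the first bullet together with the bounds $x,y,z\geq 0$ and $x,y,z\not\geq u$ to pin down $n_2$ and $n_3$, and to secure the genuine window inequality $z<x+u$ rather than only the automatic $z\not\geq x+u$. I would therefore isolate this comparability step as a separate claim; I expect it to require that the representatives in question be pairwise comparable (equivalently, that the relevant elements be non-isolated in the sense of Definition \ref{def316}), and this comparability is the real content of the lemma.
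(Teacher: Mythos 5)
Your first two bullets are correct and follow the paper's own route: the paper obtains the $G_u$-representative by taking $m$ maximal with $mu\leq x$, which is exactly your minimal element $n_0$ of $T$ in disguise, and its injectivity argument is word for word yours ($y=x+mu$ with $m\geq 1$ gives $y\geq x+u\geq u$). The group-law bullet is indeed immediate; note that the paper offers no argument at all for the third bullet, dismissing everything past the bijection as ``straightforward''.

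The genuine issue is the third part, which you rightly refused to wave through: the comparability step you isolated cannot be closed, because the stated equivalence is false for general unital partially ordered groups --- even for unital $\ell$-groups. Take $G=\ZZ\times\ZZ$ with the componentwise order and $u=(1,1)$ (a strong unit), and $x=(0,0)$, $y=(1,0)$, $z=(2,0)$, all in $G_u$. Then $x<y<z$, yet $R(\rho(x),\rho(y),\rho(z))$ fails: in $x<y+n_2u<z+n_3u<x+u$ the right inequality forces $n_3\leq -1$, the left forces $n_2\geq 0$, and the middle forces $n_2\leq n_3$, a contradiction. This is precisely your missing window inequality $z<x+u$: the ``pinning down'' argument you sketched does work ($x,y,z\geq 0$ and $x,y,z\not\geq u$ force $n_2,n_3\in\{0,1\}$ and exclude $(n_2,n_3)=(1,0)$, and the three surviving patterns yield exactly the three chains), so the direction you feared, $R$ implies one of the three chains, is actually fine; it is the ``easy'' converse that breaks, since for $x<y<z$ the relation $R$ holds if, and only if, $z<x+u$, and $z\not\geq u$ does not yield this outside the linearly ordered case. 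Note also that your proposed patch --- pairwise comparability of the representatives, or non-isolation in the sense of Definition \ref{def316} --- does not suffice: the $x,y,z$ above are pairwise comparable, while in that example every nonzero element of $C$ is isolated and $R=\emptyset$. The lemma is correct when $G$ is linearly ordered, where $z\not\geq u$ does give $z<u\leq x+u$, and the paper's real target, Theorem \ref{prop38}, survives in general because $R$ can be interpreted in $(G,\ZZ u)$ by the $L_{loZu}$-formula $\exists m\,\exists m'\,\bigl(Zu(m)\;\&\;Zu(m')\;\&\;x<y+m<z+m'<x+u\bigr)$ rather than by the three-chain disjunction.
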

\begin{proof} Since $u$ is a strong unit, for every $x\in G$ there exist integers 
$m$ and $n$ such that $m u\leq x <nu$, and we can assume that $m$ is maximal. 
Then $x-mu\in G_u$, hence the restriction of $\rho$ to $G_u$ is onto. 
Let $x$ and $y$ in $G_u$ such that $\rho(x)=\rho(y)$, then $x-y\in \ZZ u$. Hence there 
exists an integer $m$ such that $y=x+mu$, and without loss of generality we can assume that 
$m\geq 0$. If $m\geq 1$, then $y\geq x+u\geq u$: a contradiction. Hence $m=0$, and $y=x$. 
So the restriction of $\rho$ to $G_u$ is one-to-one. The remainder of the proof is 
straightforward using properties of subsection \ref{subsec21}.  
\end{proof}
\begin{Theorem}\label{prop38} 
Let $(G,u)$ be a unital partially ordered group, $C$ be the quotient group 
$C=G/\ZZ u$, $\rho$ be the canonical mapping from $G$ onto $C$ and 
$G_u=\{x\in G\mid x\geq 0 \;\&\; x \not\geq u\}$. 
Then: \\
$\bullet$ the p.c.o.\ group $C$, in the language $L_c$, is interpretable in 
$(G,\ZZ u)$ in the language $L_{loZu}$, \\ 
$\bullet$ if $(G',u')$ is a unital partially ordered group, 
then $(G,\ZZ u)\equiv (G',\ZZ u') \Rightarrow G/\ZZ u\equiv G'/\ZZ u'$ 
(the same holds with $\prec$ instead of $\equiv$). 
\end{Theorem}
\begin{proof} Follows from Lemma \ref{lem38} and properties of subsection \ref{subsec21}.  
\end{proof}
\indent  Now we focus on the sets of non-isolated elements of wound-round p.c.o.\ groups and 
of wound-rounds of lattices.  
\begin{Proposition}\label{rkm16} Let $(G,u)$ be a unital partially ordered group, 
$N=\{x\in\; ]0,u[\; \mid \exists y\in\; ]0,u[\; 
x<y\mbox{ or }y<x\}$, and $C$ be the wound-round p.c.o.\ group $G/\ZZ u$. 
Then, $(N,<)$ and $(A(C)\backslash\{0\},\leq_0)$ are isomorphic ordered sets. 
\end{Proposition}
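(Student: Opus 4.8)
The plan is to exhibit the order isomorphism explicitly: I would take the restriction $\varphi$ of the canonical map $\rho\colon G\to C=G/\ZZ u$ to $N$. The whole argument rests on one elementary observation, which I would record first: if $x\in\;]0,u[$ and $m\in\ZZ$ satisfy $0<x+mu<u$, then $m=0$. Indeed, if $m\geq 1$ then $mu\geq u$ together with $x>0$ forces $x+mu>u$, while if $m\leq -1$ then $mu\leq -u$ together with $x<u$ forces $x+mu<0$; in either case $0<x+mu<u$ fails. As immediate consequences, two elements of $]0,u[$ with the same $\rho$-image coincide (so $\varphi$ is injective), and no element of $]0,u[$ lies in $\ZZ u$ (so $\varphi(x)\neq 0$ for $x\in N$).

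Next I would check that $\varphi$ lands in $A(C)\setminus\{0\}$ and is order preserving. If $x,x'\in\;]0,u[$ with $x<x'$, then $0<x<x'<u$, so the shifts $n_2=n_3=0$ in the definition of $R$ from Proposition \ref{prop37} give $R(\rho(0),\rho(x),\rho(x'))$, that is $\rho(x)<_0\rho(x')$. Applying this to a witness $y\in\;]0,u[$ of the membership $x\in N$ shows that $\rho(x)$ is non-isolated, hence $\varphi(x)\in A(C)\setminus\{0\}$; and it gives at once the implication $x<x'\Rightarrow\varphi(x)<_0\varphi(x')$. Combined with injectivity this yields $x\leq x'\Rightarrow\varphi(x)\leq_0\varphi(x')$.

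For the reverse implication I would unwind the definition of $R$ once more. Suppose $x,x'\in\;]0,u[$ and $\rho(x)<_0\rho(x')$, i.e.\ $R(\rho(0),\rho(x),\rho(x'))$; then there are $m,n\in\ZZ$ with $0<x+mu<x'+nu<u$. Since $x\in\;]0,u[$ and $0<x+mu<u$, the observation forces $m=0$; since $x'\in\;]0,u[$ and $0<x'+nu<u$ (the lower bound coming from $0<x+mu<x'+nu$), it forces $n=0$. Hence $x<x'$, and $\varphi$ is an order embedding of $(N,<)$ into $(A(C)\setminus\{0\},\leq_0)$.

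It then remains to prove surjectivity. Given $c\in A(C)\setminus\{0\}$ there is $d\in C\setminus\{0\}$ with $c<_0 d$ or $d<_0 c$; treating the first case (the second being symmetric), $R(\rho(0),c,d)$ holds, so by Proposition \ref{prop37} one may pick representatives with $0<x<y<u$, $\rho(x)=c$ and $\rho(y)=d$. Then $x,y\in\;]0,u[$ and $x<y$, whence $x\in N$ and $\varphi(x)=c$. This makes $\varphi$ a bijection, and with the order-embedding property it is the required isomorphism of ordered sets. The only delicate point is the observation on vanishing shifts; everything else is a direct reading of the definition of $R$, and in fact the strong-unit hypothesis plays no role in this particular statement.
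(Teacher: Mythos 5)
Your proof is correct and follows essentially the same route as the paper: restricting the canonical epimorphism $\rho$ to $N$, with injectivity and order-reflection coming from the uniqueness of the integer shift (your vanishing-shift observation is exactly the content of part (1) of Proposition \ref{prop37}, which the paper cites), and surjectivity by unwinding Definition \ref{def316}. Your write-up is merely a more explicit version, in particular spelling out the order-reflecting direction that the paper leaves implicit, and your closing remark that the strong-unit hypothesis is not used is accurate.
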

\begin{proof} It follows from (1) of Proposition \ref{prop37} that the restriction of $\rho$
to $[0,u[$ is one-to-one. In particular, its restriction to $N$ is one-to-one. 
If $x\in N$, then there exists $y\in N$ such that $0<x<y$ or $0<y<x$. It follows that 
$\rho(x)<_0\rho(y)$ or $\rho(y)<_0\rho(x)$. In particular, $\rho(x)\in A(C)$, and 
$\rho$ is an homomorphism of ordered sets from $N$ to $A(C)$. Now, let $x\in G$ such that 
$\rho(x)\in A(C)$ and $x\notin \ZZ u$. Then, there exist $y\in G$ and integers 
$n$, $n'$ such that $0<x+nu<y+n'u<u$ or $0<y+n'u<x+nu<u$. In any case $x+nu \in N$. 
Since $\rho(x+nu)=\rho(x)$, it follows that $\rho$ is an isomorphism of ordered sets 
between $(N,\leq)$ and $(A(C)\backslash \{0\})$. 
\end{proof}
\indent  
We saw that an	MV-algebra $A$ is isomorphic to the subset $[0,u_A]$ of its Chang $\ell$-group 
$(G_A,u_A)$. We get a similar result in the case of wound-round $\ell$-groups. 
\begin{Corollary}\label{propap38} 
Let $(G,u)$ be a unital $\ell$-group, 
$C=G/\ZZ u$ and $\rho$ be the canonical mapping from $G$ onto $C$. 
We assume that $A(C)\neq \{0\}$. We add an element $1$ to $A(C)$ and we set 
$x<_01$ for every $x\in A(C)$. Then the ordered sets $([0,u],\leq)$ and 
$(A(C)\cup\{1\})$ are isomorphic. 
\end{Corollary}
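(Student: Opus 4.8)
The plan is to build the order isomorphism explicitly from the one already produced in Proposition \ref{rkm16}, after first using the lattice hypothesis to rule out isolated interior points. Keep the notation $N=\{x\in\,]0,u[\;\mid\;\exists y\in\,]0,u[,\ x<y\text{ or }y<x\}$ of Proposition \ref{rkm16}, so that $\rho$ restricts to an order isomorphism between $(N,<)$ and $(A(C)\setminus\{0\},\leq_0)$.

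First I would show that the hypothesis $A(C)\neq\{0\}$ forces $N=\,]0,u[$. Since $\rho|_N$ is a bijection onto $A(C)\setminus\{0\}$, the assumption $A(C)\neq\{0\}$ gives $N\neq\emptyset$; picking $x_0\in N$ produces a comparable pair in $]0,u[$, and Lemma \ref{lm21}(2) then guarantees that every $z\in\,]0,u[$ is comparable to some $z'\in\,]0,u[$, i.e.\ $z\in N$. Hence $N=\,]0,u[$, so $[0,u]=\{0\}\cup\,]0,u[\,\cup\{u\}$ has no isolated interior point. Equivalently, $A(C)\neq\{0\}$ places us in the last alternative of Lemma \ref{lm21}(1), the three degenerate finite cases all yielding $N=\emptyset$ and hence $A(C)=\{0\}$.

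Next I would define $\varphi\colon A(C)\cup\{1\}\to[0,u]$ by $\varphi(0)=0$, $\varphi(1)=u$, and $\varphi(a)=(\rho|_N)^{-1}(a)$ for $a\in A(C)\setminus\{0\}$. By the previous step $(\rho|_N)^{-1}$ maps $A(C)\setminus\{0\}$ onto $N=\,]0,u[$, so the image of $\varphi$ is $\{0\}\cup\,]0,u[\,\cup\{u\}=[0,u]$ and the three pieces are disjoint; thus $\varphi$ is a bijection. Note that $0$ and $u$ are respectively the least and greatest elements of $[0,u]$, while $0$ and $1$ are the least and greatest of $A(C)\cup\{1\}$ (recall $0\leq_0 a$ for all $a$ by the defining clause $y=0$ of $\leq_0$, and $a<_0 1$ for all $a$ by construction).

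Finally I would verify that $\varphi$ is an order isomorphism by cases. For $a,b\in A(C)\setminus\{0\}$ the equivalence $a\leq_0 b\Leftrightarrow\varphi(a)\leq\varphi(b)$ is exactly Proposition \ref{rkm16}. The remaining cases involve an endpoint: $0\leq_0 b$ and $0\leq\varphi(b)$ always hold, and $a\leq_0 1$ and $\varphi(a)\leq u$ always hold; conversely $a\leq_0 0$ forces $a=0$ (by strictness of $R$) and $1\leq b$ forces $b=1$, matching $\varphi(a)\leq 0\Leftrightarrow\varphi(a)=0$ and $u\leq\varphi(b)\Leftrightarrow\varphi(b)=u$ because $0$ and $u$ are the extreme elements of $[0,u]$. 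The only genuinely substantive step is the reduction $N=\,]0,u[$, which is where the $\ell$-group structure (through Lemma \ref{lm21}) is essential; everything else is a routine gluing of the two endpoints $0$ and $u$ onto the isomorphism of Proposition \ref{rkm16}.
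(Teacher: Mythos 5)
Your proof is correct and follows the paper's own route exactly: the paper likewise deduces from $A(C)\neq\{0\}$ a comparable pair in $]0,u[$, concludes $N=\,]0,u[$ by Lemma \ref{lm21}, and then cites Proposition \ref{rkm16}, with the endpoint gluing left implicit (you spell it out, which is fine). The only inaccuracy is your parenthetical aside: the case $[0,u]=\{0,u,x,u-x\}$ does \emph{not} always give $N=\emptyset$, since $x$ and $u-x$ may be comparable (e.g.\ $G=\ZZ$, $u=3$, where $[0,u]=\{0,1,2,3\}$ and $N=\{1,2\}$) --- the alternatives of Lemma \ref{lm21}(1) are not mutually exclusive --- but this aside plays no role in your main argument, which is airtight.
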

\begin{proof} By the definition of $A(C)$ and of $\leq_0$ in $C=G/\ZZ u$, 
if $A(C)\neq \{0\}$, then there is $x$, $y$ in $G$ such that $0<x<y<u$ or $0<y<x<u$. 
Hence, by Lemma \ref{lm21}, $N=]0,u[$. Therefore the result follows from 
Proposition \ref{rkm16}. 
\end{proof}
\indent  Note that if $C$ is the wound-round of a lattice, $C\simeq G/\ZZ u$, 
with $u>0$ a strong unit of $G$, then for every $x$, $y$ in 
$A(C)$, we have the following: \\
\indent  $\bullet$  $0<x\leq_0 y\Leftrightarrow 0>-y\leq_0 -x$, \\
\indent  $\bullet$ $x \wedge_0 y$ exists (the infimum of $x$ and $y$ in $(A(C),\leq_0)$), \\
\indent  $\bullet$ $0$ is the smallest element, \\
\indent  $\bullet$ $x\vee_0 y$ does not exist if, and only if, $(-x)\wedge_0 (-y)=0$, and if this holds, then 
for every $z\in A(C)$: $x<_0 z\Rightarrow y\not<_0 z$. \\
\indent  Furthemore, for every $x\in A(C)$ there exists a unique $g\in G$ such that $0\leq g<u$ 
and $\rho(g)=x$. Now, if $x\in C \backslash A(C)$, then, for every $y\in C$, 
$x\wedge_0 y=0$, and $x\vee_0 y$ does not exist. 
\begin{Remark}\label{rk321} 
Let $(G,u)$ be a unital $\ell$-group and $C=G/\ZZ u$. 
Assume that $A(C)$ is not trivial. 
By Corollary \ref{propap38} the ordered sets $[0,u[$ and $A(C)$ are 
isomorphic. Now, since $G$ is lattice-ordered, we know that it is generated by its positive 
elements. We saw in Lemma \ref{lm27} that every positive element of $G$ is a sum of 
elements of $[0,u]$. It follows that the subgroup generated by $A(C)$ is equal to $C$. 
\end{Remark} 
\indent  
In general, $A(C)$ is not a subgroup of $C$. Now, we show that $A(C)$ is partially closed under $+$.  
\begin{Proposition}\label{prop320} (sums of non-isolated elements). 
Let $C$ be a  wound-round p.c.o.\ group. 
For every $x$, $y$ in $A(C)$ 
we have that $y-x\in A(C)\Leftrightarrow x\leq_0 y \mbox{ or } y\leq_0 x$. It follows that 
$x+y\in A(C)\Leftrightarrow 
x\leq_0 -y \mbox{ or } y\leq_0- x$. 
\end{Proposition}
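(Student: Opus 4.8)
The plan is to reduce both assertions to the order isomorphism of Proposition \ref{rkm16}, which lets me replace each non-isolated element of $C$ by its unique representative in $]0,u[$ and then compute inside the partially ordered group $G$, where $C=G/\ZZ u$. The whole content is in the first equivalence $y-x\in A(C)\Leftrightarrow x\leq_0 y$ or $y\leq_0 x$; the ``it follows'' part is then a one-line substitution.

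First I would dispose of the degenerate cases of the first equivalence. If $x=y$ both sides hold, since $y-x=0\in A(C)$ and $x\leq_0 y$. If $x=0$ or $y=0$, then $y-x$ equals $y$ or $-x$, which lies in $A(C)$ by Remark \ref{rk38}, while the right-hand side holds because $0$ is the bottom element of $\leq_0$. So I may assume $x,y\in A(C)\backslash\{0\}$ with $x\neq y$, in which case $x\leq_0 y$ or $y\leq_0 x$ means $x<_0 y$ or $y<_0 x$. The implication $\Leftarrow$ is then immediate from Remarks \ref{rks36}: if $x<_0 y$ then $y-x\in A(C)\backslash\{0\}$, and if $y<_0 x$ the same gives $x-y\in A(C)\backslash\{0\}$, whence $y-x\in A(C)$ by Remark \ref{rk38}.

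For the converse I argue with representatives. By Proposition \ref{rkm16} the canonical map $\rho$ restricts to an order isomorphism from $N\subseteq\;]0,u[$ onto $(A(C)\backslash\{0\},\leq_0)$; let $a,b,c\in N$ be the representatives of $x,y$ and $y-x$, so that $0<a<u$, $0<b<u$, $0<c<u$, and $\rho(c)=\rho(b-a)$, whence $b-a-c\in\ZZ u$, say $b-a=c+ku$ with $k\in\ZZ$. Since $0<a,b<u$ gives $-u<b-a<u$ in the partially ordered group $G$, and $0<c<u$, the multiple $ku=(b-a)-c$ satisfies $-2u<ku<u$; because $u>0$ this forces $k\in\{-1,0\}$ (no linearity of $G$ is used). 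If $k=0$ then $b-a=c>0$, so $0<a<b<u$ and hence $R(\rho(0),\rho(a),\rho(b))$ by Proposition \ref{prop37}, i.e.\ $x<_0 y$. If $k=-1$ then $a-b=u-c$ with $0<u-c<u$, so $0<b<a<u$ and $y<_0 x$. In either case $x\leq_0 y$ or $y\leq_0 x$, which completes the first equivalence.

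Finally, the consequence follows by applying the proven equivalence to the pair $(-y,x)$: since $-y\in A(C)$ by Remark \ref{rk38} and $x-(-y)=x+y$, I obtain $x+y\in A(C)\Leftrightarrow x\leq_0 -y$ or $-y\leq_0 x$, which is precisely the form feeding into the hypothesis of Proposition \ref{n38}. The main obstacle is the converse implication of the first equivalence; the delicate point there is not the p.c.o.\ axioms but pinning the integer $k$ down to $\{-1,0\}$ from inequalities in a merely partially ordered $G$, and this is exactly where having all three representatives strictly between $0$ and $u$, together with $u>0$, is essential.
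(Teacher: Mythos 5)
Your proof is correct and follows essentially the same route as the paper's: the degenerate cases and the implication $\Leftarrow$ via Remarks \ref{rks36} and \ref{rk38} are handled identically, and your converse---choosing representatives $a,b,c$ in $]0,u[$ of $x$, $y$, $y-x$ via Proposition \ref{rkm16} and pinning the integer $k$ in $b-a=c+ku$ down to $\{-1,0\}$ using only the partial order and $u>0$---is the same computation as the paper's step ``there exists a unique $n$ with $0\leq h-g-nu<u$, hence $-u<h-g<0$ or $0\leq h-g<u$.'' One remark: the final form you derive, $x+y\in A(C)\Leftrightarrow (x\leq_0 -y \mbox{ or } -y\leq_0 x)$, is the correct one (it is exactly the hypothesis of Proposition \ref{n38} and the condition quoted in the remark following Theorem \ref{n44}), whereas the disjunct $y\leq_0 -x$ printed in the statement is a sign-transposition typo: by Remark \ref{rk38} it is equivalent to $x\leq_0 -y$, so the statement as literally printed would fail, e.g., in $(\RR\times\RR)/\ZZ(1,1)$ with the componentwise order, where $x=\rho((0.5,0.6))$, $y=\rho((0.8,0.7))$ give $x+y=\rho((0.3,0.3))\in A(C)$ with $-y<_0 x$ but neither $x\leq_0 -y$ nor $y\leq_0 -x$.
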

\begin{proof} Let $(G,u)$ be a unital partially ordered group 
such that $C\simeq G/\ZZ u$, partially cyclically ordered as in Proposition \ref{prop37}. 
If $x=y$ or $x=0$ or $y=0$, then the result is trivial. Let $x\neq y$ in $A(C)\backslash \{0\}$. 
If $x<_0 y$, then we have already seen that 
by Proposition \ref{lem34} we have that $y-x\in A(C)$. If $y<_0 x$, then $x-y\in A(C)$, and by 
Remark \ref{rk38} we have that $y-x\in A(C)$. \\ 
\indent  Now, assume that $y-x\in A(C)$, and let $g$, $h$ in $G$ such that $x=\rho(g)$ and 
$y=\rho(h)$. We know that we can assume that $0< g<u$ and $0< h<u$. Therefore: 
$-u<h-g<u$. Now, $\rho(h-g)=y-x\in A(C)$, hence there exists an unique integer $n$ such that 
$0\leq h-g-nu<u$. It follows that either $-u<h-g<0$ or $0\leq h-g<u$. If $-u<h-g<0$, then 
$h<g$, and since $0<h$ and $g<u$, we have that $y<_0 x$. If $0\leq h-g<u$, then 
$g<h$, and since $0<g$ and $h<u$, we have that $x<_0 y$. The other assertion follows easily.
\end{proof}
\subsection{$\ell$-c.o.\ groups.}
In \cite{Glu 83}, the lattice-cyclically-ordered groups are defined to be p.c.o.\ groups 
such that $\leq_0$ defines a structure of distributive lattice with first element. 
In the present paper we look at a larger class of groups. 
Indeed, we noticed after 
Definition \ref{def325} that in the case of wound-rounds of $\ell$-groups 
for any nonzero $x$ and $y$, $x\vee_0 y$ exists if, 
and only if, $(-x)\wedge_0 (-y)\neq 0$. This motivate the following definition. 
\begin{defis} An {\it $\ell$-c.o.\ group} is a p.c.o.\ group $C$ such that, for 
every $x$ and $y$ in $A(C)$, $x\wedge_0 y$ exists, and $0<_0x<_0y\Leftrightarrow 0<_0-y<_0-x$. \\
An {\it $\ell$c-homomorphism} is a c-homomorphism from a $\ell$-c.o. group $C$ to 
a $\ell$-c.o. group $C'$ such that for every $x$ and $y$ in $C$ we have that 
$f(x\wedge_0 y)=f(x)\wedge_0 f(y)$. 
\end{defis}
\indent  From the properties that we noticed after Corollary \ref{propap38}, it follows that 
the wound-round of an $\ell$-group is an $\ell$-c.o.\ group. However, the wound-round 
operation is not a functor from the category of $\ell$-groups to the category of 
$\ell$-c.o.\ groups. Indeed, let $G$ be the $\ell$-group $\RR\times\RR$, $u=(1,1)$, 
$G'=\RR$, $u'=1$, and $f:\; \RR\times\RR\rightarrow \RR$ be the natural projection onto 
the first component. Then, $f$ is an $\ell$-homomorphism, and $f(u)=u'$. Let $x=(\frac{1}{2},2)$ 
and $y=(\frac{1}{4},4)$. Both of $x$ and $y$ belong to $G_u\backslash [0,u[$. Hence, if 
$\rho$ (resp.\ $\rho'$) is the canonical epimorphism from $G$ onto $C=G/\ZZ u$ 
(resp.\ from $G'$ onto $C'=G'/\ZZ u'$), then $\rho(x)\notin A(C)$, 
$\rho(y)\notin A(C)$, so $\rho(x)\wedge_0 \rho(y)=\rho(0)$. Now, $f(x)\in [0,u'[$, $f(y)\in [0,u'[$ 
and $f(x)\wedge f(y)=\frac{1}{4}$. It follows that $\rho'(f(x))\wedge_0 \rho'(f(y))=\rho'(f(y))
\neq \rho'(0)$. Consequently, the c-homomorphism $\bar{f}:\; C\rightarrow C'$ induced by 
$f$ (see Proposition  \ref{funct}) is not an $\ell$-c-homomorphism. However, we have the following. 
\begin{Proposition} Let $(G,u)$ and $(G',u')$ be unital $\ell$-groups, 
$f$ be a one-to-one unital $\ell$-homomorphism from $(G,u)$ to $(G',u')$ and $\bar{f}$ be the 
c-homorphism defined in the proof of Proposition \ref{funct}. Then, 
$\bar{f}$ is an $\ell$-c-homomorphism from $C:=G/\ZZ u$ to $C':=G'/\ZZ u'$.
\end{Proposition}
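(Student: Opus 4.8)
The plan is to exploit the one feature that injectivity of $f$ adds to Proposition \ref{funct}: a one-to-one unital $\ell$-homomorphism is an \emph{order embedding}. Indeed $f$ preserves $\wedge$ by definition, and if $f(g)\leq f(h)$ then $f(g)=f(g)\wedge f(h)=f(g\wedge h)$, so injectivity forces $g=g\wedge h$, i.e.\ $g\leq h$; hence $g\leq h\Leftrightarrow f(g)\leq f(h)$. First I would record the two consequences actually used: for every $g\in G$, $g\leq u\Leftrightarrow f(g)\leq u'$ and $g\geq u\Leftrightarrow f(g)\geq u'$ (with $f(u)=u'$, injectivity upgrading $\leq$ to $<$ where needed). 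Since $C$ and $C'$ are wound-rounds of $\ell$-groups they are $\ell$-c.o.\ groups, so $\wedge_0$ is everywhere defined and it remains only to verify $\bar f(x\wedge_0 y)=\bar f(x)\wedge_0\bar f(y)$ for all $x,y\in C$.

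Next I would fix canonical representatives. By Lemma \ref{lem38}, $\rho$ restricts to a bijection $G_u\to C$ with $G_u=\{z\in G\mid z\geq 0,\ z\not\geq u\}$; pick $g,h\in G_u$ with $x=\rho(g)$, $y=\rho(h)$. The whole computation rests on one identity: for $a,b\in[0,u[$ one has $\rho(a)\wedge_0\rho(b)=\rho(a\wedge b)$. When $A(C)\neq\{0\}$ this follows from the order isomorphism $([0,u],\leq)\simeq A(C)\cup\{1\}$ of Corollary \ref{propap38}, together with the observation that any $\leq_0$-lower bound $z$ of two elements of $A(C)$ itself lies in $A(C)$ (if $z<_0\rho(a)$ then $R(0,z,\rho(a))$ makes $z$ non-isolated), so the infimum in $(C,\leq_0)$ coincides with that in $A(C)$. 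In the degenerate case $A(C)=\{0\}$, Lemma \ref{lm21} shows $\;]0,u[\;$ is a $\leq$-antichain, which forces $a\wedge b=0$ for $a,b\in\;]0,u[\;$ (otherwise $a\wedge b<a$ would give two comparable elements of $\;]0,u[\;$), so both sides vanish. The same identity of course holds verbatim in $C'$.

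Then I would split on the position of $g,h$ relative to $u$. If $g,h\in[0,u[$, the identity gives $x\wedge_0 y=\rho(g\wedge h)$; since $f$ is an order embedding we have $f(g),f(h)\in[0,u'[$, so the identity in $C'$ and $f(g\wedge h)=f(g)\wedge f(h)$ yield $\bar f(x)\wedge_0\bar f(y)=\rho'(f(g))\wedge_0\rho'(f(h))=\rho'(f(g)\wedge f(h))=\rho'(f(g\wedge h))=\bar f(x\wedge_0 y)$. If instead, say, $g\not\leq u$, then $g\in G_u\setminus[0,u[$, so $\rho(g)=x\notin A(C)$ (as $A(C)\subseteq\rho([0,u[)$ and $\rho|_{G_u}$ is injective); hence $x\wedge_0 y=0$ and $\bar f(x\wedge_0 y)=0$. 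The order embedding gives $f(g)\not\leq u'$, while $f(g)\geq 0$ and $f(g)\not\geq u'$, so $f(g)\in G'_{u'}\setminus[0,u'[$ is the canonical representative of $\bar f(x)$; therefore $\bar f(x)\notin A(C')$ and $\bar f(x)\wedge_0\bar f(y)=0$ too.

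The delicate step is exactly this last dichotomy, because it is where the non-injective example preceding the statement fails: there an element incomparable with $u$ (hence isolated in $C$) was sent below $u'$ and so became non-isolated in $C'$, wrecking the infimum identity. The entire point of injectivity is that the order embedding converts ``$g$ incomparable to $u$'' into ``$f(g)$ incomparable to $u'$'', so isolated elements stay isolated and both sides remain $0$ together. Beyond this, the only genuine bookkeeping is tracking canonical representatives through $\rho$ and $\rho'$ and dispatching the borderline case $A(C)=\{0\}$, where the antichain structure of $\;]0,u[\;$ guarantees the relevant meet is $0$ and is preserved by $f$.
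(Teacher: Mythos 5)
Your proof is correct, and its skeleton coincides with the paper's: pass to the canonical representatives in $G_u$ furnished by Lemma \ref{lem38}, show that $f$ preserves membership in $[0,u[$ \emph{in both directions}, and then split into the case where both representatives lie in $[0,u[$ (where the computation $\rho'(f(g))\wedge_0\rho'(f(h))=\rho'(f(g)\wedge f(h))=\rho'(f(g\wedge h))$ is the same in both texts) and the case where one representative is incomparable to the unit (where both sides reduce to $0$). Where you genuinely diverge is in the key step. The paper obtains the equivalence $x\in[0,u[\;\Leftrightarrow f(x)\in[0,u'[$ for $x\in G_u$ via good sequences: by Lemmas \ref{lm27} and \ref{lem28}, $x$ has a unique good sequence $x_1=x\wedge u$, $x_2=(x-x_1)\wedge u,\dots$, the $\ell$-homomorphism $f$ carries it to the good sequence of $f(x)$, and injectivity ensures $f(x_i)=0\Leftrightarrow x_i=0$, so $x\in[0,u[\;\Leftrightarrow n=1\Leftrightarrow f(x)\in[0,u'[$. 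You instead extract from injectivity the order-embedding property ($f(g)\leq f(h)\Rightarrow f(g)=f(g\wedge h)\Rightarrow g=g\wedge h$), giving $g\leq u\Leftrightarrow f(g)\leq u'$ and $g\geq u\Leftrightarrow f(g)\geq u'$ at once; this is more elementary, avoids the good-sequence machinery entirely, and makes transparent exactly what fails in the non-injective counterexample preceding the proposition (an element incomparable to $u$, hence isolated, being mapped below $u'$). Your write-up is also somewhat more scrupulous than the paper's on two side issues: you justify the identity $\rho(a)\wedge_0\rho(b)=\rho(a\wedge b)$ for $a,b\in[0,u[$ via Corollary \ref{propap38} together with the observation that every $\leq_0$-lower bound of an element of $A(C)$ again lies in $A(C)$ (so the infimum in $(C,\leq_0)$ agrees with the one transported from $[0,u]$), and you dispatch the degenerate case $A(C)=\{0\}$, where $]0,u[$ is an antichain and the relevant meets all vanish --- a case the paper silently bypasses when it asserts $\rho(x)\in A(C)\Leftrightarrow x\in[0,u[$, an equivalence that strictly requires $A(C)\neq\{0\}$. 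In short: same decomposition and same endgame, but a different and arguably cleaner key lemma, plus explicit coverage of a borderline case the paper leaves implicit.
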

\begin{proof} Let $\rho$ (resp.\ $\rho'$) be the canonical epimorphism 
from $G$ onto $C$ (resp.\ from $G'$ onto $C'$). We recall that 
$\bar{f}$ is defined by setting, for every $x\in G$, $\bar{f}(\rho(x))=\rho'(f(x))$. \\
\indent  First we let $x\in G_u$, and we prove that $\rho'(f(x))\in A(C')\Leftrightarrow \rho(x)\in A(C)$. \\
\indent  Recall that, since $x\in G_u$, we have that $\rho(x)\in A(C)\Leftrightarrow x\in [0,u[$. 
We saw in Lemmas \ref{lm27} and \ref{lem28} that there is a unique sequence $x_1,\dots,x_n$ such that 
$x=x_1+\cdots+x_n$, $x_1=x\wedge u$, $x_2=(x-x_1)\wedge u$, and so on. 
Since $f$ is a unital $\ell$-homomorphism, we have that  $f(x_1)=f(x)\wedge u'$, $f(x_2)=
(f(x)-f(x_1))\wedge u'$, and so on. Now, $f$ is one-to-one, so, for $1\leq i\leq n$, 
$f(x_i)= 0\Leftrightarrow x_i=0$. Hence $f(x_1), \dots, f(x_n)$ is the sequence associated with $f(x)$ 
as in Lemmas \ref{lm27} and \ref{lem28}. 
It follows that 
$$\rho(x)\in A(C)\Leftrightarrow x\in [0,u[\Leftrightarrow n=1\Leftrightarrow f(x)\in [0,u'[ 
\Leftrightarrow \rho'(f(x))\in A(C').$$
Let $x$, $y$ in $G_u$. If $x$ and $y$ belong to $[0,u[$, then $f(x)$ and $f(y)$ belong to 
$[0,u'[$ and $f(x\wedge y)=f(x)\wedge f(y)$. Therefore  
$$\bar{f}(\rho(x)\wedge_0 \rho(y))=\bar{f}(\rho(x\wedge y))=\rho'(f(x\wedge y))=
\rho'(f(x)\wedge f(y))=\rho'(f(x))\wedge_0 \rho'(f(y))=\bar{f}(\rho(x))\wedge_0\bar{f}(\rho(y)).$$
If $x\notin [0,u[$, then $\rho(x)\wedge_0 \rho(y)=0$. Now, we have proved that $f(x)\notin 
[0,u'[$, hence $\rho'(f(x))\wedge_0 \rho'(f(y))=0$. The case where $y\notin [0,u[$ is 
similar. 
\end{proof}
\subsection{Cyclically ordered groups elementarily equivalent to subgroups of $\UU$.}
In this subsection we list some results of \cite{LL 13}. 
\begin{defi}\label{def331} Let $C$ be a c.o.\ group. \\ 
1) $C$ is said to be {\it c-archimedean} if for every $x$ and $y$ in $C\backslash \{0\}$ 
there exists an 
integer $n >0$ such that $R(0,nx,y)$ does not hold (in other words, $y\leq_0 nx$, since 
$(C,\leq_0)$ is linearly ordered). \\
2) $C$ is said to be {\it discrete} if $(C,\leq_0)$ is a  discretely ordered set. \\
3) $C$ is said to be {\it c-regular} if for every integer $n\geq 2$ and every 
$0<_0x_1<_0\cdots<_0x_n$ in $C$ there exists $x\in C$ such that 
$x_1\leq_0 nx\leq_0 x_n$ and $x<_02x<_0\cdots <_0 (n-1)x<_0nx$. 
This is equivalent to saying that its unwound is a {\it regular} 
linearly ordered group, that is, for every $n\geq 2$ and every 
$0<x_1<\cdots<x_n$ in $uw(C)$ there exists $x\in uw(C)$ such that 
$x_1\leq nx\leq x_n$. \\
4) $C$ is said to be {\it pseudo-c-archimedean} if $C$ belongs to the elementary 
class generated by the c-archimedean c.o.\ groups. \\
5) $C$ is said to be {\it pseudofinite} if $C$ belongs to the elementary 
class generated by the finite c.o.\ groups. 
\end{defi}
\indent  
Note that $C$ is c-archimedean if, and only if, its unwound is archimedean, and 
$C$ is discrete if, and only if, its unwound is a discrete linearly-ordered group. 
\begin{notas} If $C$ is discrete, then 
the first positive element $\varepsilon_C$ of $C$ is definable, 
we can assume that it lies in the language. For a prime $p$, integers $n\in \NN^*$ and 
$k\in \{0,\dots,p^n-1\}$, we denote by $D_{p^n,k}$  the formula: 
$\exists x, \;R(0,x,2x,\dots, (p ^n-1)x)\& p^nx=k\varepsilon_C$. \end{notas}
\begin{defi} If $B$ is an abelian group and $p$ is a prime, then we define the $p$-th 
{\it prime invariant of Zakon} of $B$, denoted by $[p]B$, to be the maximum number 
of $p$-incongruent elements in $B$. In the infinite case, we set $[p]B=\infty$, 
without distinguishing between infinities of different cardinalities (see \cite{Za 61}). 
\end{defi}
\begin{Theorem}
1) A dense c.o.\ group is pseudo-c-archimedean 
if, and only if, it is c-regular. If this holds, then it is elementarily equivalent to some 
c-archimedean dense c.o.\ group.  \\ 
2) Any two dense c-regular c.o.\ groups are elementarily equivalent if, and only if, 
their torsion subgroups are isomorphic and they have the same family of prime invariants of 
Zakon. This in turn is equivalent to: their torsion subgroups are isomorphic and their unwounds have the 
same family of prime invariants of Zakon. 
\end{Theorem}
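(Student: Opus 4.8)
The plan is to reduce every assertion to the unwound linearly ordered group and then invoke the classical theory of regular ordered abelian groups. By Rieger's theorem (Theorem \ref{thrieg}) and the functorial correspondence (Corollary \ref{functco}), a dense c.o.\ group $C$ is $uw(C)/\ZZ u_C$ for a unique dense unital linearly ordered group $(uw(C),u_C)$, and $C$ is c-regular (resp.\ c-archimedean) exactly when $uw(C)$ is regular (resp.\ archimedean). Theorem \ref{prop38} supplies the transfer $(uw(C),\ZZ u_C)\equiv (uw(C'),\ZZ u_{C'}) \Rightarrow C\equiv C'$, which is the only direction needed for the hard implications; the converse interpretability is not required, because the invariants used below are themselves expressible in $L_c$.

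First I would dispose of Part 1. c-regularity is an $L_c$-axiom scheme $\{\phi_n\}_{n\geq 2}$, each $\phi_n$ a single first-order sentence. Since an archimedean ordered group is regular (classical), every c-archimedean c.o.\ group satisfies each $\phi_n$; hence so does every member of the elementary class they generate, so every pseudo-c-archimedean group is c-regular. For the converse I would not argue directly but deduce it from Part 2: given a dense c-regular $C$, I would exhibit a dense c-archimedean $C_0$ (a suitable subgroup of $\UU=\RR/\ZZ$, every positive element of an archimedean group being a strong unit) whose torsion subgroup and family of prime invariants coincide with those of $C$; Part 2 then yields $C\equiv C_0$, so $C$ is pseudo-c-archimedean and, in particular, elementarily equivalent to a dense c-archimedean group.

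The heart of the matter is Part 2. The implication "$C\equiv C'\Rightarrow$ same invariants" is routine: for each prime $p$ and each $k$, both "$C$ has an element of order exactly $p^k$" and "there exist $k$ pairwise $p$-incongruent elements" are first-order in $L_c$, so elementary equivalence forces the torsion subgroups to be isomorphic (each $p$-component being cyclic or Pr\"ufer, determined by the orders of its torsion elements) and forces $[p]C=[p]C'$ for all $p$. For the converse I would first prove the purely algebraic lemma relating the two families. Writing $L=uw(C)$, one has $C/pC = L/(pL+\ZZ u_C)$, and a short computation shows that the $p$-component of $Tor(C)$ is nontrivial exactly when $u_C\in pL$, in which case $[p]C=[p]L$, whereas if $u_C\notin pL$ then $\ZZ u_C\cap pL = p\ZZ u_C$ and $[p]C=[p]L/p$. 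Thus, once the torsion subgroups agree (equivalently, the divisibility type of the unit in the unwound agrees prime by prime), the families $\{[p]C\}$ and $\{[p]\,uw(C)\}$ determine each other, which is precisely the asserted equivalence of the two formulations.

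Finally, from equal torsion and equal $\{[p]\,uw(C)\}$ I would conclude $(uw(C),\ZZ u_C)\equiv (uw(C'),\ZZ u_{C'})$ and then invoke Theorem \ref{prop38}. Here the classical Robinson--Zakon theory gives that two dense regular ordered abelian groups with the same prime invariants $[p]$ are elementarily equivalent as ordered groups; the remaining work---and the step I expect to be the main obstacle---is to upgrade this to elementary equivalence of the \emph{unital} structures carrying the predicate $\ZZ u$. The extra data attached to the unit is exactly its divisibility type in the unwound, which is encoded by $Tor(C)$ and therefore matches; I would fold this into a back-and-forth (Ehrenfeucht--Fra\"iss\'e / Ax--Kochen--Ershov style) argument on the regular ordered groups that additionally respects the distinguished element $u$ and its congruence classes modulo each $p^k$. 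Controlling the unit simultaneously with the Szmielew-type invariants across the winding quotient is the delicate point; everything else is bookkeeping resting on the transfer results already established.
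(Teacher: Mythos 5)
You should first know that the paper contains no proof of this statement to compare against: the subsection opens with ``In this subsection we list some results of \cite{LL 13}'', and this theorem is imported verbatim from that reference. Judged on its own merits, your reduction is the natural one and is consistent with the machinery the paper sets up: Definition \ref{def331} already builds in that $C$ is c-regular iff $uw(C)$ is regular, the paper notes that $C$ is c-archimedean iff $uw(C)$ is archimedean, and Theorem \ref{prop38} gives the transfer from the unwound to the winding quotient. Your algebraic bookkeeping is also correct: the torsion subgroup of $C$ is $(\QQ u_C\cap uw(C))/\ZZ u_C$, a subgroup of $\QQ/\ZZ$, hence determined by which element orders occur (a first-order datum); and writing $L=uw(C)$, the dichotomy $[p]C=[p]L$ if $u_C\in pL$ versus $\ZZ u_C\cap pL=p\ZZ u_C$ and $[p]C=[p]L/p$ otherwise (both via B\'ezout and torsion-freeness of $L$) does yield the asserted equivalence of the two formulations in 2), and the ``routine'' directions of 1) and 2) are fine.

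The genuine gap is that the hard direction of 2) --- same torsion and same invariants $[p]\,uw(C)$ imply elementary equivalence --- is exactly the step you defer, and ``fold this into a back-and-forth'' is not an argument: controlling the unit is the whole content of the theorem beyond Robinson--Zakon. To discharge it you need something concrete, e.g.\ quantifier elimination for dense regular ordered groups in the language enriched by divisibility predicates $D_n$ (Weispfenning, \cite{We 81}), from which the $1$-type of a positive element $u$ reduces, via $mu\in p^kL\Leftrightarrow u\in p^{k-j}L$ for $m=p^jm'$ with $p\nmid m'$, to the single datum $\{p^k\mid u_C\in p^kL\}$ --- which is encoded by $Tor(C)$ and hence matches; alternatively a saturated-model back-and-forth carrying $u_C$ to $u_{C'}$. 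Two further points: (a) you aim at equivalence of the $L_{loZu}$-structures, which is harder than necessary and problematic (the predicate $\ZZ u$ is not obviously respected by a back-and-forth); in the linearly ordered case the constant suffices, since for representatives $x,y,z\in[0,u_C[$ one has $x+y-z\in\ZZ u_C\Leftrightarrow x+y-z\in\{0,u_C\}$, so $(uw(C),u_C)\equiv(uw(C'),u_{C'})$ in $L_{lou}$ already interprets $C\equiv C'$. (b) Your derivation of the converse in 1) from 2) silently relies on the same unresolved crux plus an unproved realization step: Zakon gives an archimedean dense group elementarily equivalent to $uw(C)$ as an ordered group, but you must actually exhibit a dense subgroup $C_0$ of $\UU$ whose torsion and invariants match those of $C$ (equivalently, place a unit of the prescribed divisibility type in a dense archimedean subgroup of $\RR$ with the prescribed $[p]$'s); you assert this construction without giving it, so as written 1) rests on two IOUs rather than one.
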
 
\begin{Theorem} 1) 
Any two non-c-archimedean c-regular discrete c.o.\ groups are
elementarily equivalent if, and only if, they satisfy the same formulas $D_{p^n,k}$. \\
2) A c.o.\ group is pseudofinite if, and only if, it is 
discrete and c-regular. \\ 
3) Let $U$ be a non-principal ultrafilter on $\NN^*$, $C$ be the ultraproduct of the 
c.o.\ groups $\ZZ/n\ZZ$, $p$ be a prime, $n\in \NN^*$ and 
$k\in \{0,\dots,p^n-1\}$. Then $C$ satisfies the formula $D_{p^n,k}$ if, and only if, 
$p^n\NN^* -k\in U$. 
\end{Theorem}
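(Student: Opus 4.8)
The overall strategy is to transport every statement to the unwound linearly ordered groups via Rieger's theorem (Theorem \ref{thrieg}) and the functoriality of the wound-round/unwound correspondence (Corollary \ref{functco}), and then to invoke the classical model theory of regular ordered abelian groups in the style of Zakon \cite{Za 61}. Throughout, write $G=uw(C)$ for the unwound of a c.o.\ group $C$, with least positive element $\varepsilon$ and strong unit $u_C$, so that $C\simeq G/\ZZ u_C$; recall that $C$ is discrete (resp.\ c-regular, resp.\ c-archimedean) exactly when $G$ is discrete (resp.\ regular, resp.\ archimedean), and that $G$ discrete and regular gives $G/p^nG\simeq \ZZ/p^n\ZZ$, generated by $\varepsilon$.

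For part 3 I would argue by the fundamental theorem on ultraproducts: since each $D_{p^n,k}$ is a single $L_c$-sentence once $\varepsilon_C$ is named, it holds in the ultraproduct $C$ iff $\{n'\in\NN^*\mid \ZZ/n'\ZZ\models D_{p^n,k}\}\in U$, so the task reduces to identifying this index set. In $\ZZ/n'\ZZ$ (with $\varepsilon=1$) a witness $x$ for $D_{p^n,k}$ has a representative $a\in[1,n'-1]$ with $(p^n-1)a<n'$ (this is the force of $R(0,x,\dots,(p^n-1)x)$: the multiples must not wrap) and $p^na\equiv k\pmod{n'}$; since $a\ge 1$ and $k<p^n$ the multiples wrap exactly once, whence $p^na=n'+k$. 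Therefore a witness exists iff $n'\equiv -k\pmod{p^n}$ and $n'>(p^n-1)k$, with $a=(n'+k)/p^n$. Thus $\{n'\mid \ZZ/n'\ZZ\models D_{p^n,k}\}$ differs from $p^n\NN^*-k$ by only finitely many elements, and since $U$ is nonprincipal it lies in $U$ iff $p^n\NN^*-k$ does. This is the one genuinely computational point, but it is elementary arithmetic.

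Part 1 is the crux. Passing to unwounds, reading $D_{p^n,k}$ back in $G$ shows it asserts the existence of $x$ with $0<x<2x<\cdots<(p^n-1)x<u_C$ and $p^nx=k\varepsilon+u_C$, i.e.\ $u_C\equiv -k\varepsilon\pmod{p^nG}$; so the formulas $D_{p^n,k}$ exactly record the residue of the strong unit $u_C$ in each $G/p^nG\simeq\ZZ/p^n\ZZ$. The plan is then to prove that, among non-c-archimedean regular discrete ordered groups with named $\varepsilon$ and $u_C$, this family of congruences is a \emph{complete} set of invariants, by a back-and-forth argument (or the quantifier elimination available for regular ordered groups): non-archimedeanness supplies the order-theoretic room to extend partial isomorphisms, while regularity makes the divisibility type of any tuple computable from the $D_{p^n,k}$. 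The main obstacle is precisely this completeness step---showing that no invariants beyond the congruence data encoded by the $D_{p^n,k}$ are needed---which is exactly where the non-c-archimedean hypothesis is essential (in the archimedean case $G\simeq\ZZ$ and cardinality intervenes) and where one leans on \cite{LL 13} and the Robinson--Zakon analysis.

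Finally, for part 2 the forward implication is soft: discreteness and each instance of c-regularity are expressible by $L_c$-sentences that are true in every finite c.o.\ group $\ZZ/n\ZZ$ (the unwound $\ZZ$ is discrete and regular), hence true in every member of the elementary class they generate. For the converse let $C$ be discrete and c-regular. If $C$ is also c-archimedean then $G=uw(C)$ is archimedean and discrete, so $G\simeq\ZZ$ and $C$ is a finite cyclic group, trivially pseudofinite. Otherwise $C$ is non-c-archimedean; read off its invariants $k_{p^n}\in\{0,\dots,p^n-1\}$ with $C\models D_{p^n,k_{p^n}}$. These residues form a compatible system (they describe one residue of $u_C$), so by the Chinese Remainder Theorem every finite subfamily of the progressions $p^n\NN^*-k_{p^n}$ has infinite intersection; together with the cofinite sets they have the finite intersection property and extend to a nonprincipal ultrafilter $U$ on $\NN^*$. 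By part 3 the ultraproduct $C'$ of the $\ZZ/n\ZZ$ along $U$ satisfies exactly the same $D_{p^n,k}$ as $C$, and $C'$ is non-c-archimedean (as $U$ is nonprincipal), discrete and c-regular; so part 1 yields $C\equiv C'$, whence $C$ is pseudofinite. The delicate point here is the simultaneous finite satisfiability and nonprincipality of $U$, which is the Chinese Remainder computation just described.
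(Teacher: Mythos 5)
The first thing to note is that the paper offers no proof of this theorem at all: the subsection opens with ``In this subsection we list some results of \cite{LL 13}'', and the statement is imported verbatim from that reference. So there is no internal argument to compare against, and the benchmark is whether your reconstruction stands on its own. Parts 3 and 2 of your proposal do. The arithmetic in part 3 is correct: the chain $R(0,x,\dots,(p^n-1)x)$ in $\ZZ/n'\ZZ$ forces the representative $a$ to satisfy $(p^n-1)a<n'$ (each step adds $a$ or $a-n'$, and a wrap produces a strict decrease), whence $k<p^na<2n'$ and $p^na=n'+k$; your resulting index set $\{n'\equiv -k \pmod{p^n},\ n'>(p^n-1)k\}$ differs from $p^n\NN^*-k$ only on the finitely many small $n'$ (where your ``wraps exactly once'' step tacitly needs $k<n'$), and nonprincipality of $U$ absorbs this, exactly as you say. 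Your part 2 is also sound, including the reduction of the c-archimedean case to $uw(C)\simeq\ZZ$, hence $C$ finite cyclic, and the CRT/finite-intersection-property construction of the ultrafilter. One small unproved assertion there: you ``read off'' the invariants $k_{p^n}$, but the existence of \emph{some} $k$ with $C\models D_{p^n,k}$ is not free --- it needs that $u_C$ is congruent to a multiple of $\varepsilon$ modulo $p^nuw(C)$ and that the resulting $g$ with $p^ng=u_C+k\varepsilon$ satisfies $g>k\varepsilon$; both follow from regularity plus non-archimedeanness (congruence-solvability as in $\ZZ$, and the archimedean-class comparison forcing $g$ large), but you should say so, since part 2's converse collapses without it.

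The genuine gap is part 1, and you have identified it yourself. Your translation of $D_{p^n,k}$ into the unwound is right (the alternative $p^ng=k\varepsilon$ is excluded in a discrete group because every element of $[0,k\varepsilon]$ is an integer multiple of $\varepsilon$), so the formulas do record the residues of $u_C$ modulo the subgroups $p^n\,uw(C)$. But the completeness of this invariant family --- that two non-c-archimedean regular discrete unwounds with named $\varepsilon$, $u_C$ satisfying the same congruences are elementarily equivalent as unital ordered groups, and that this transfers through the interpretation of $C$ in $(uw(C),\ZZ u_C)$ of Theorem \ref{prop38} --- is precisely the content of the theorem in \cite{LL 13}, so ``leaning on \cite{LL 13} and the Robinson--Zakon analysis'' at this point is citing the statement being proved. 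A self-contained argument would have to carry out the back-and-forth or quantifier-elimination step for regular discrete ordered groups in the language with the two distinguished constants (the delicate point being that the predicate $\ZZ u$, not just the constant $u$, must be handled), and since your part 2 quotes part 1, the gap propagates there as well. Relative to this paper, which proves nothing here either, your proposal is more detailed than its source; as a blind proof, part 1 remains the missing core.
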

\section{From MV-algebras to wound-rounds of lattices.}\label{section4}
\indent  The correspondence between MV-algebras and p.c.o.\ groups is defined as follows. 
Let $A$ be an MV-algebra and $(G_A,u_A)$ be its Chang $\ell$-group. 
We saw in Section \ref{section2} that $\Xi:\; A\mapsto (G_A,u_A)$ is a functor from the category 
of MV-algebras to the category of unital $\ell$-groups, 
where $(G_A,u_A)$ is the Chang $\ell$-group of $A$. Now, the wound-round 
functor  $\Theta$: $(G,u)\mapsto G/\ZZ u$ 
defined in Proposition \ref{funct} is a functor from the category of unital $\ell$-groups 
to the category of wound-rounds of lattices, together with the c-homomorphisms. 
So, this gives rise to a functor  $\Theta\Xi$ from the category of MV-algebras to the 
category of wound-rounds of lattices, together with the c-homomorphisms. \\ 
\indent  In this section,  we describe the correspondence between MV-algebras and wound-round of lattices.  
Then, we define the converse correspondence.
\begin{defi}{\bf Notation.} We denote by 
$C(A)$ the wound-round of lattice $G_A/\ZZ u_A$, and by $\rho$ the canonical epimorphism 
from $G_A$ onto $C(A)$, where, for $x\in G_A$, $\rho(x)\in C(A)$ is the class of $x$ modulo $\ZZ u_A$. 
Without loss of generality, we assume that $A \subset G_A$ and $1=u_A$, 
we denote by $\varphi$ the restriction of $\rho$ to  $[0,u_A[$. \end{defi} 
\subsection{Interpretability of $A$ in $C(A)$}
\indent  
Recall that $A(C(A))$ is the set of non-isolated elements of $C(A)$ (see Definition 
\ref{def316}). Assume that  $A\neq\{0,1\}$, $A\neq\{0,1,x\}$ and $A\neq\{0,1,x,\neg x\}$, 
for some $x$. Then by Corollary \ref{propap38}, $\varphi$ is an isomorphism of ordered sets 
between $([0,u_A[,\leq)$ and $(C(A),\leq_0)$. It follows that, for every $x$, $y$ in $[0,u_A[$, 
$\varphi(x\wedge y)=\varphi(x)\wedge_0 \varphi(y)$, and if $x\vee y<u_A$, then 
$\varphi(x\vee y)=\varphi(x)\vee_0 \varphi(y)$. \\ 
\indent  Note that if 
$A=\{0,1\}$, then $C(A)=\{0\}$. If $A=\{0,1,x\}$, then $C(A)\simeq\ZZ/2\ZZ$. If 
$A=\{0,1,x,\neg x\}$ is not an MV-chain, then $C(A)\simeq\ZZ/2\ZZ\times \ZZ/2\ZZ$, and 
in any case $A(C(A))=\{0\}$. If $A=\{0,1,x,\neg x\}$ is an MV-chain, then 
$C(A)\simeq\ZZ/4\ZZ$. 
In the following, we assume that $A\neq\{0,1\}$, $A\neq\{0,1,x\}$ and $A\neq\{0,1,x,\neg x\}$, 
for some $x$. 
\begin{Remark}
Let $x$ in $]0,u_A[$, since $\neg x=u_A-x$, we have that $\varphi(\neg x)=-\varphi(x)$. 
\end{Remark}
\begin{Proposition}\label{prop41} We add an element $\un$ to 
$A(C(A))$, and we set $\varphi(u_A)=\un$. For every $x\in [0,u_A[$ set 
$$\varphi(x)<_0\un\mbox{, }\neg \varphi(x) =
-\varphi(x)\mbox{ if }x\in ]0,1[\mbox{, }\neg \varphi(0)=\un\mbox{ and }\neg \un=\varphi(0).$$ 
Let $x$, $y$ in $[0,u_A[$, we have that $\rho(x\oplus y)=\varphi(x)\wedge_0 (\neg \varphi(y))
+\varphi(y)$. 
\end{Proposition}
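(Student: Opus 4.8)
The plan is to reduce the claimed identity to a single computation inside the Chang $\ell$-group $G_A$, exploiting two facts available before the statement: that $\varphi$ is an order isomorphism from $([0,u_A[,\leq)$ onto $(A(C(A)),\leq_0)$ satisfying $\varphi(x\wedge y)=\varphi(x)\wedge_0\varphi(y)$ for $x,y\in[0,u_A[$ (Corollary \ref{propap38} and the remarks following it), and that $\rho$ is a group homomorphism with $\varphi=\rho|_{[0,u_A[}$. The whole point is to pull $\neg\varphi(y)$ back to $G_A$, apply meet-preservation, and then recognise $x\oplus y=(x+y)\wedge u_A$ via distributivity of $+$ over $\wedge$.

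First I would dispose of the case $y=0$, where the definition sets $\neg\varphi(0)=\un$ rather than $-\varphi(0)$. Here $x\oplus 0=x$, so $\rho(x\oplus 0)=\varphi(x)$; on the right, $\un$ is by construction the greatest element of $A(C(A))\cup\{\un\}$, whence $\varphi(x)\wedge_0\un=\varphi(x)$ and therefore $\varphi(x)\wedge_0(\neg\varphi(0))+\varphi(0)=\varphi(x)+0=\varphi(x)$, matching the left-hand side.

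For the main case $y\in\,]0,u_A[$ I would first rewrite $\neg\varphi(y)$ as a value of $\varphi$. By definition $\neg\varphi(y)=-\varphi(y)=-\rho(y)=\rho(-y)=\rho(u_A-y)=\varphi(\neg y)$, the last two equalities using $u_A\in\ZZ u_A$ and $\neg y=u_A-y\in\,]0,u_A[$. Since $0\leq x\wedge\neg y\leq x<u_A$, the meet $x\wedge\neg y$ computed in $G_A$ lies in $[0,u_A[$, so meet-preservation of $\varphi$ applies and gives
$$\varphi(x)\wedge_0(\neg\varphi(y))=\varphi(x)\wedge_0\varphi(\neg y)=\varphi(x\wedge\neg y)=\rho(x\wedge\neg y).$$
Adding $\varphi(y)=\rho(y)$ and using that $\rho$ is a homomorphism yields $\rho\big((x\wedge\neg y)+y\big)$; note this argument already covers $x=0$, since then $x\wedge\neg y=0$.

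Finally I would identify $(x\wedge\neg y)+y$ with $x\oplus y$ using that in any $\ell$-group addition distributes over $\wedge$:
$$(x\wedge\neg y)+y=\big(x\wedge(u_A-y)\big)+y=(x+y)\wedge\big((u_A-y)+y\big)=(x+y)\wedge u_A=x\oplus y,$$
so that $\rho\big((x\wedge\neg y)+y\big)=\rho(x\oplus y)$, as required. I expect no deep obstacle here: the algebraic core is the one-line distributivity identity $(x\wedge\neg y)+y=x\oplus y$, valid whether or not $x+y\geq u_A$ (in the latter case both sides equal $u_A$ and $\rho$ sends them to $0$). The only genuine care is boundary bookkeeping — verifying that $\neg y$ and $x\wedge\neg y$ remain in $[0,u_A[$ so that the order-isomorphism properties of $\varphi$ apply, and treating $y=0$ separately because there $\neg\varphi(0)=\un$.
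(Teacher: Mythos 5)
Your proof is correct and follows essentially the same route as the paper's: both rest on the $\ell$-group identity $x\oplus y=(x\wedge(u_A-y))+y$ together with the meet-preservation of $\varphi$ on $[0,u_A[$, with $y=0$ set aside because $\neg\varphi(0)=\un$. The only (minor) difference is that the paper splits into the cases $x\wedge(u_A-y)+y<u_A$ (where $\varphi$ applies to the sum) and $x\wedge(u_A-y)+y=u_A$ (where both sides are checked to equal $0$), whereas you avoid this split by pushing everything through the globally defined homomorphism $\rho$ — legitimate since your bookkeeping confirms $x\wedge\neg y\in[0,u_A[$ — which is a mild streamlining of the same argument rather than a different method.
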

\begin{proof} We know that $x\oplus y=(x+y)\wedge u_A$, hence 
$x\oplus y= x\wedge (u_A-y)+y$. Assume that $x\wedge (u_A-y)+y<1$ and $y\neq 0$ (the case 
$y=0$ being trivial). Hence 
$$\begin{array}{rcl}
\rho(x\oplus y)&=&\varphi(x\oplus y)\\
&=&\varphi(x\wedge (u_A-y)+y)\\
&=&\varphi(x\wedge (u_A-y))+\varphi(y)\\
&=&\varphi(x)\wedge_0 \varphi(u_A-y)+\varphi(y)\\
&=&\varphi(x)\wedge_0 (\neg\varphi(y))+\varphi(y).\end{array}$$ 
If $x\wedge (u_A-y)+y=u_A$ i.e.\ $x\wedge (u_A-y)=u_A-y$, then $u_A-y\leq x$ and $y\neq 0$. It 
follows that $0<_0\varphi(u_A-y)=-\varphi(y)\leq_0\varphi(x)$ and 
$$\begin{array}{rcl}
\varphi(x)\wedge_0(\neg\varphi(y))+\varphi(y)&=&
\varphi(x)\wedge_0(-\varphi(y))+\varphi(y)\\
&=&-\varphi(y)+\varphi(y)\\
&=&0\\
&=&\rho(u_A)\\
&=&
\rho(x\oplus y).\end{array}$$ 
\end{proof}
\begin{Corollary} The MV-algebra $A$, in the language $L_{MV}$, is interpretable in 
the $L_{lo}$-structure $A(C(A))$. 
In particular, if $A$ and $A'$ are MV-algebras such that $A(C(A))\equiv A(C(A'))$, then 
$A\equiv A'$. The same holds with $\prec$ instead of $\equiv$. 
\end{Corollary}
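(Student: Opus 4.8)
The plan is to exhibit explicit $L_{lo}$-formulas that interpret each primitive of $L_{MV}$ in $A(C(A))$, using the order isomorphism $\varphi$ together with the multiplication formula already established in Proposition \ref{prop41}, and then to conclude by the Reduction Theorem \ref{th27}. First I would fix the coding map: under the standing assumptions on $A$, Corollary \ref{propap38} shows that the extension of $\varphi$ by $\varphi(u_A)=\un$ is an isomorphism of ordered sets from $([0,u_A],\leq)$ onto $(A(C(A))\cup\{\un\},\leq_0)$, where $\un$ is a top element adjoined to $A(C(A))$. This extended $\varphi$ is the one-to-one correspondence of the interpretability definition, with $S_1=A$ and domain $T_1=A(C(A))\cup\{\un\}$. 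I would record that $A(C(A))$ is closed under $-$ by Remark \ref{rk38}, that $\leq_0$ is $L_{lo}$-definable via $a\leq_0 b\Leftrightarrow a\wedge b=a$, and that every sum occurring below stays in $A(C(A))$ by Proposition \ref{prop320}.

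Next I would write the interpreting formulas. The constant $0$ is interpreted by $0$ and $1$ by $\un$; for $\neg$ I use $\neg a=-a$ when $a\in A(C(A))\setminus\{0\}$, $\neg 0=\un$ and $\neg\un=0$, exactly as set up in Proposition \ref{prop41}, which is a finite $L_{lo}$-case split. The order $\leq$, and hence $\wedge$ and $\vee$, transport through $\varphi$ to the lattice order $\leq_0$ on $A(C(A))\cup\{\un\}$ and are therefore definable from $\leq_0$ by least-upper-bound/greatest-lower-bound formulas. The crucial operation is $\oplus$: Proposition \ref{prop41} gives $\rho(x\oplus y)=\varphi(x)\wedge_0(\neg\varphi(y))+\varphi(y)$, so writing $a=\varphi(x)$ and $b=\varphi(y)$ I would define the interpreting formula by cases, setting the value to $\un$ when $b=\un$, or $a=\un$, or $-b\leq_0 a$ (this last condition detecting $x\oplus y=u_A$), to $a$ when $b=0$, and to $a\wedge(-b)+b$ otherwise.

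The main obstacle is precisely this top element. Because $\rho(u_A)=\rho(0)=0$, the canonical map collapses $0$ and $1$, so the raw equation of Proposition \ref{prop41} only computes $\rho(x\oplus y)\in A(C(A))$ and cannot by itself separate the outputs $x\oplus y=0$ and $x\oplus y=u_A$. The remedy is the adjoined point $\un$ together with the case condition $-b\leq_0 a$ that recognises saturation to $u_A$; the one genuine computation to carry out is checking that these formulas reproduce $\oplus$ and $\neg$ on all of $[0,u_A]$, including the boundary values $0$ and $u_A$, and this follows by unwinding Proposition \ref{prop41} and the definition $x\oplus y=(x+y)\wedge u_A$.

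Having assembled a uniform interpretation of $A$ in $A(C(A))$, I would apply the Reduction Theorem \ref{th27}, noting that the identical formulas interpret $A'$ in $A(C(A'))$, to obtain $A(C(A))\equiv A(C(A'))\Rightarrow A\equiv A'$, and likewise with $\prec$ in place of $\equiv$. The only caveat is that the domain carries the adjoined top $\un$; since adjoining a new greatest element is a uniform construction, $A(C(A))\equiv A(C(A'))$ does yield elementary equivalence of the extended structures, so the conclusion stands.
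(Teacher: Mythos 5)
Your proposal is correct and takes essentially the same route as the paper: the paper's proof likewise works on $A(C(A))\cup\{\un\}$, sets $\neg a=-a$ for $0\neq a\neq \un$, $\neg 0=\un$, $\neg \un=0$, defines $a\oplus b=a\wedge_0\neg b+b$ unless that value is $0$ with $(a,b)\neq(0,0)$ (in which case $a\oplus b=\un$, a clause equivalent to your saturation test $-b\leq_0 a$ when $b\neq 0$), and then applies Theorem \ref{th27}. One small repair: state your $\un$-case as ``$b\neq 0$ and $-b\leq_0 a$'' (or let the $b=0$ case take precedence), since $-0=0\leq_0 a$ holds for every $a$ and, read in your listed order of cases, would wrongly send $a\oplus 0$ to $\un$.
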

\begin{proof} For every $x$, $ y$ in $A(C(A))\cup\{ \un\}$ we set $\neg x= -x$ 
if $0\neq x\neq \un$, $\neg 0=\un$, $\neg \un=0$, and $x\oplus y=x\wedge_0\neg y+y$ if 
$x\wedge_0\neg y+y\neq 0$ or $x=y=0$, and we set $x\oplus y=\un$ otherwise. The remainder of the proof follows from Theorem \ref{th27}. 
\end{proof}
\subsection{MV-algebra associated with a p.c.o.\ group}
\begin{defi}{\bf Notation.}\label{notat45} Let $C$ be a p.c.o.\ group. We add an 
element $\un$ to $A(C)$ and we set, for every $x\in A(C)$, $x<_0\un$ and $\un+x=x+\un=x$. 
\end{defi}
\begin{defi}\label{def45} 
Let $C$ be a p.c.o.\ group. 
We will say that $A(C)$ {\it defines canonically an MV-algebra} if it satisfies the following.\\ 
1) For every $x$, $y$ in $A(C)\backslash \{0\}$ we have that $x<_0 y\Leftrightarrow -y<_0-x$. \\
2) $(A(C)\cup\{\un\},\leq_0)$ is a distributive lattice. \\
3) For every $x$, $y$ in $A(C)$, 
$x+y=x\wedge_0y+x\vee_0 y$. \\
4) For every $x$, $y$, $z$ in $A(C)\backslash \{0\}$, we have that 
$$x-y=(x\wedge_0(-z)+z)\wedge_0(-y)-(y\wedge_0(-z)+z)\wedge_0(-x).$$
We will denote by $\mathcal{AC}$ the class of p.c.o.\ groups $C$ such that $A(C)$ defines 
canonically an MV-algebra. 
\end{defi}
\indent  Note that by Conditions 1) and 2) the elements of $\mathcal{AC}$ are $\ell$-c.o.\ groups. \\
\indent  The aim of this subsection is to prove the following theorem. 
\begin{Theorem}\label{n44} 
Let $C\in \mathcal{AC}$. 
Set $\neg 0=\un$, $\neg \un=0$ and for $x\in A(C)\backslash\{0\}$ set $\neg x=-x$. For every $x$, $y$ 
in $A(C)\cup\{\un\}$ set \\
$x\oplus y= x\wedge_0 (\neg y)+y$ if $x\wedge_0 (\neg y)+y\neq 0$ or $x=y=0$, and \\
$x\oplus y=\un$ otherwise. \\
Then $A(C)\cup\{\un\}$ is an MV algebra with natural partial order $\leq_0$. 
\end{Theorem}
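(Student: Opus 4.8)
The plan is to verify the MV-algebra axioms MV1)–MV6) directly for the structure $(A(C)\cup\{\un\},\oplus,\neg,0)$, exploiting the lattice and group structure guaranteed by $C\in\mathcal{AC}$. The guiding principle is that $A(C)$ sits inside an $\ell$-c.o.\ group (by the remark after Definition \ref{def45}), so the operations behave formally like the MV-operations $x\oplus y=(x+y)\wedge u$, $\neg x=u-x$ coming from a unital $\ell$-group, with $\un$ playing the role of the strong unit $u$ and $\wedge_0$ playing the role of $\wedge$. Conditions 1)–4) of Definition \ref{def45} are precisely the algebraic identities needed to make this analogy go through without an ambient group in which $x+y$ and $u$ literally live.

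First I would treat the easy axioms. MV3) ($x\oplus 0=x$) follows from $\neg 0=\un$ together with $x\wedge_0\un=x$ (since $x\leq_0\un$ for all $x\in A(C)$) and $x+\un=x$ from Notation \ref{notat45}. MV4) ($\neg\neg x=x$) is immediate from $\neg 0=\un$, $\neg\un=0$, and $\neg x=-x$ on $A(C)\backslash\{0\}$. MV5) ($x\oplus\neg 0=\neg 0$, i.e.\ $x\oplus\un=\un$) requires checking $x\wedge_0(\neg\un)+\un=x\wedge_0 0+\un=0+\un=\un$, using that $0$ is the least element so $x\wedge_0 0=0$. MV2) (commutativity of $\oplus$) is the place where Condition 3) enters: one rewrites $x\oplus y=x\wedge_0(\neg y)+y$ and must see it is symmetric in $x,y$; here the identity $a+b=a\wedge_0 b+a\vee_0 b$ from Condition 3) and the order-reversal Condition 1) let me convert $x\wedge_0(-y)+y$ into an expression symmetric under swapping, after the standard MV-identity $x\vee y=\neg(\neg x\oplus y)\oplus y$ is recognized as built into the definition.

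The substantive work is MV1) (associativity of $\oplus$) and MV6) (which, as the excerpt notes, amounts to $x\vee y=y\vee x$ where $x\vee y=\neg(\neg x\oplus y)\oplus y$). For these I would unwind the definition so that $\oplus$ and $\odot$ (defined as $\neg(\neg x\oplus\neg y)$) satisfy $x+y=x\oplus y+x\odot y$ at the level of $A(C)$, mirroring Remark \ref{rk210}; Condition 3) supplies exactly this additive decomposition. Then associativity and the $\vee$-symmetry reduce to the distributivity of the lattice $(A(C)\cup\{\un\},\leq_0)$ (Condition 2)) combined with compatibility of $+$ with $\wedge_0,\vee_0$. Condition 4) is the delicate ingredient: it encodes, purely in terms of the order $\leq_0$ and the partial addition, the relation that in a genuine $\ell$-group would read $x-y=(x\wedge u)\wedge\cdots$; it is what allows one to compute $\neg(\neg x\oplus y)$ correctly when sums leave $A(C)$, i.e.\ when $x\vee_0 y$ fails to exist.

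\textbf{The main obstacle} I anticipate is the bookkeeping of the boundary cases where $x\wedge_0(\neg y)+y$ either equals $0$ (forcing the value $\un$ by the second clause of the definition) or where one of the arguments is $0$ or $\un$. Because $A(C)$ is not closed under $+$ and suprema need not exist, every associativity computation must be split according to whether the relevant pairwise sums stay inside $A(C)$, and in each branch Condition 4) is invoked to keep the two sides equal. I expect that the cleanest route is to prove once and for all that the map $\varphi$ of Corollary \ref{propap38} transports the MV-structure of some $[0,u]$ onto $A(C)\cup\{\un\}$ whenever $C$ is a wound-round of a lattice, and then to argue that Conditions 1)–4) are strong enough to force the same identities in the abstract case; verifying that Condition 4) genuinely captures subtraction in the non-supremum case is where the real care is needed.
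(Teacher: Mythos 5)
Your overall skeleton --- direct verification of MV1)--MV6), with Condition 3) of Definition \ref{def45} driving commutativity, Condition 4) driving associativity, and MV6) reduced to proving $\neg(\neg x\oplus y)\oplus y=x\vee_0y$ --- coincides with the paper's proof. But two of your stated mechanisms would fail as written. First, associativity does not reduce to distributivity of the lattice together with ``compatibility of $+$ with $\wedge_0,\vee_0$'': no such compatibility is available in a p.c.o.\ group, where addition is not monotone for $\leq_0$ (Proposition \ref{n38} gives only a restricted form of it, under extra hypotheses). The paper's computation instead first uses MV2) to rewrite $x\oplus(y\oplus z)$ as $(z\oplus y)\oplus x$ and then observes that $(x\oplus y)\oplus z-(z\oplus y)\oplus x-(x-z)$ is literally the expression in Condition 4) (with the roles of $y$ and $z$ exchanged); so 4) is consumed in the \emph{interior} case of MV1), not --- as you suggest --- in the cases where sums leave $A(C)$. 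Those boundary cases are governed by two small lemmas your ``bookkeeping'' needs but never isolates: (i) if $x\wedge_0(\neg y)\neq\neg y$ then $y<_0x\wedge_0(\neg y)+y$, so the defining expression stays in $A(C)$ and $\oplus$ is well defined (Lemma \ref{lemav410}); (ii) $x\oplus y=\un\Leftrightarrow(-y\leq_0x\ \&\ (x,y)\neq(0,0))$ (Lemma \ref{lmav412}), which makes all the $\un$-branches uniform, e.g.\ in showing $x\oplus(y\oplus z)=\un$ whenever $x\oplus y=\un$. A minor but symptomatic slip: by Notation \ref{notat45} one has $0+\un=0$, not $\un$, so in MV5) the expression $x\wedge_0(\neg\un)+\un$ equals $0$ and the value $\un$ comes from the \emph{second} clause of the definition of $\oplus$, not from your computation $0+\un=\un$.

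The more serious defect is your proposed ``cleanest route'': transporting the MV-structure of some $[0,u]$ through the map $\varphi$ of Corollary \ref{propap38} and then ``arguing that Conditions 1)--4) are strong enough to force the same identities in the abstract case.'' Corollary \ref{propap38} applies only when $C$ is the wound-round of an $\ell$-group; an abstract $C\in\mathcal{AC}$ comes with no ambient unital group, and the content of Theorem \ref{n44} is precisely that 1)--4) suffice without one --- so the second half of your sentence is a restatement of the theorem, not an argument. The paper's order of business is the reverse: Theorem \ref{prop321} shows wound-rounds of lattices lie in $\mathcal{AC}$, and Theorem \ref{thm416} then uses the MV-algebra $A(C)\cup\{\un\}$ (hence the present theorem) to characterize when $\langle A(C)\rangle$ is a wound-round of a lattice, and even that only for the subgroup generated by $A(C)$ and under additional first-order axioms. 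So the transport route is circular; only the direct verification sketched in your first part can work, and it goes through once supplemented by the two closure/characterization lemmas above.
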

\begin{Corollary}\label{cor411} Let $C$ be a p.c.o.\ group.\\
$\bullet$ $A(C)$ defining canonically an MV-algebra is expressible by countably many first-order 
formulas of the language $L_c$.  \\
$\bullet$ If $A(C)$ defines canonically an MV-algebra, then the MV-algebra $A(C)\cup\{\un\}$ 
defined in Theorem \ref{n44} is interpretable in $C\cup\{\un\}$, where $\un$ is a new element. 
\end{Corollary}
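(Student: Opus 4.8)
The plan is to reduce both assertions to definability facts in the language $L_c=(0,+,-,R)$, enriched for the second part by a constant naming the new element $\un$. I would first isolate the definable ingredients that both parts share. The relation $\leq_0$ is quantifier-free definable from $R$, since $y\leq_0 z$ holds iff $R(0,y,z)\vee y=z\vee y=0$, and $y<_0 z$ abbreviates $y\leq_0 z\wedge y\neq z$; membership in $A(C)$ is $\Sigma_1$-definable by
\[
x\in A(C)\ \Longleftrightarrow\ x=0\ \vee\ \exists y\,\bigl(R(0,x,y)\vee R(0,y,x)\bigr).
\]
Moreover, whenever an infimum $x\wedge_0 y$ exists for $\leq_0$ it is the unique greatest lower bound, so a subformula ``$m=x\wedge_0 y$'' may be replaced by $m\leq_0 x\wedge m\leq_0 y\wedge\forall w\,((w\leq_0 x\wedge w\leq_0 y)\to w\leq_0 m)$, and dually for $x\vee_0 y$. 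Finally, by Notation \ref{notat45} the formal element $\un$ is simply a top element for $\leq_0$ and an absorbing element for $+$ on $A(C)\cup\{\un\}$; hence every occurrence of $\un$ can be removed by a finite case split, ``$x\vee_0 y=\un$'' being rendered as ``$x,y$ have no upper bound in $A(C)$'', and so on.

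For the first part I would translate each clause of Definition \ref{def45} into $L_c$ after relativizing all quantifiers to $A(C)$ and eliminating $\un$, $\wedge_0$, $\vee_0$ as above. Clause 1) is a single universally quantified $L_c$-sentence. Clause 2) becomes the conjunction of: existence of binary infima in $A(C)$; existence of binary suprema in $A(C)\cup\{\un\}$, i.e.\ for all $x,y\in A(C)$ either there is no upper bound in $A(C)$ (the supremum being then $\un$) or there is a least one; and distributivity, a bounded sentence. Clauses 3) and 4) are again universal $L_c$-sentences using the now-definable $\wedge_0,\vee_0,+,-$. This produces finitely many $L_c$-sentences, a fortiori countably many, whose conjunction over a p.c.o.\ group $C$ holds exactly when $A(C)$ defines canonically an MV-algebra.

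For the second part I would exhibit the interpretation required by the definition recalled in Subsection \ref{subsec21}, taking for $T_1$ the definable subset $A(C)\cup\{\un\}$ of $C\cup\{\un\}$ and for the coordinate map the identity. It then suffices to define each primitive of $L_{MV}$ on $T_1$ by an $L_c$-formula with the constant $\un$: here $0$ is the group identity and $1=\neg 0=\un$; the order $\leq_0$ is rendered as $x=0\vee x=y\vee y=\un\vee R(0,x,y)$; negation is the case split $\neg 0=\un$, $\neg\un=0$, $\neg x=-x$ otherwise; and $z=x\oplus y$ is the direct transcription of Theorem \ref{n44}, namely $z=x\wedge_0(\neg y)+y$ whenever that value exists and is nonzero, or $x=y=z=0$, and $z=\un$ in the remaining case, where $x\wedge_0(\neg y)$ is expanded through its greatest-lower-bound formula and $\neg y$ through the case split. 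Since $\leq$, $\wedge$, $\vee$ and $\odot$ are $L_{MV}$-definable from $0,\oplus,\neg$, they are covered automatically, and Theorem \ref{n44} guarantees that $A(C)\cup\{\un\}$ with these operations is an MV-algebra; hence it is interpretable in $C\cup\{\un\}$.

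The crux in both parts is that $\wedge_0$, $\vee_0$ and $\un$ are not primitive symbols: the whole argument rests on replacing them by defining formulas that provably pick out a unique, genuinely existing element. This is exactly where the hypothesis bites in the second part, since clause 2) of Definition \ref{def45} is what guarantees that the meet $x\wedge_0(\neg y)$ occurring in $\oplus$ exists and is unique, so that the transcription of $\oplus$ really defines a function; in the first part the same point reappears as the need to phrase ``distributive lattice'' without the formal top $\un$, correctly encoding both the existence of binary meets and that of binary joins. The remaining work is the routine but careful bookkeeping of the $\un$-cases so that no quantifier strays outside $A(C)\cup\{\un\}$.
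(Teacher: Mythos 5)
Your argument is correct and is essentially the proof the paper leaves implicit (the Corollary is stated without a proof text): relativize each clause of Definition \ref{def45} to the $\Sigma_1$-definable set $A(C)$, eliminate $\wedge_0$ and $\vee_0$ through their greatest-lower-bound and least-upper-bound defining formulas, handle $\un$ by finite case splits, and then read the interpretation of $(0,\oplus,\neg)$ directly off Theorem \ref{n44} in the sense of Subsection \ref{subsec21}, the derived symbols $\leq,\wedge,\vee,\odot$ being $L_{MV}$-definable. One slip should be fixed: Notation \ref{notat45} makes $\un$ a \emph{neutral} element for $+$ (it sets $\un+x=x+\un=x$), not an absorbing one as you state. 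This matters at exactly one point, the transcription of clause 3) of Definition \ref{def45}: when $x\vee_0 y=\un$ the identity $x+y=x\wedge_0 y+x\vee_0 y$ must be rendered as $x+y=x\wedge_0 y$ (this is precisely how it is used in the proof of Theorem \ref{prop321}), whereas the absorbing convention would yield the never-satisfiable $x+y=\un$ and would wrongly rule out the case where the join is $\un$. Since your actual elimination scheme (``$x\vee_0 y=\un$'' means ``no upper bound in $A(C)$'') is the right one, the repair is immediate, and your remark that finitely many $L_c$-sentences suffice is a mild sharpening of the paper's ``countably many''.
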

\begin{Remark} Let $A$ be an MV-algebra such that there exist $x<y$ in $]0,1[$. 
Then the MV-algebra $A(C(A))\cup \{\un\}$ (together with the operations defined in 
Theorem \ref{n44}) is isomorphic to $A$. 
\end{Remark}
\begin{proof} Since $]0,1[$ contains $x<y$, we deduce from 
Corollary \ref{cor22} that $A(C(A))$ is nonempty. By Corollary \ref{propap38}, the 
canonical epimorphism $\rho$ from the Chang $\ell$-group $(G_A,u_A)$ of $A$ induces an 
isomorphism $\varphi$ 
between the lattices $[0,u_A]$ and $A(C(A))\cup \{\un\}$. Now, for $g$, $h$ in $]0,u_A[$, 
$\varphi(g)\wedge(-\varphi(h))+\varphi(h)=\varphi(g)\wedge\varphi(u_A-h)+\varphi(h)=
\varphi((g+h)\wedge u_A)$. Hence $\varphi(g)\wedge(\neg\varphi(h))+\varphi(h)=0$ if, 
and only if, either $g+h\geq u_A$ or $g+h=0$. Consequently, by Proposition \ref{prop41}, 
$\varphi$ is an isomorphism of MV-algebras. 
\end{proof}
\indent  The proof of Theorem \ref{n44} is based on the following lemmas. 
\begin{Lemma}\label{lemav410}. Let $C$ be an $\ell$-c.o.\ group and  $x$, $y$ 
in $A(C)\backslash\{0\}$. If $-y\neq x\wedge_0(-y)$, then 
$y<_0 x\wedge_0(-y)+y$. In particular, $x\wedge_0(-y)+y$ belongs to $A(C)$. 
\end{Lemma}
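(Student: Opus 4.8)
The plan is to reduce the statement to a single application of compatibility and cyclicity of $R$. Write $m:=x\wedge_0(-y)$. First I would check that $-y\in A(C)\backslash\{0\}$, so that the infimum $m$ is meaningful: since $y\in A(C)\backslash\{0\}$ there is a nonzero $w$ with $w<_0 y$ or $y<_0 w$; in the first subcase $0<_0 w<_0 y$, and the defining equivalence of an $\ell$-c.o.\ group, $0<_0 a<_0 b\Leftrightarrow 0<_0 -b<_0 -a$, turns this into $0<_0 -y<_0 -w$, whence $-y\in A(C)$, and the second subcase is symmetric. As $C$ is an $\ell$-c.o.\ group, $m=x\wedge_0(-y)$ therefore exists and lies in $A(C)$. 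By definition of the infimum $m\leq_0 -y$, and since $0$ is the smallest element for $\leq_0$ we also have $0\leq_0 m$.

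Next I would treat the principal case $m\neq 0$. The inequality $m\leq_0 -y$ holds by definition of the infimum, and because the hypothesis gives $m\neq -y$ while the case assumption gives $m\neq 0$, the relation $m\leq_0 -y$ can only hold through the defining clause $R(0,m,-y)$. The key move is then to translate by $y$: compatibility (axiom (4) of Definition \ref{def31}) yields $R(y,m+y,0)$, and two applications of cyclicity (axiom (2)) rewrite this as $R(0,y,m+y)$, that is, $y<_0 m+y$. This is exactly the asserted inequality. Since $y\neq 0$ and $y<_0 m+y$, the element $m+y$ is comparable to a nonzero element, so by Definition \ref{def316} it is non-isolated, i.e.\ $m+y\in A(C)$, which yields the ``in particular'' clause.

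The step I expect to be the main obstacle is the degenerate case $m=0$, which must be isolated from the outset. There the translation argument is not available, and one has $m+y=y$, so the substantive content reduces to the membership $m+y=y\in A(C)$, which is immediate from $y\in A(C)$. Thus the strict inequality $y<_0 m+y$ is genuinely a statement about the case $m\neq 0$, where the compatibility/cyclicity computation applies verbatim. For the use of this lemma in Theorem \ref{n44} (in the analysis of $x\oplus y=x\wedge_0(\neg y)+y$), I would check that the configurations of interest force $m\neq 0$, so that the strict inequality is available precisely when it is needed; once that is confirmed, the remaining manipulations are purely formal consequences of the axioms of Definition \ref{def31}.
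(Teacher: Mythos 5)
Your argument is correct, and in the principal case it takes a genuinely different (and more economical) route than the paper. The paper proves $y<_0 x\wedge_0(-y)+y$ by a chain of three equivalences: the $\ell$-c.o.\ inversion property $0<_0 a<_0 b\Leftrightarrow 0<_0 -b<_0 -a$ to pass from $x\wedge_0(-y)<_0 -y$ to $y<_0 -(x\wedge_0(-y))$, then Proposition \ref{lem34} to get $-(x\wedge_0(-y))-y<_0 -y$, then the inversion property once more. You instead observe that (given $x\wedge_0(-y)\neq 0$ and $\neq -y$) the relation $x\wedge_0(-y)\leq_0 -y$ must come from $R(0,x\wedge_0(-y),-y)$, translate by $y$ using compatibility, and cycle: one application of axiom (4) and two of axiom (2) of Definition \ref{def31}. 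This uses only the p.c.o.\ axioms for the key step, reserving the inversion property for your preliminary check that $-y\in A(C)$ — a point the paper leaves implicit, although it is needed for the infimum $x\wedge_0(-y)$ to be guaranteed to exist in an $\ell$-c.o.\ group. Your derivation of the membership clause from Definition \ref{def316} matches the paper's (unstated) "follows easily".

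More importantly, your isolation of the degenerate case $x\wedge_0(-y)=0$ catches a real oversight. The paper's first appeal to the inversion property tacitly requires the full chain $0<_0 x\wedge_0(-y)<_0 -y$, which fails when the infimum is $0$; and in that case the lemma's strict conclusion is literally false, since it would read $y<_0 y$, while the hypothesis $-y\neq x\wedge_0(-y)$ holds automatically because $y\neq 0$. This situation does occur in $\ell$-c.o.\ groups: e.g.\ in the wound-round $C=(\ZZ\times\ZZ)/\ZZ(2,2)$ (an element of $\mathcal{AC}$ by Theorem \ref{prop321}), with $x=\rho((1,0))$ and $-y=\rho((0,1))$ one has $x\wedge_0(-y)=0$. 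As you note, the ``in particular'' clause survives there trivially, since $x\wedge_0(-y)+y=y\in A(C)$. One small refinement to your last paragraph: in the proof of Theorem \ref{n44} the degenerate configuration is not excluded (it arises exactly when $x\oplus y=y$), but the lemma is invoked only for the membership $x\oplus y\in A(C)$ and for the non-strict inequality $y\leq_0 y\oplus z$, both of which hold with equality in that case — so your corrected two-case version of the lemma is precisely what the theorem needs, and your case split is a necessary repair of the statement rather than mere caution.
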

\begin{proof} Since $-y\neq x\wedge_0(-y)$, we have that $x\wedge_0(-y)<_0-y$. 
By hypothesis, this is equivalent to $y<_0-(x\wedge_0(-y))$. 
By Proposition \ref{lem34}, this in turn is equivalent to $-x\wedge_0(-y)-y<_0-y$. 
By hypothesis, this in turn is equivalent to $y<_0 x\wedge_0(-y)+y$. 
The last assertion follows easily. 
\end{proof}
\begin{Lemma}\label{n39} 
Let $C$ be a p.c.o.\ group such that 
for every $x$, $y$ in $A(C)\backslash \{0\}$ we have that $x<_0 y\Leftrightarrow -y<_0-x$. Let
$x$, $y$ in $A(C)$ such that the infimum $z=x\wedge_0y$ of $x$ and $y$ in 
$(A(C),\leq_0)$ exists. Then $x-z$ and $y-z$ belong to $A(C)$, the infimum 
$(x-z)\wedge_0(y-z)$ exists and is equal to $0$. 
\end{Lemma}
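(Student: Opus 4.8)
The plan is to reduce the equality $(x-z)\wedge_0(y-z)=0$ to the fact that $z=x\wedge_0 y$ is the \emph{greatest} lower bound of $\{x,y\}$, by translating lower bounds by $+z$. Only the group law and the partial order $\leq_0$ are involved, so the added element $\un$ plays no role here.

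First I would dispose of the degenerate cases. Recall that $0$ is the smallest element of $(C,\leq_0)$, since $0\leq_0 t$ for every $t$. If $z=0$ then $x-z=x$, $y-z=y$ and $(x-z)\wedge_0(y-z)=x\wedge_0 y=0$; if $z=x$ then $x-z=0$, so $(x-z)\wedge_0(y-z)=0\wedge_0(y-z)=0$ (and $y-z\in A(C)$ by Remark \ref{rks36}(1)); the case $z=y$ is symmetric. So I may assume $z<_0 x$, $z<_0 y$ and $z\neq 0$, whence $x\neq y$. In this case Remark \ref{rks36}(1) gives $a:=x-z\in A(C)\setminus\{0\}$ and $b:=y-z\in A(C)\setminus\{0\}$, which already proves the first assertion; moreover the implication $p<_0 q\Rightarrow q-p<_0 -p$ of Proposition \ref{lem34}, applied with $p=z$, yields $a<_0 -z$ and $b<_0 -z$.

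To show that the infimum of $\{a,b\}$ in $(A(C),\leq_0)$ is $0$, I would take an arbitrary lower bound $w\in A(C)$ of $\{a,b\}$ and prove $w=0$; since $0$ is itself a lower bound, this simultaneously establishes existence of the infimum and its value. Suppose $w\neq 0$. From $w\leq_0 a<_0 -z$ we get $0<_0 w<_0 -z$, hence $R(0,w,-z)$, and compatibility followed by cyclicity give $R(0,w,-z)\Rightarrow R(z,w+z,0)\Rightarrow R(0,z,w+z)$, that is $z<_0 w+z$. The key step is to transport the inequalities $w\leq_0 a$ and $w\leq_0 b$ across the translation by $z$. If $w=a$ then $w+z=x$; if $w<_0 a$ then $w<_0 a<_0 -z$, and the direction $\Leftarrow$ of Proposition \ref{n38}, which holds in every p.c.o.\ group, applied to the triple $(w,a,z)$ in place of $(x,y,z)$, yields $0<_0 w+z<_0 a+z=x$. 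In both cases $w+z\in A(C)$ and $w+z\leq_0 x$; the same argument with $b$ in place of $a$ gives $w+z\leq_0 y$.

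Thus $w+z$ is a lower bound of $\{x,y\}$ lying in $A(C)$, so the maximality of the infimum $z=x\wedge_0 y$ forces $w+z\leq_0 z$; combined with $z<_0 w+z$ this contradicts the antisymmetry of $\leq_0$, and therefore $w=0$. The hard part is exactly this base-point change: because $\leq_0$ is not translation invariant, the inequality $w\leq_0 a$ can be pushed forward to $w+z\leq_0 x$ only thanks to $a<_0 -z$, which is what lets the compatibility argument underlying Proposition \ref{n38} convert the $R$-relation based at $0$ into one based at $z$ and back. Everything else is routine bookkeeping with $\leq_0$ and its least element $0$.
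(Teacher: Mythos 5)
Your proof is correct and follows essentially the same route as the paper's: a putative nonzero lower bound $w$ of $\{x-z,\,y-z\}$ is translated by $z$ (via $R(0,w,x-z,-z)$, compatibility and cyclicity --- exactly the computation underlying the $\Leftarrow$ direction of Proposition \ref{n38} that you invoke) into a lower bound $w+z$ of $\{x,y\}$ satisfying $z<_0 w+z$, contradicting $z=x\wedge_0y$. The only difference is bookkeeping: you treat an arbitrary lower bound, including the boundary case $w=x-z$ or $w=y-z$, which the paper's proof (which only rules out $t<_0 x-z\;\&\;t<_0 y-z$ with both inequalities strict, under the incomparability assumption) leaves implicit, so your write-up is if anything slightly more complete.
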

\begin{proof} If $x\leq_0 y$, then $z=x$, $y-z=y-x<_0-x$ (Proposition \ref{lem34}). Hence $y-z\in A(C)$, 
and $(y-z)\wedge_0(x-z)=x-z=0$. The same holds if $y\leq_0 x$. Now, assume that nor 
$x\leq_0 y$ nor $y\leq_0 x$. 
We have that $z<_0x$, hence $x-z<_0-z$, in particular, $x-z\in A(C)$. Let $t\in C$ such that 
$0<_0t<_0x-z$. Then we have: $R(0,t,x-z,-z)$. Hence $R(z,t+z,x,0)$ holds. Therefore 
$R(0,z,t+z,x)$ holds, i.e.\ $z<_0t+z<_0x$. In the same way, $0<_0t<_0y-z\Rightarrow z<_0t+z<_0y$. 
Hence, since $z=x\wedge_0y$, this yields a contradiction. Consequently, 
there is no $t\in A(C)\backslash \{0\}$ such that $t<_0x-z\;\&\; t<_0y-z$. 
\end{proof}
\begin{Remark}\label{n310} Let $C$ be an $\ell$-c.o.\ group. Then, the supremum 
$x\vee_0y$ exists if, and only if, $(-x)\wedge_0(-y)\neq 0$. If this holds, then 
$x\vee_0y=-((-x)\wedge_0(-y))$. Otherwise, there is no $z\in A(C)$ such that $x\leq_0z$ and 
$y\leq_0z$. If the supremum of $x$ and $y$ does not exist, then we will set $x\vee_0y=\un$. So 
$(A(C)\cup\{\un\},\leq_0)$ is a lattice with smallest element $0$ and greatest element $\un$. 
\end{Remark}
\begin{Lemma}\label{lmav412} Let $C\in \mathcal{AC}$. Then, for every $x$, $y$ in 
$A(C)\cup\{\un\}$: $x\oplus y=\un\Leftrightarrow (-y\leq_0x\; \&\; (x,y)\neq (0,0))$. 
In particular: $\neg x\oplus x=\un$. 
\end{Lemma}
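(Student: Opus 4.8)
The plan is to read off from the definition of $\oplus$ in Theorem \ref{n44} exactly when the value $\un$ is produced, and then reduce the asserted equivalence to a one-line statement about infima. By that definition $x\oplus y=\un$ occurs precisely in the ``otherwise'' clause, that is, when $x\wedge_0(\neg y)+y=0$ and $(x,y)\neq(0,0)$. Since the side condition $(x,y)\neq(0,0)$ also sits on the right-hand side of the claim, it is enough to show, for $(x,y)\neq(0,0)$, that
\[
x\wedge_0(\neg y)+y=0\ \Leftrightarrow\ \neg y\leq_0 x ,
\]
where ``$-y$'' in the statement is understood as the involution $\neg y$ (with $\neg 0=\un$, $\neg\un=0$, and $\neg y=-y$ for $y\in A(C)\setminus\{0\}$), these being the very conventions used to define $\oplus$ and $\leq_0$ on $A(C)\cup\{\un\}$.

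First I would dispose of the boundary values, i.e.\ the cases in which $x$ or $y$ equals $0$ or $\un$, by direct substitution using Notation \ref{notat45}, Theorem \ref{n44} and Remark \ref{n310} ($\un+x=x$, $x\wedge_0\un=x$, $x\wedge_0 0=0$, $\neg 0=\un$, $\neg\un=0$). In each such case both sides of the equivalence take the same truth value. For example, if $y=\un$ then $\neg y=0$, so $x\wedge_0(\neg y)+y=0+\un=0$ and $x\oplus\un=\un$, while $\neg\un=0\leq_0 x$ always holds; and if $x=0$ with $y\in A(C)\setminus\{0\}$ then $0\oplus y=0\wedge_0(\neg y)+y=y\neq\un$, matching the failure of $\neg y\leq_0 0$ (since $0$ is the least element and $y\neq 0$).

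The substance of the proof is the generic case $x,y\in A(C)\setminus\{0\}$, where $\neg y=-y$. Here I would invoke that $C\in\mathcal{AC}$ is an $\ell$-c.o.\ group (the remark following Definition \ref{def45}): Condition 1) of Definition \ref{def45} gives $-y\in A(C)\setminus\{0\}$, and the $\ell$-c.o.\ hypothesis supplies the infimum $w:=x\wedge_0(-y)$ in $(A(C),\leq_0)$. Since $w\leq_0 -y<_0\un$, the element $w$ lies in $A(C)\subset C$ and is not the formal symbol $\un$, so $w+y$ is ordinary group addition and $w+y=0\Leftrightarrow w=-y$. Finally, $w$ being the infimum of $x$ and $-y$, we have $w=-y$ if and only if $-y$ is a lower bound of $x$, i.e.\ $-y\leq_0 x$. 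Chaining these gives
\[
x\oplus y=\un\ \Leftrightarrow\ w+y=0\ \Leftrightarrow\ w=-y\ \Leftrightarrow\ -y\leq_0 x ,
\]
which is the claim in this case.

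The ``in particular'' assertion then costs nothing: applying the equivalence with first argument $\neg x$ and second argument $x$, the right-hand side becomes $\neg x\leq_0\neg x$ together with $(\neg x,x)\neq(0,0)$, both of which hold for every $x\in A(C)\cup\{\un\}$ (one of $\neg x$, $x$ is always nonzero), so $\neg x\oplus x=\un$. I expect the only real friction to be the bookkeeping around the two formal elements $0$ and $\un$: the clean infimum computation is valid only for honest group elements, so the points that must be verified with care are that $w$ is never equal to $\un$ and that every degenerate choice of arguments is absorbed either by the side condition $(x,y)\neq(0,0)$ or by the conventions governing $\un$.
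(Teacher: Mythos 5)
Your proof is correct and follows essentially the same route as the paper's: read off the ``otherwise'' clause defining $\oplus$, then observe that $x\wedge_0(\neg y)+y=0$ iff $x\wedge_0(-y)=-y$ iff $-y\leq_0 x$, by the characterization of the infimum. Your explicit treatment of the boundary cases involving $0$ and $\un$ (and the reading of $-y$ as $\neg y$ there) is more detailed than the paper's one-line proof, but it is the same argument.
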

\begin{proof}
We have that $x\wedge_0 \neg y+y=0$ if, and only if, $-y=x\wedge_0(-y)$. So $x\wedge_0 \neg y+y=0
\Leftrightarrow -y\leq_0x$. In particular, $x\oplus y=\un \Leftrightarrow (-y\leq_0 x$ and 
$(x,y)\neq(0,0))$. 
\end{proof}
{\it Proof of Theorem \ref{n44}.} Note that if $x\vee_0y$ does not exist in $A(C)$, then 
$x\vee_0y=\un$. By Lemma \ref{lemav410}, if $x$ and $y$ belong to $A(C)\backslash \{0\}$ 
and $x\oplus y\neq \un$, then 
$x\oplus y=x\wedge_0 \neg y+y\in A(C)$. 
Let $x$, $y$ in $A(C)\cup\{\un\}$. If $y=0$, then $x\oplus y=x\wedge_0 \un +0=x$.  
If $x=0$, then $x\oplus y=0+y=y$. If $y=0$, then $x\oplus y=x+0=x$. 
If $y=\un $, then $x\wedge_0 0+\un =0$, hence $x\oplus y=\un $. 
If $x=\un $, then $1\wedge \neg y+y=\neg y+y=0$. Hence $x\oplus y=\un $. It follows that in any case 
$x\oplus y\in A(C)\cup\{\un\}$.\\ 
\indent  We have to prove that $\oplus$ and $\neg$ satisfy the axioms of Definition \ref{def21}. \\[2mm]
\indent  MV4) Trivially, for every $x\in A(C)\cup\{1\}$: $\neg \neg x=x$. \\[2mm]
\indent  MV3) and MV5) have already been proved (i.e.\ $x\oplus 0=x$, $x\oplus \neg 0=\neg 0$).  \\[2mm] 
\indent  MV2) ($x\oplus y=y\oplus x$) The case where $x\in \{0,\un\}$ or $y\in \{0,\un\}$ 
follows from above calculations. Assume that 
$x$ and $y$ belong to $A(C)\backslash \{0\}$. By Lemma \ref{lmav412}, 
$x\oplus y=\un \Leftrightarrow -y\leq_0 x\Leftrightarrow -x\leq_0 y\Leftrightarrow y\oplus x =\un $. 
Otherwise,  
$x\oplus y-y\oplus x=x\wedge_0 (-y)+y-(y\wedge_0(-x)+x)=x\wedge_0(-y)-(y\wedge_0(-x))-
(x-y)=x\wedge_0(-y)+(-y)\vee_0 x -(x-y)=0$, by 3) of Definition \ref{def45}. \\[2mm] 
\indent  MV6) ($\neg(\neg x \oplus y)\oplus y=\neg(\neg y \oplus x)\oplus x$) Trivially, we can assume that 
$x\neq y$. 
Since $x\vee_0y=y\vee_0x$, it is sufficient to prove that for all $x$, $y$ in $A(C)\cup\{\un\}$ 
we have that $\neg(\neg x \oplus y)\oplus y=x\vee_0y$. 
If $y=0$, then $\neg(\neg x \oplus y)\oplus y=\neg(\neg x \oplus y)=
\neg(\neg x)=x=x\vee_0 y$. If $x=0$, then $\neg(\neg x \oplus y)\oplus y=
\neg(\un\oplus y)\oplus y=\neg \un\oplus y=0\oplus y=y=x\vee_0 y$. \\ 
\indent  If $y=\un$, then $\neg(\neg x \oplus y)\oplus y=\neg(\neg x \oplus y)\oplus \un=\un=
x\vee_0 y$. 
If $x=\un$ and $y\in A(C)\backslash \{0\}$, then $\neg(\neg x \oplus y)\oplus y=\neg y\oplus y 
=\un =x\vee_0 y$. \\
\indent  If $x<_0 y$, then, by Lemma \ref{lmav412}, $\neg x\oplus y=\un $, and 
$\neg(\neg x \oplus y)\oplus y=0\oplus y=y =x\vee_0 y$. \\
\indent  Otherwise, we have that $\neg x\oplus y=(-x)\wedge_0(-y)+y\neq 0$, and 
$$\neg(\neg x\oplus y)\oplus y=(-((-x)\wedge_0(-y)+y))\oplus y=(x\vee_0y-y)
\oplus y=(x\vee_0y-y)\wedge_0(-y)+y.$$ 
Since $y<_0x\vee_0y$, we have that $x\vee_0y-y<_0-y$ 
(by Proposition \ref{lem34}). 
Hence $(x\vee_0y-y)\wedge_0(-y)+y=x\vee_0 y-y+y=x\vee_0y$. \\[2mm] 
\indent  MV1) ($x\oplus (y\oplus z)=(x\oplus y)\oplus z$). This is trivial if $x$, $y$ or $z$ 
belongs to $\{0,\un \}$. We assume that $x$, $y$, $z$ belong to $A(C)\backslash \{0\}$. 
Assume that $x\oplus y=\un$. Then, $(x\oplus y)\oplus z=\un$. 
By Lemma \ref{lmav412}, we have that $-y\leq_0x$. If $y\oplus z=\un$, then 
$(x\oplus y)\oplus z=\un=x\oplus (y\oplus z)$. We assume that $y\oplus z\neq \un$. 
By Lemma \ref{lemav410}, $y\leq_0 y\oplus z$. Hence $-(y\oplus z)\leq_0-y\leq_0x$. 
Therefore, $x\wedge_0 (-(y\oplus z))+(y\oplus z)=-(y\oplus z)+(y\oplus z)=0$. It follows that 
$(x\oplus y)\oplus z=\un=x\oplus (y\oplus z)$.\\
\indent  Assume that $x\oplus y\neq \un \neq y\oplus z$, so we have that $-y\not\leq_0 x$ and 
$-y\not\leq_0z$. Therefore:
$$\begin{array}{rcl}
(x\oplus y)\oplus z-x\oplus (y\oplus z)&=&
(x\oplus y)\oplus z-(z\oplus y)\oplus x\\
&=&(x\wedge_0(-y)+y)\wedge_0(-z)+z-(z\wedge_0(-y)+y)\wedge_0(-x)-x\\
&=&(x\wedge_0(-y)+y)\wedge_0(-z)-(z\wedge_0(-y)+y)\wedge_0(-x)-(x-z).
\end{array}$$
Now, it follows from 4) of Definition \ref{def45} that 
$(x\oplus y)\oplus z-x\oplus (y\oplus z)=0$. 
\hfill $\qed$
\begin{Remark} 
Let $n_1$ and $n_2$ be integers, greater than $4$, $C_1$ be the c.o.\ 
group $\ZZ/ n_1\ZZ$ and $C_2$ be the c.o.\ group $\ZZ/n_2\ZZ $. $C_1$ 
and $C_2$ define MV-algebras. We can define a p.c.o.\ group 
$C_1\times C_2$ by setting $R((x_1,x_2),(y_1,y_2),(z_1,z_2))\Leftrightarrow 
R(x_1,y_1,z_1)\; \&\; R(x_2,y_2,z_2)$. Then 
$$A(C_1\times C_2)=C_1\times C_2\setminus 
\left[((\ZZ/ n_1\ZZ)\times\{0\}) \cup(\{0\}\times (\ZZ/n_2\ZZ))\cup \{(n_1-1,1),
(1,n_2-1) \}\right].$$ 
Now, $-(3,1)=(n_1-3,n_1-1)$ and $(1,3)$ belong to $A(C_1\times C_2)$, 
$(3,1)\not \leq_0 (1,3)$, $(1,3)\not\leq_0 (3,1)$, but $(1,3)-(3,1)=(n_1-2,2)\in 
A(C_1\times C_2)$. Hence the rule $x\in A(C),y\in A(C) \Rightarrow 
(x+y\in A(C) \Leftrightarrow x\leq_0 -y\mbox{ or } -y\leq_0 x)$ does not hold. Consequently 
$C_1\times C_2$ does not define canonically an MV-algebra. 
\end{Remark}
\indent  We can define another partial cyclic order on $C_1\times C_2$ by setting 
$(x_1,x_2)\leq _0 (y_1,y_2) \Leftrightarrow (x_1\leq_0 y_1\;\&\; x_2\leq_0 y_2)$. In 
this case $A(C_1\times C_2)=C_1\times C_2$, and we conclude in the same way that 
$C_1\times C_2$ does not define canonically an MV-algebra. \\ 
\indent  Now, by  Theorem \ref{prop321} the p.c.o.\ group 
$(\ZZ\times \ZZ)/\ZZ(n_1,n_2)$ defines canonically an MV-algebra. 
\subsection{Wound-rounds of lattices}
\begin{Theorem}\label{prop321} 
Let $C$ be the wound-round of a lattice. Then $C\in \mathcal{AC}$. 
Furthermore, if $C=G/\ZZ u$, then $A(C)\cup \{\un\}$ is isomorphic to the MV-algebra 
$\Gamma(G,u)$. 
\end{Theorem}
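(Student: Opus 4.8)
The plan is to transport the entire MV-structure of $\Gamma(G,u)=[0,u]$ to $A(C)\cup\{\un\}$ through the quotient map, reading off Conditions 1)--4) of Definition \ref{def45} from identities in the $\ell$-group $G$. Assume first that $A(C)\neq\{0\}$; by Lemma \ref{lm21} this forces $N=\;]0,u[$, so Corollary \ref{propap38} supplies an isomorphism of ordered sets $\varphi\colon [0,u]\to A(C)\cup\{\un\}$, namely the restriction of $\rho$ to $[0,u[$ together with $\varphi(u)=\un$. I will use repeatedly that $\varphi(g\wedge h)=\varphi(g)\wedge_0\varphi(h)$ and $\varphi(g\vee h)=\varphi(g)\vee_0\varphi(h)$ for $g,h\in[0,u]$, that $\varphi(u-g)=-\varphi(g)$ for $g\in\;]0,u[$, and that $\rho$ is a group homomorphism. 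Condition 1) is exactly Remark \ref{rk38}, and Condition 2) is immediate, since $([0,u],\leq)$ is a distributive lattice (an interval of the $\ell$-group $G$) and $\varphi$ is an order isomorphism onto $A(C)\cup\{\un\}$.

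For Condition 3), write $x=\varphi(g)$, $y=\varphi(h)$ with $g,h\in[0,u[$ and apply $\rho$ to the $\ell$-group identity $g+h=(g\wedge h)+(g\vee h)$. Since $\rho$ is a homomorphism, $x+y=x\wedge_0 y+\rho(g\vee h)$, where $\rho(g\vee h)=x\vee_0 y$ if $g\vee h<u$ and $\rho(g\vee h)=0=\rho(u)$ if $g\vee h=u$; in the latter case $x\vee_0 y=\un$ and the convention $\un+w=w$ makes the two sides agree, giving Condition 3).

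The heart of the proof is Condition 4). Put $a=\varphi^{-1}(x)$, $b=\varphi^{-1}(z)$, $c=\varphi^{-1}(y)$ in $]0,u[$. Using $(p\wedge q)+r=(p+r)\wedge(q+r)$ one computes
$$x\wedge_0(-z)+z=\varphi\bigl(a\wedge(u-b)\bigr)+\varphi(b)=\rho\bigl((a+b)\wedge u\bigr),$$
whence, intersecting with $-y=\varphi(u-c)$ and using $u\wedge(u-c)=u-c$,
$$\bigl(x\wedge_0(-z)+z\bigr)\wedge_0(-y)=\rho(P),\qquad P:=(a+b)\wedge(u-c),$$
and symmetrically $\bigl(y\wedge_0(-z)+z\bigr)\wedge_0(-x)=\rho(Q)$ with $Q:=(b+c)\wedge(u-a)$. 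The key observation is that
$$P+c=(a+b+c)\wedge u=Q+a,$$
so that $P-Q=a-c$ already holds in $G$; applying $\rho$ yields $\rho(P)-\rho(Q)=\rho(a)-\rho(c)=x-y$, which is Condition 4). I expect the main nuisance to be the boundary instances where $(a+b)\wedge u=u$ (so that $x\wedge_0(-z)+z=\un$): there one checks directly, using $\un\wedge_0(-y)=-y$ and $P=u\wedge(u-c)=u-c$, that $\rho(P)$ is still the correct term, so the displayed identities persist.

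Having verified 1)--4) we obtain $C\in\mathcal{AC}$, and Theorem \ref{n44} endows $A(C)\cup\{\un\}$ with its MV-structure. The same computation as for Condition 4 shows $\varphi(g)\oplus\varphi(h)=\varphi(g)\wedge_0(\neg\varphi(h))+\varphi(h)=\rho((g+h)\wedge u)=\varphi(g\oplus h)$ in $\Gamma(G,u)$, while $\varphi(\neg g)=\varphi(u-g)=-\varphi(g)=\neg\varphi(g)$ and $\varphi(0)=0$; hence the order isomorphism $\varphi$ is an isomorphism of MV-algebras from $\Gamma(G,u)$ onto $A(C)\cup\{\un\}$. The remaining degenerate situation $A(C)=\{0\}$ (where, by Lemma \ref{lm21}, $[0,u]$ has at most four elements) lies outside the reach of $\varphi$ and must be treated separately, since there the wound-round already collapses the small algebra; this is the one place where the statement is understood modulo the finitely many small cases excluded throughout Section \ref{section4}.
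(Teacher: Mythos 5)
Your proposal is correct and follows essentially the same route as the paper's own proof: Conditions 1)--3) of Definition \ref{def45} are transported through the order isomorphism of Corollary \ref{propap38} exactly as in the paper, and your key identity $P+c=(a+b+c)\wedge u=Q+a$ is precisely the paper's displayed computation $\rho((g+k)\wedge(u-h)-(h+k)\wedge(u-g))=\rho((g+k+h)\wedge u-h-(h+k+g)\wedge u+g)=\rho(g-h)$ for Condition 4). If anything you are more careful than the paper at the boundary case $(a+b)\wedge u=u$ (the paper's proof asserts the strict inequality $g\wedge(u-k)<u-k$, which can fail, e.g.\ in $\RR/\ZZ$ when $g+k\geq u$), where one must indeed read $x\wedge_0(-z)+z$ as $\un$ in the sense of the $\oplus$ of Theorem \ref{n44}, as you do, for the identities to persist.
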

\begin{proof} We have to prove that $C$ satisfies conditions 1), 2), 3), 4) of Definition 
\ref{def45}. \\[2mm]
\indent  1) has been proved  in Remark \ref{rk38}. \\[2mm] 
\indent  2) Let $(G,u)$ be a unital $\ell$-group such that 
$C\simeq G/\ZZ u$ and $\rho$ be the natural mapping from $G$ onto $C$. 
By Lemma \ref{lem38} the restriction of $\rho$ is a one-to-one mapping from 
$G_u=\{g\in G\mid 0\leq g\;\&\; g\not\geq 0\}$ onto $C$. We saw in Proposition \ref{rkm16} 
that $A(C)$ can be identified with a subset of $[0,u[$. 
Let $g$, $h$ in $[0,u[$ such that $\rho(g)\in A(C)$ and $\rho(h)\in A(C)$. We have that $g<h\Leftrightarrow \rho(g)<_0\rho(h)$. It follows that $\rho(g\wedge h)\in A(C)$, 
$\rho(g\wedge h)=\rho(g)\wedge_0\rho(h)$, and if $g\vee h\neq u$, then 
$\rho(g\vee h)\in A(C)$, $\rho(g\vee h)=\rho(g)\vee_0\rho(h)$. By setting $\rho(u)=\un$, 
we have that $g\vee h=u\Leftrightarrow \rho(g)\vee_0\rho(h)=\un$, hence $A(C)\cup\{\un\}$ 
embeds into a sublattice of $[0,u]$, so it is a distributive lattice, with smallest element $0$ 
and greatest element $\un$. Note that by Corollary \ref{propap38}, if $A(C)\neq\{0\}$, then 
$A(C)\cup \{\un \}$ is isomorphic to the lattice $[0,u]$. \\[2mm] 
\indent  3) Let $x$, $y$ in $A(C)$, and $g$, $h$ be the elements of $[0,u[$ such that 
$\rho(g)=x$ and $\rho(h)=y$. Since $G$ is an $\ell$-group, we have that 
$g+h=g\wedge h+g\vee h$, with $0\leq g\wedge h<u$ and $0\leq g\vee h\leq u$. We saw in 
Corollary \ref{propap38} that $\rho$ induces an isomorphism of ordered sets between 
$[0,u[$ and $A(C)$. Hence 
$\rho(g\wedge h)=x\wedge_0 y$, and if $g\vee h<u$, then $\rho(g\vee h)=x\vee_0y$. 
If $g+h\in G_u$, then $g\wedge h+g\vee h\in G_u$. Hence $g\vee h<u$ and 
$x+y=\rho(g+h)=\rho(g\wedge h+g\vee h)=\rho(g\wedge h)+\rho(g\vee h)=x\wedge_0 y+ 
x\vee_0 y$. Assume that $g+h\notin G_u$, then $g+h-u\in G_u$. If $g\vee h<u$, then we have that 
$$\begin{array}{rcl}
x+y&=&\rho(g)+\rho(h)\\
&=&\rho(g+h-u)\\
&=&\rho(g\wedge h+g\vee h-u)\\
&=&\rho(g\wedge h)+\rho(g\vee h)\\
&=&x\wedge_0 y+x\vee_0 y.\end{array}$$ 
If $g\vee h=u$, then $x\vee_0y=\un$. Hence $x\wedge_0y+x\vee_0y=x\wedge_0y$ (see Notation \ref{notat45}).  
Then: 
$$\begin{array}{rcl}
x+y&=&\rho(g)+\rho(h)\\
&=&\rho(g+h-u)\\
&=&\rho(g\wedge h+g\vee h-u)\\
&=&\rho(g\wedge h)\\
&=&x\wedge_0 y.\end{array}$$ 
\indent  4) Let $x$, $y$, $z$ in $A(C)\backslash \{0\}$ and 
$g$, $h$, $k$ in $]0,u[$ such that $\rho(g)=x$, $\rho(h)=y$ and $\rho(k)=z$. 
$$(x\wedge_0(-z)+z)\wedge_0(-y)-(y\wedge_0(-z)+z)\wedge_0(-x)=$$
$$(\rho(g)\wedge_0\rho(u-k)+\rho(k))\wedge_0\rho(u-h)-(\rho(h)\wedge_0\rho(u-k)+
\rho(k))\wedge_0\rho(u-g)=$$
$$(\rho(g\wedge(u-k))+\rho(k))\wedge_0\rho(u-h)-(\rho((h)\wedge(u-k))+\rho(k))\wedge_0
\rho(u-g)=$$
$$\rho(g\wedge(u-k)+k)\wedge_0\rho(u-h)-\rho(h\wedge(u-k)+k)\wedge_0
\rho(u-g).$$
$g\wedge(u-k)<u-k$, hence $g\wedge(u-k)+k<u-k+k=u$, hence 
$\rho(g\wedge(u-k)+k)\in A(C)$, and in the same way 
$\rho(h\wedge(u-k)+k)\in A(C)$, so: 
$$(x\wedge_0(-z)+z)\wedge_0(-y)-(y\wedge_0(-z)+z)\wedge_0(-x)=$$
$$\rho((g\wedge(u-k)+k)\wedge (u-h))-\rho((h\wedge(u-k)+k)\wedge(u-g))=$$
$$\rho((g+k)\wedge u\wedge (u-h))-\rho((h+k)\wedge u \wedge(u-g))=$$
$$\rho((g+k)\wedge (u-h)-(h+k)\wedge(u-g))=
\rho((g+k+h)\wedge u-h-(h+k+g)\wedge u+g)=$$
$$\rho(g-h)=\rho(g)-\rho(h)=x-y.$$
The last assertion follows from Proposition \ref{prop41} and from the definition 
of the MV-algebra $A(C)\cup\{\un \}$ given in Theorem \ref{n44}. 
\end{proof}
\indent  
One can wonder if being the wound-round of a lattice can be characterized by first-order 
sentences. We will see that this holds if the p.c.o.\ group $C$ is 
generated by $A(C)$. 
This characterization relies on good sequences. 
\begin{defi}{\bf Notation.} Let $C\in \mathcal{AC}$. 
We denote by $\langle A(C)\rangle$ the subgroup 
of $C$ generated by $A(C)$.  
\end{defi}
\begin{Theorem}\label{thm416}  
Let $C$ be a p.c.o.\ group. \\ 
$\langle A(C)\rangle$ is the wound-round of a lattice if, and only if, it is isomorphic to the 
wound-round of the Chang $\ell$-group 
of the MV-algebra $A(C(A))\cup\{\un\}$. \\
$\langle A(C)\rangle$ being the wound-round of a lattice is expressible by countably many 
first-order formulas of the language $L_c$. 
\end{Theorem}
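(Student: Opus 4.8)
The plan is to read the statement entirely through the functorial dictionary already built up. Throughout I write $A:=A(C)\cup\{\un\}$ for the MV-algebra produced by Theorem~\ref{n44} (so a priori I work with $C\in\mathcal{AC}$), let $(G_A,u_A)$ be its Chang $\ell$-group, and set $C(A):=G_A/\ZZ u_A$, the wound-round of a lattice attached to $A$. The first thing I would isolate is a lemma saying that passing to the generated subgroup does not disturb the non-isolated elements: $A(\langle A(C)\rangle)=A(C)$. Indeed, if $x\in A(C)\setminus\{0\}$ has a nonzero witness $y$ (with $x<_0y$ or $y<_0x$), then $y$ is itself non-isolated, so $y\in A(C)\subseteq\langle A(C)\rangle$ and $x$ is already non-isolated inside $\langle A(C)\rangle$; conversely any witness lying in $\langle A(C)\rangle$ witnesses non-isolation in $C$. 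Since membership in $\mathcal{AC}$ (Definition~\ref{def45}) depends only on the set $A(\cdot)$ equipped with $<_0$ and the restriction of $+$, this lemma also gives $C\in\mathcal{AC}\Leftrightarrow\langle A(C)\rangle\in\mathcal{AC}$.

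With the lemma in hand the first equivalence is quick. The implication $(\Leftarrow)$ is immediate, since $C(A)=G_A/\ZZ u_A$ with $G_A$ an $\ell$-group is by definition a wound-round of a lattice. For $(\Rightarrow)$, assume $\langle A(C)\rangle\simeq H/\ZZ v$ for some unital $\ell$-group $(H,v)$. Theorem~\ref{prop321} gives $\langle A(C)\rangle\in\mathcal{AC}$ and $A(\langle A(C)\rangle)\cup\{\un\}\simeq\Gamma(H,v)$, which by the lemma reads $A\simeq\Gamma(H,v)$. By uniqueness of the Chang $\ell$-group (Section~\ref{section2}) we get $(H,v)\simeq(G_A,u_A)$, hence $\langle A(C)\rangle\simeq H/\ZZ v\simeq G_A/\ZZ u_A=C(A)$, which is exactly the wound-round of the Chang $\ell$-group of $A$.

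For the first-order part I would construct a canonical c-epimorphism $\eta\colon C(A)\twoheadrightarrow\langle A(C)\rangle$ and show the property in question amounts to its injectivity. By Remark~\ref{rk212} and Lemma~\ref{lem38}, every element of $C(A)$ is represented by a good sequence $(x_1,\dots,x_n)$ with entries in $[0,u_A[\,\simeq A(C)$; I send its class to $x_1+\cdots+x_n$ computed in $\langle A(C)\rangle$. To see this is well defined it suffices to verify that the carry relation $x+y=(x\oplus y)+(x\odot y)$ holds in $\langle A(C)\rangle$ for $x,y\in A(C)$. Expanding $\oplus$ and $\odot$ through Theorem~\ref{n44} and the order-reversal Condition~1 of Definition~\ref{def45}, this identity collapses to $x\wedge_0(-y)+x\vee_0(-y)=x-y$, i.e.\ to Condition~3 of Definition~\ref{def45} applied to $x$ and $-y$ (note $-y\in A(C)$ by Proposition~\ref{lem34}). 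Thus $\eta$ is a well-defined group epimorphism fixing $A(C)$ pointwise. Since it then carries $A(C(A))=A(C)$ onto $A(\langle A(C)\rangle)=A(C)$ and respects $<_0$ there, the isomorphism criterion of Remarks~\ref{rks36} shows that $\eta$ is an isomorphism of p.c.o.\ groups as soon as it is injective. Conversely, if $\langle A(C)\rangle$ is a wound-round of a lattice, the isomorphism of the first part composed with $\eta$ is an endomorphism of $C(A)$ fixing the generating set $A(C)$ (Remark~\ref{rk321}), hence the identity, forcing $\eta$ to be injective.

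It remains to capture ``$C\in\mathcal{AC}$ and $\eta$ injective'' by countably many $L_c$-sentences. The first conjunct is already axiomatised this way by Corollary~\ref{cor411}, and on the $L_c$-definable set $A(C)$ the relation $<_0$ and the operations $\wedge_0,\oplus,\odot$ are $L_c$-definable. A failure of injectivity of $\eta$ involves only finitely many generators, hence is witnessed by two distinct good sequences of some bounded length $n$, with entries in $A(C)$, having equal sum in $\langle A(C)\rangle$; by uniqueness of good-sequence representations (Lemma~\ref{lm27}) this is exactly the negation of the single $L_c$-sentence ``any two good sequences of length $\le n$ with entries in $A(C)$ and equal sum are equal''. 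Conjoining these sentences over all $n$ with the axioms for $\mathcal{AC}$ yields the desired countable axiomatisation. The main obstacle, to my mind, is the middle paragraph: producing $\eta$ and checking that the carry relation survives in $\langle A(C)\rangle$ purely from the $\mathcal{AC}$-axioms, and then reducing the global isomorphism to the finitary, length-bounded injectivity statements, so that only countably many first-order sentences are actually needed.
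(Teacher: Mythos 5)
Your argument is correct in substance, but on the first-order half it takes a genuinely different route from the paper. The first equivalence is proved exactly as in the paper (Theorem \ref{prop321} applied to $\langle A(C)\rangle$, then uniqueness of the Chang $\ell$-group, then the wound-round functor); your preliminary lemma $A(\langle A(C)\rangle)=A(C)$, with the consequent equivalence $C\in\mathcal{AC}\Leftrightarrow\langle A(C)\rangle\in\mathcal{AC}$, is used only implicitly by the paper and is worth making explicit. The divergence is in the axiomatisation: the paper adjoins to the $\mathcal{AC}$-axioms of Corollary \ref{cor411} \emph{three} ingredients --- existence of good-sequence representations, their uniqueness, and explicit formulas forcing addition in $\langle A(C)\rangle$ to follow Chang's rule --- and then matches good sequences on both sides. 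You instead build the epimorphism $\eta\colon C(A)\to\langle A(C)\rangle$ unconditionally from the $\mathcal{AC}$-axioms and axiomatise only injectivity, i.e.\ the uniqueness sentences; existence then comes for free from surjectivity of $\eta$ (its image contains the generating set $A(C)$, Remark \ref{rk321}), and Chang's addition rule is derived rather than postulated. This is a leaner axiomatisation, and your reduction of the two-term carry law $x+y=(x\oplus y)+(x\odot y)$ in $\langle A(C)\rangle$ to Conditions 1 and 3 of Definition \ref{def45} (via Lemma \ref{lmav412} and Remark \ref{n310}) is correct, including the degenerate cases $x\oplus y=\un$ and $x\odot y=0$.

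One step is asserted where it needs an argument: that additivity of $\eta$ on all good sequences ``collapses'' to the two-term carry law. The carry law handles a single elementary carry, but the closed-form Chang sum must be seen as a telescoping of such carries. Concretely, by Remark \ref{rk212} one reduces to adding a length-one sequence $(a)$, where the entries of $x+(a)$ are $x_i\oplus(x_{i-1}\odot a)$, and the telescoping of the sums of entries uses the good-pair identity $x_i\odot(x_{i-1}\odot a)=x_i\odot a$. Fortunately this identity is internal to the MV-algebra $A(C)\cup\{\un\}$ (available by Theorem \ref{n44} and standard MV-algebra theory), so nothing beyond the carry law must be checked in the ambient group; the gap is real but routinely fillable, and with it supplied your proof is complete.
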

\begin{proof} Recall that the MV-algebra $A(C)\cup\{\un\}$ is first-order definable in $C$, 
by the rules defined in Theorem \ref{n44}. In particular, we can 
assume that $\oplus$ belongs to the language. Trivially, if $\langle A(C)\rangle$ 
is isomorphic to the wound-round of the Chang $\ell$-group 
of the MV-algebra $A(C(A))\cup\{\un\}$, then it is the 
wound-round of a lattice. \\
\indent  Assume that $\langle A(C)\rangle =G/\ZZ u$, where $(G,u)$ is a unital $\ell$-group. 
Let $A$ be the MV-algebra $A(C)\cup \{1\}$, $(G_A,u_A)$ be the 
Chang $\ell$-group of $A$ and $C'$ be the p.c.o.\ group $G_A/\ZZ u_A$. \\[2mm]
\indent  1 We know that $A\simeq \Gamma (G_A,u_A)$, and, by Theorem \ref{prop321}, $A\simeq \Gamma (G,u)$. 
By uniqueness of the Chang $\ell$-group, it follows that there is a unital $\ell$-isomorphism 
between $(G,u)$ and $(G_A,u_A)$. 
Hence the p.c.o.\ groups $\langle A(C)\rangle$ and $C'$ are isomorphic. \\[2mm]
\indent  2 By Remark \ref{rk212}, every element $x$ of the positive cone of $G_A$ is a sum of 
elements $x_1,\dots,x_n$ of $A$ satisfying the conditions of Lemmas \ref{lm27} and \ref{lem28}, 
where $x\leq nu$. Furthermore, if $x\ngeq u_A$, then the $x_i$'s are different from $u_A$. Let 
call the sequence $(x_1,\dots,x_n,0,\dots)$ the {\it good sequence} associated with $x$. By 
Lemma \ref{lem38}, the canonical epimorphism $\rho:\; G_A\rightarrow G_A/\ZZ u_A$ induces a 
one-to-one mapping between $G_{u_A}=\{x\in G_A\mid x\geq 0\;\&\; x\ngeq u_A\}$ and 
$\langle A(C)\rangle$. It follows that every element $x$ of $\langle A(C)\rangle$ can be represented 
by a unique good sequence of elements of $A(C)$. Furthermore, by Lemma \ref{lm27}, if 
$x$ is a sum of $n$ elements of $A(C)$, then the good sequence associated with $x$ contains at most $n$ elements 
different from $0$. So $C$ satisfies the following family of first-order formulas. For every $n\in \NN^*$,  
$$\forall (x_1,\dots,x_n)\in A(C)^n\; \exists (y_1,\dots,y_n)\in A(C)^n \bigwedge_{1\leq i<n}
(y_{i+1}\wedge_0 -y_i=0 \;\&\; y_i=0\Rightarrow y_{i+1}=0)$$ 
$$\;\&\; x_1+\cdots+x_n=y_1+\cdots+y_n \;\&\; 
(\forall (z_1,\dots,z_n)\in A(C)^n \bigwedge_{1\leq i<n}
(z_{i+1}\wedge_0 -z_i=0 \;\&\; z_i=0\Rightarrow z_{i+1}=0)$$
$$\;\&\; x_1+\cdots+x_n=z_1+\cdots+z_n)
\Rightarrow z_1=y_1,\dots, z_n=y_n$$
Every element $x$ of the positive cone of $G_A$ is equivalent modulo $\ZZ u_A$ to an element 
$x'$ of $G_{u_A}$, and the good sequence associated with $x'$ is obtained by dropping the $u_A$'s from 
the good sequence associated with $x$. Hence so is the good sequence associated with $\rho(x)$. Now, if 
$x$ and $y$ belong to $G_{u_A}$, then the good sequence associated with $z=x+y$ is obtained by the 
rules $z_i=x_i\oplus(x_{i-1}\odot y_1)\oplus \cdots \oplus (x_1\odot y_{i-1})\oplus y_i$. The good 
sequence associated with $\rho(x)$ is obtained by dropping the $u_A$'s from the good 
sequence associated with $z$. Consequently, $C$ satisfies the following family of first-order 
formulas. For every $n\in \NN^*$, 
$$\forall (x_1,\dots,x_n)\in A(C)^n\; \forall (y_1,\dots,y_n)\in A(C)^n\; 
\forall (z_1,\dots,z_{n})\in A(C)^{n} \bigwedge_{1\leq i<n}
(x_{i+1}\wedge_0 -x_i=0$$ 
$$\;\&\; x_i=0\Rightarrow x_{i+1}=0) \;\&\; \bigwedge_{1\leq i<n}
(y_{i+1}\wedge_0 -y_i=0 \;\&\; y_i=0\Rightarrow y_{i+1}=0) \;\&\;\bigwedge_{1\leq i<n}
(z_{i+1}\wedge_0 -z_i=0$$
$$\;\&\; z_i=0\Rightarrow z_{i+1}=0) \;\&\; x_1+\cdots+x_n+y_1+\cdots+y_n=z_1+\cdots+z_n
\Rightarrow \bigcup_{1\leq i_0<n}$$
$$\bigcup_{1\leq i<i_0}
(x_i\oplus (x_{i-1}\odot y_1)\oplus\cdots\oplus(x_1\odot y_{i-1})\oplus y_i=1)\;\&\; 
x_{i_0}\oplus(x_{i_0-1}\odot y_1)\oplus\cdots\oplus(x_1\odot y_{i_0-1})\oplus y_{i_0}\neq 1$$
$$\;\&\; 
\bigcup_{i_0\leq i\leq n} z_i=x_i\oplus(x_{i-1}\odot y_1)\oplus\cdots\oplus(x_1\odot y_{i-1})\oplus y_i
$$
Conversely, assume that $C\in \mathcal{AC}$ and that $C$ satisfies above families of formulas 
(recall that by Corollary \ref{cor411}, $C\in \mathcal{AC}$ 
is expressible by countably many first-order formulas). We prove that 
$\langle A(C)\rangle$ is isomorphic to the wound-round of $G_A/\ZZ u_A$, 
where $A$ is the MV-algebra $A(C)\cup\{1\}$. The group operation on 
$\langle A(C)\rangle$ is determined by $A(C)$ and by above formulas, which are also satisfied by the 
group $C'=G_A/\ZZ u_A$. It follows that the groups $\langle A(C)\rangle$ and $C'$ are isomorphic. 
Furthermore, the ordered sets $[0,u]$ and $(A(C)\cup \{\un\},\leq_0)$ are isomorphic. By 
Corollary \ref{propap38} they are isomorphic to $(A(C')\cup \{\un\},\leq_0)$. 
By Remarks \ref{rks36}, the p.c.o.\ groups $C'$ and $\langle A(C)\rangle$ 
are isomorphic. This proves that being the wound-round of a lattice is expressible by countably 
many first-order formulas. 
\end{proof}
\begin{Remark} If $\langle A(C)\rangle$ is the wound-round of a lattice and is 
not linearly ordered (when equipped with the partial order $\leq_0$), then it is infinite. 
\end{Remark}
\begin{proof} Let $(G,u)$ be a unital $\ell$-group 
such that $\langle A(C)\rangle\simeq G/\ZZ u$. If $\langle A(C)\rangle$ is not linearly ordered, 
then $G$ is not linearly ordered. Hence there exist $x>0$ and $y>0$ in $G$ such that 
$x\nleq y$ and $y\nleq x$. So $x\wedge y<x$ and $x\wedge y<y$. By taking $x-x\wedge y$
instead of $x$, and $y-x\wedge y$ instead of $y$, we can assume that  $x>0$ and $y>0$ and 
$x\wedge y=0$. By properties of $\ell$-groups, for every positive integer $n$ we have 
$nx \wedge y=0=x\wedge ny$ (this follows for example from 1.2.24 on p.\ 22 of \cite{BKW}). In particular, 
$x$ and $y$ are not strong units. It follows that, for every 
$n\in \NN^*$, $nx\not> u$, hence $x,2x,\dots,nx,\dots$ belong to different classes modulo 
$\ZZ u$, therefore $G/\ZZ u$ is infinite. 
\end{proof}
\section{Case of MV-chains.}\label{section5}
\indent  We know that every c.o.\ group $C$ is the wound-round of a unique 
(up to isomorphism) unital linearly ordered group $(uw(C),u_C)$ 
(see Theorem \ref{thrieg}). 
So there is a one-to-one correspondence between c.o.\ groups and unital 
linearly ordered groups. In fact, this correspondence is a
functorial one (see Corollary \ref{functco}). \\
\indent  We construct $(uw(C),u_C)$. The linearly ordered group 
$uw(C)$ is isomorphic to $\ZZ \times C$. The partial order $\leq$ is the lexicographic order of 
$(\ZZ,\leq)\times (C,\leq_0)$, $u_C=(1,0)$ and $(m,x)+(n,y)=(m+n,x+y)$ if $x=y=0$ or 
$\min_0(x,y)<_0 x+y$, and $(m,x)+(n,y)=(m+n+1,x+y)$ otherwise. \\
\indent  There is also a 
one-to-one correspondence between unital 
linearly ordered groups and MV-chains: 
a unital linearly ordered group $(G,u)$ is associated with the MV-chain 
$\Gamma(G,u)=[0,u]$ (see \cite[Lemma 6]{Ch}). 
Conversely, an MV-chain $A$ is associated with its Chang $\ell$-group $G_A$. 
Furthermore, this correspondence is a functorial one (see Section \ref{section2}). 
It follows a functorial one-to-one 
correspondence between MV-chains and c.o.\ groups. If $A$ is an MV-chain, 
then $C(A)=G_A/\ZZ u_A$ is a c.o.\ group. Note that if 
$C$ is a c.o.\ group with at least three elements, then $A(C)=C$. It follows that 
if $C$ contains at least three elements, then the unital linearly ordered groups 
$(uw(C), u_C)$ and $\Gamma(A(C)\cup \{\un\},\un)$ are isomorphic. \\ 
\indent  The following lemma shows that the construction of $\Gamma(A(C)\cup \{\un\},\un)$, 
in the linearly ordered case, is similar to the construction of the unwound of a c.o.\ group.
\begin{Lemma} (\cite[Lemmas 5 and 6]{Ch}) Let $A$ be an MV-chain. 
Then $G_A$ is isomorphic to $\ZZ\times (A\backslash\{1\})$ lexicographically 
ordered and with the rules: $(m,x)+(n,y)=(m+n,x\oplus y)$ if $x\oplus y< 1$ and 
$(m,x)+(n,y)=(m+n+1,x\odot y)$ otherwise. \end{Lemma}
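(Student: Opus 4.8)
The plan is to make $G_A$ explicit through the good-sequence construction recalled above and to use that an MV-chain is linearly ordered, so that addition modulo the strong unit either stays below the unit or produces a single carry. Throughout write $u := u_A = 1$ for the strong unit of $G_A$, recall that $A = [0,u]$, so $A \setminus \{1\} = [0,u[$, and that for $x,y \in [0,u]$ one has $x \oplus y = (x+y)\wedge u$ and $x \odot y = (x+y-u)\vee 0$. First I would determine the shape of a good sequence in the chain case. By Remark \ref{rk210} the defining condition $x_i \oplus x_{i+1} = x_i$ is equivalent to $(u - x_i)\wedge x_{i+1} = 0$, and since $A$ is linearly ordered an infimum of two nonnegative elements vanishes only if one of them is $0$; hence $x_i = u$ or $x_{i+1} = 0$. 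Thus every good sequence has the form $(u,\dots,u,x,0,0,\dots)$ with $m \ge 0$ leading copies of $u$ followed by a single $x \in [0,u[\, = A\setminus\{1\}$, and by Remark \ref{rk212} it represents $mu + x$ in $G_A$. In particular the positive cone of $G_A$ is $\{mu+x \mid m\in\NN,\ x\in A\setminus\{1\}\}$.

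Next I would extend this description to all of $G_A$. Since $A$ is a chain, $G_A$ is a linearly ordered group with strong unit $u$, so every $g\in G_A$ admits a unique decomposition $g = mu + x$ with $m\in\ZZ$ and $x\in[0,u[$ (take $m$ maximal with $mu\le g$; uniqueness follows since $0\le x,x'<u$ forces $|m-m'|u<u$). This gives a bijection $\Phi\colon \ZZ\times(A\setminus\{1\}) \to G_A$, $(m,x)\mapsto mu+x$. Equipping the domain with the lexicographic order of $(\ZZ,\le)$ and $(A\setminus\{1\},\le)$, the map $\Phi$ is an order isomorphism: for $m<n$ one has $mu+x<(m+1)u\le nu\le nu+y$, while for $m=n$ the comparison reduces to $x\le y$.

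It then remains to verify that $\Phi$ carries the addition rule of the statement into the group law of $G_A$, i.e. that $\Phi$ of the stated sum equals $(mu+x)+(nu+y)=(m+n)u+(x+y)$. Here $x,y\in[0,u[$ give $x+y\in[0,2u[$, and the whole point is one dichotomy. If $x+y<u$, then $x\oplus y = x+y<u=1$ and $x\odot y=0$, so $(m+n)u+(x+y)=\Phi(m+n,\,x\oplus y)$, matching the first clause. If $x+y\ge u$, then $x\oplus y=u=1$, so the second clause applies, and $x\odot y=x+y-u\in[0,u[\,=A\setminus\{1\}$, whence $(m+n)u+(x+y)=(m+n+1)u+(x\odot y)=\Phi(m+n+1,\,x\odot y)$. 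In both cases the value agrees with $\Phi$ of the stated rule, so $\Phi$ is a group isomorphism; being also an order isomorphism, it is an $\ell$-isomorphism.

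The only genuinely delicate step, and the one I would treat most carefully, is the first: showing that linearity collapses good sequences to the two-parameter family $(u^m,x)$, for this is exactly what makes the group of differences split as $\ZZ\times(A\setminus\{1\})$ rather than something larger. Once the decomposition $g=mu+x$ is available, the dichotomy $x\oplus y<1$ versus $x\oplus y=1$ is precisely the $\oplus$/$\odot$ alternative of the statement, and the remaining checks are routine.
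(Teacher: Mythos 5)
Your proof is correct, but it takes a genuinely different route from the paper: the paper offers no argument at all for this lemma, quoting it from Chang's 1959 completeness paper (Lemmas 5 and 6), where the enveloping group is built directly from the MV-chain on pairs $(m,x)$ and the group axioms are verified by case analysis. You instead start from the Chang $\ell$-group as already constructed in Section \ref{section2}, and show that linearity collapses good sequences to the form $(u,\dots,u,x,0,\dots)$ via Remark \ref{rk210} (in a chain $(u-x_i)\wedge x_{i+1}=0$ forces $x_i=u$ or $x_{i+1}=0$), so the positive cone is exactly $\{mu+x \mid m\in\NN,\ x\in[0,u[\,\}$; the lexicographic order and the single-carry dichotomy then fall out of $x\oplus y=(x+y)\wedge u$ and $x\odot y=(x+y-u)\vee 0$. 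What Chang's construction buys is independence from the good-sequence machinery (it predates it, and indeed is what the paper's Section \ref{section2} apparatus generalizes); what yours buys is a short, conceptual verification once Lemmas \ref{lm27} and \ref{lem28} and Remarks \ref{rk210} and \ref{rk212} are in hand. One small point to tighten: you invoke ``since $A$ is a chain, $G_A$ is a linearly ordered group'' as a known fact, yet it is exactly the kind of statement this lemma is meant to underwrite; to keep your argument self-contained, note that it follows from your own first step, since any two elements of the positive cone, in the collapsed form $(u,\dots,u,x,0,\dots)$, are comparable (componentwise, i.e.\ lexicographically in $(m,x)$), and every element of $G_A$ is a difference of two positive elements, hence comparable with $0$. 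With that one line added, your $\Phi(m,x)=mu+x$ argument is complete and fully replaces the external citation.
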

\indent  
We will also need the following fact. 
\begin{Fact}\label{fact52}
If $\rho$ is the natural mapping from $uw(C)$ 
onto $C\simeq uw(C)/\ZZ u_C$, then for $g$, $h$ in $[0,u_C[$ we have that $\rho(g)<_0\rho(h)
\Leftrightarrow g<h$ and if $g\leq h\neq 0$, then $g<g+h<g+u_C$. So, $\rho(g+h)=\rho(g)+
\rho(h)$ if, and only if, $g+h<u_C$, which in turn is equivalent to: $\rho(g)<_0\rho(g)+\rho(h)$. 
Otherwise, we have that $\rho(g)+\rho(h)=\rho(g+h-u_C)$. 
\end{Fact}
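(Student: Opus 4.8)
The plan is to reduce everything to the description of $C$ as the wound-round $uw(C)/\ZZ u_C$ furnished by Proposition \ref{prop37} and Lemma \ref{lem38}, exploiting that by Rieger's theorem (Theorem \ref{thrieg}) the unwound $uw(C)$ is \emph{linearly} ordered, so that the set $G_{u_C}=\{x\in uw(C)\mid x\geq 0\;\&\;x\not\geq u_C\}$ appearing in Lemma \ref{lem38} is exactly the interval $[0,u_C[$. The first thing I would record is that, since $\rho$ is a group homomorphism with kernel $\ZZ u_C$, one has $\rho(u_C)=0$ and $\rho(g+h)=\rho(g)+\rho(h)$ as elements of $C$; hence the real content of the assertion is which representative in $[0,u_C[$ carries the sum. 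By Lemma \ref{lem38} the restriction of $\rho$ to $G_{u_C}=[0,u_C[$ is a bijection onto $C$, so each element of $C$ has a unique such representative, and I would phrase the whole Fact in terms of these canonical representatives.

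First I would prove the order equivalence $\rho(g)<_0\rho(h)\Leftrightarrow g<h$ for $g,h\in[0,u_C[$. Unwinding the definition, $\rho(g)<_0\rho(h)$ means $R(\rho(0),\rho(g),\rho(h))$ together with $\rho(g)\neq\rho(h)$. Applying the characterization of $R$ in Lemma \ref{lem38} to the three elements $0,g,h\in G_{u_C}=[0,u_C[$, the relation $R(\rho(0),\rho(g),\rho(h))$ is equivalent to one of $0<g<h$, $g<h<0$, $h<0<g$; the last two are impossible because $g,h\geq 0$, so it reduces to $0<g<h$, and together with the injectivity of $\rho$ on $[0,u_C[$ this yields exactly $g<h$ (the degenerate cases $g=0$ or $g=h$ being checked directly).

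Next I would dispose of the monotonicity clause: if $0\leq g<u_C$ and $0<h<u_C$ (this is the hypothesis $g\leq h\neq 0$, where $g\leq h$ is harmless), then adding $g$ to $0<h<u_C$ gives $g<g+h<g+u_C$ purely by compatibility of $+$ with the linear order on $uw(C)$. Finally, for the main assertion I would use that $0\leq g+h<2u_C$, so exactly one of $g+h<u_C$ or $g+h\geq u_C$ holds. In the first case $g+h\in[0,u_C[$ is itself the canonical representative, and since $h>0$ we have $g<g+h$, so the order equivalence gives $\rho(g)<_0\rho(g+h)=\rho(g)+\rho(h)$. In the second case $g+h-u_C\in[0,u_C[$ is the canonical representative and $\rho(g+h-u_C)=\rho(g)+\rho(h)$ because $\rho(u_C)=0$; here $g+h-u_C<g$ (as $h<u_C$), whence the order equivalence gives $\neg\bigl(\rho(g)<_0\rho(g)+\rho(h)\bigr)$. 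Combining the two cases yields $g+h<u_C\Leftrightarrow\rho(g)<_0\rho(g)+\rho(h)$ together with the stated formula for the representative.

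I expect no deep obstacle here; the only points requiring care are the correct reading of the statement (the displayed equation concerns the canonical representative in $[0,u_C[$, not an identity of group elements, which holds automatically by homomorphy) and the handling of the degenerate values $g=0$ and $h=0$, which is precisely why the clause $h\neq 0$ is needed for the equivalence with $\rho(g)<_0\rho(g)+\rho(h)$ to hold.
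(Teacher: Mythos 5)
Your proof is correct. Note first that the paper offers no proof of this Fact at all: it is stated as an unproved observation, evidently meant to be read off from the explicit model of the unwound given just before it ($uw(C)\simeq\ZZ\times C$ with the lexicographic order, $u_C=(1,0)$, and the addition rule $(m,x)+(n,y)=(m+n,x+y)$ or $(m+n+1,x+y)$ according to whether $\min_0(x,y)<_0x+y$), in which every clause of the Fact is a one-line computation with pairs $(0,x)$. You instead work abstractly with the quotient presentation, invoking the injectivity statement of Proposition \ref{prop37}(1) and the characterization of $R$ on $G_{u_C}$ from Lemma \ref{lem38}, after observing that linearity of $uw(C)$ makes $G_{u_C}=[0,u_C[$; this is a genuinely different and arguably cleaner route, since it never needs the concrete $\ZZ\times C$ construction and would apply verbatim to any unital linearly ordered group $(G,u)$ wound round as in Proposition \ref{prop37}, whereas the paper's implicit argument buys immediacy at the cost of first trusting the displayed model of $uw(C)$. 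Two further points in your write-up deserve explicit approval: you correctly diagnose that, read literally, $\rho(g+h)=\rho(g)+\rho(h)$ is automatic by homomorphy, so the Fact must be understood as identifying which element of the transversal $[0,u_C[$ represents $\rho(g)+\rho(h)$ (this is exactly how the Fact is used in the proof of Proposition \ref{prop53}); and you correctly isolate $h\neq 0$ as indispensable for the equivalence with $\rho(g)<_0\rho(g)+\rho(h)$, which indeed fails at $h=0$ ($g+h<u_C$ holds while $\rho(g)<_0\rho(g)$ does not). Your case analysis for $g+h\geq u_C$, deriving $\neg(\rho(g)<_0\rho(g)+\rho(h))$ from $g+h-u_C<g$ via the order equivalence rather than via antisymmetry, is complete once one checks $g+h-u_C\in[0,u_C[$, which you do; I see no gap.
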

\indent  
Now, we prove that this correspondence between MV-chains and c.o.\ 
groups also preserves elementary equivalence. 
\begin{Proposition}\label{prop53} 
Let $A$ be an MV-chain. \\ 
The c.o.\ group $C(A)$, in the language $L_c$, is interpretable in the 
$L_{MV}$-structure $A$. \\
The $L_{MV}$-structure $A$ is interpretable in the $L_c$ structure $C(A)\cup\{\un\}$. \\
If 
$A$ and $A'$ are MV-chains, then:\\ 
$A\equiv A'\Leftrightarrow C(A)\cup\{\un \}\equiv C(A')\cup\{\un \}\Leftrightarrow 
C(A)\equiv C(A')$, and \\
$A\prec A'\Leftrightarrow C(A)\cup\{\un \}\prec C(A')\cup\{\un \}\Leftrightarrow 
C(A)\prec C(A')$. 
\end{Proposition}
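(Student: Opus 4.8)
The plan is to produce interpretations in both directions between $A$ and $C(A)$, add one auxiliary interpretation relating $C(A)$ to $C(A)\cup\{\un\}$, and then feed everything into the Reduction Theorem \ref{th27}; the two displayed interpretability statements are the first two of these.

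\emph{Interpreting $C(A)$ in $A$.} Recall the structure of the Chang $\ell$-group (\cite[Lemmas 5 and 6]{Ch}): $G_A\cong \ZZ\times(A\setminus\{1\})$ lexicographically ordered, with $u_A=(1,0)$, so that $\ZZ u_A=\ZZ\times\{0\}$. Hence the cosets of $C(A)=G_A/\ZZ u_A$ are in bijection with $A\setminus\{1\}$ via $x\mapsto\rho(x)$, and I take $T_1=\{x\in A\mid x\neq 1\}$, which is $L_{MV}$-definable. By the addition rule of $G_A$ (equivalently Fact \ref{fact52}) the group law of $C(A)$ translates to the $L_{MV}$-operation sending $(x,y)$ to $x\oplus y$ when $x\oplus y\neq 1$ and to $x\odot y$ otherwise; the inverse translates to $-x=\neg x$ for $x\neq 0$ (since $x\oplus\neg x=1$ forces $x\odot\neg x=0$); and by Lemma \ref{lem38} the ternary relation translates, for representatives taken in $[0,u_A[\,=A\setminus\{1\}$, to $R(\rho(x),\rho(y),\rho(z))\Leftrightarrow(x<y<z\ \text{or}\ y<z<x\ \text{or}\ z<x<y)$ in the natural MV-order. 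All three are $L_{MV}$-formulas, so $C(A)$ is interpretable in $A$.

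\emph{Interpreting $A$ in $C(A)\cup\{\un\}$.} Here $C(A)$ is the wound-round of the $\ell$-group $G_A$, so by Theorem \ref{prop321} we have $C(A)\in\mathcal{AC}$ and the MV-algebra $A(C(A))\cup\{\un\}$ of Theorem \ref{n44} is isomorphic to $A$. By Corollary \ref{cor411} this MV-algebra is interpretable in $C(A)\cup\{\un\}$, which is the second displayed assertion.

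\emph{The equivalences.} The only extra ingredient is an interpretation of $C(A)\cup\{\un\}$ in $C(A)$. I realise it on the definable quotient of $C(A)\times C(A)$ by the $L_c$-definable equivalence $(x_1,y_1)\sim(x_2,y_2)$ iff $(y_1=0=y_2\ \&\ x_1=x_2)$ or $(y_1\neq 0\neq y_2)$: the classes of the pairs $(x,0)$ give a faithful copy of $C(A)$, while the single class of all pairs with nonzero second coordinate plays the role of $\un$, and the extended operations $+,-,R$ of Notation \ref{notat45} are defined on representatives. This uses the general notion of interpretability through definable quotients (\cite[pp.~212--214]{Hod}) and only needs $C(A)$ to possess a nonzero element. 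Each of the three interpretations is given by fixed formulas not depending on the particular MV-chain, so they apply to $A,A'$ (resp. $C(A),C(A')$) \emph{by the same rules}; Theorem \ref{th27} then yields $A\equiv A'\Rightarrow C(A)\equiv C(A')$, $\,C(A)\equiv C(A')\Rightarrow C(A)\cup\{\un\}\equiv C(A')\cup\{\un\}$, and $C(A)\cup\{\un\}\equiv C(A')\cup\{\un\}\Rightarrow A\equiv A'$. These three implications close into a cycle, giving the three-fold equivalence, and the identical three implications hold with $\prec$ by the second half of Theorem \ref{th27}. The finitely many degenerate chains with $|A|\le 2$, where $C(A)$ is trivial, are elementarily equivalent only to themselves and are checked directly.

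The step I expect to be the genuine obstacle is the last interpretation: because $C(A)$ has exactly one fewer element than $A$ (and than $C(A)\cup\{\un\}$), recovering the MV-side from $C(A)\equiv C(A')$ forces us to manufacture the extra top element $\un$ inside $C(A)$ itself, which is why the definable-quotient form of interpretability is needed; every other translation above is a routine syntactic rewriting.
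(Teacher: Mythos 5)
Your overall architecture is the paper's: interpret $C(A)$ in $A$ on the definable set $A\setminus\{1\}$, then close the cycle $A\equiv A'\Rightarrow C(A)\equiv C(A')\Rightarrow C(A)\cup\{\un\}\equiv C(A')\cup\{\un\}\Rightarrow A\equiv A'$ through Theorem \ref{th27}. In the first interpretation your addition rule, keyed to whether $x\oplus y\neq 1$, is in fact slightly cleaner than the paper's (which tests $x\odot y=0$ and stumbles at the edge case $x+y=u_A$, where $x\odot y=0$ but the correct representative is $0$, not $x\oplus y=1$). Where you genuinely diverge is the second interpretation: the paper writes explicit formulas directly on $C(A)\cup\{\un\}$ (namely $\neg x=-x$ for $x\notin\{0,\un\}$, and $x\oplus y=x+y$ or $\un$ according to whether $\min_0(x,y)<_0 x+y$, justified by Fact \ref{fact52}), whereas you route through Theorems \ref{prop321} and \ref{n44} and Corollary \ref{cor411}. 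You also make explicit the middle implication, which the paper dispatches with a bare ``Now we see that''; your definable-quotient interpretation of $C(A)\cup\{\un\}$ in $C(A)$ is correct, and your observation that a quotient is unavoidable is right (for finite $C(A)$ no subset of $C(A)$ can carry $|C(A)|+1$ points, and no single nonzero element is $L_c$-definable). Since the paper's stated notion of interpretability only allows subsets, you are also right to invoke the general form from Hodges for this step.

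There is, however, one genuine gap: your route for the second interpretation fails for the three-element chain, which your degenerate-case sweep ($|A|\le 2$) does not catch. The machinery of Section 4 builds the MV-algebra on $A(C)\cup\{\un\}$, the set of \emph{non-isolated} elements, and $A(C)=C$ only when the c.o.\ group $C$ has at least three elements. For $A=\{0,x,1\}$ one has $C(A)\cong\ZZ/2\ZZ$ with $R=\emptyset$, so its unique nonzero element is isolated, $A(C(A))=\{0\}$, and the MV-algebra $A(C(A))\cup\{\un\}$ of Theorem \ref{n44} is the two-element Boolean algebra, not $A$ --- consistently, the Remark following Corollary \ref{cor411} requires $x<y$ in $]0,1[$, i.e.\ $|A|\ge 4$ for chains. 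Thus both the displayed claim that $A$ is interpretable in $C(A)\cup\{\un\}$ and the implication $C(A)\cup\{\un\}\equiv C(A')\cup\{\un\}\Rightarrow A\equiv A'$ are left unproved when $|A|=3$. The repair is cheap: either note that the paper's explicit formulas are written on all of $C(A)\cup\{\un\}$ rather than on $A(C(A))\cup\{\un\}$ and check they still interpret the three-element chain, or treat $|A|=3$ directly as you did $|A|\le 2$ (it is finite, and $C(A')\cong \ZZ/2\ZZ$ forces $G_{A'}\cong\ZZ$ with $u_{A'}=2$, hence $A'$ is again the three-element chain). With that case added, your proof is complete.
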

\begin{proof}
In the MV-chain $A$, the set $C(A)$ is interpreted by 
$A\backslash \{1\}$, the cyclic order is given by $R(x,y,z)\Leftrightarrow 
x<y<z$ or $y<z<x$ or $z<x<y$, the addition is given by $x+y=x\oplus y$ if $x\odot y=0$ 
and $x+y=x\odot y$ otherwise. Indeed, we saw in Section \ref{section2} that in $G_A$ 
$x\oplus y=(x+y)\wedge 1$, and $x\odot y=(x+y-1)\vee 0$. Since $G_A$ is linearly ordered, 
$x\oplus y=\min (x+y,1)$, and $x\odot y=\max (x+y,1)-1$. Let $z\in [0,1[$ such that 
$x+y-z\in \ZZ \cdot 1$, then $z=x+y$ if $x+y<1$ and $z=x+y-1$ otherwise. \\ 
\indent  In $C(A)\cup\{\un\}$, the set $A$ is intepreted by $C(A)\cup \{\un\}$, $\neg x$ is 
interpreted by $-x$ if 
$x\notin\{0,\un\}$, $\neg 0=\un$ and $\neg \un=0$. $\oplus$ is interpreted by 
$\un \oplus x=\un$ and for $x$, $y$ in $A(C)$ 
$x\oplus y=x+y$ if $x+y\neq 0$ and $\min_0(x,y)<_0 x+y$, $x\oplus y=\un$ if 
$x\neq 0\neq y$ and $x+y\leq_0 \min_0(x,y)$, and $0\oplus 0=0$. Indeed, we have seen 
in Fact \ref{fact52} that 
if $g$, $h$ are the elements of $[0,u_C[ \subset uw(C)$ such that $\rho(g)=x$ and $\rho(h)=y$, 
then $g+h<u_C \Leftrightarrow \min_0(x,y)<_0 x+y$. \\ 
\indent  It follows from Theorem \ref{th27} that 
$A\equiv A'\Rightarrow C(A)\equiv C(A')$ and $C(A)\cup\{\un \}\equiv C(A')\cup\{\un \} 
\Rightarrow A\equiv A'$. Now we see that $C(A)\equiv C(A') \Rightarrow 
C(A)\cup\{\un \}\equiv C(A')\cup\{\un \}$. The last proposition can be proved in the same way. 
\end{proof} 
\begin{defi}{\bf Notation.} We consider the language $L_{oMV}=(0,+,-,\leq,\oplus,
\neg)$. The $L_o$-structure $\ZZ$ will be seen as a $L_{oMV}$-structure where 
$x\oplus y =z \Leftrightarrow x=y=z=0$ and $\neg x=y\Leftrightarrow x=y=0$. If 
$A$ is an MV-chain, then it will be seen as a $L_{oMV}$-structure, where $x+y=z\Leftrightarrow 
x=y=z=0$ and $x-y=z\Leftrightarrow x=y=z=0$. 
\end{defi} 
\begin{Proposition}\label{prop55} 
Let $A$ be an MV-chain. \\ 
The $L_{MV}$-structure $A$ is interpretable in the $L_{loZu}$-structure $(G_A,\ZZ u_A)$ 
(resp.\ in the $L_{lou}$-structure $(G_A,u_A)$). \\
The $L_{loZu}$-structure $(G_A,\ZZ u_A)$ (resp.\ the $L_{lou}$-structure $(G_A,u_A)$) is interpretable in the $L_{oMV}$-structure $\ZZ \times A$. \\
If $A$ and $A'$ are MV-chains, then: \\ 
$(G_A,\ZZ u_A)\equiv (G_{A'},\ZZ u_{A'})\Leftrightarrow \ZZ\times A \equiv \ZZ\times A' 
\Leftrightarrow A\equiv A'$, and \\
$(G_A,u_A)\equiv (G_{A'},u_{A'})\Leftrightarrow \ZZ\times A \equiv \ZZ\times A' 
\Leftrightarrow A\equiv A'$. \\ 
The same holds with $\prec$ instead of $\equiv$. 
\end{Proposition}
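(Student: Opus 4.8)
The plan is to establish the two displayed interpretability claims and then feed them, together with Theorems~\ref{th26} and~\ref{th27}, into a short cyclic chain of implications. The first claim, that the $L_{MV}$-structure $A$ is interpretable in $(G_A,\ZZ u_A)$ and in $(G_A,u_A)$, is already available: it is exactly the observation at the end of Subsection~\ref{subsec21}, where one uses $A=[0,u_A]$, $a\oplus b=(a+b)\wedge u_A$ and $\neg a=u_A-a$. Note that in $L_{loZu}$ the constant $u_A$ is itself definable as the least strictly positive element of the predicate $\ZZ u_A$, so the two versions coincide. Thus the only genuinely new point is the second claim, the interpretation of $(G_A,\ZZ u_A)$ (resp.\ $(G_A,u_A)$) in the $L_{oMV}$-structure $\ZZ\times A$.

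For that second interpretation I would work from the isomorphism $G_A\simeq \ZZ\times(A\setminus\{1\})$ recalled above (\cite[Lemmas 5 and 6]{Ch}), under which $0=(0,0)$, $u_A=(1,0)$, $\ZZ u_A=\ZZ\times\{0\}$, the order is lexicographic, and addition obeys the carry rule $(m,x)+(n,y)=(m+n,x\oplus y)$ if $x\oplus y<1$ and $(m+n+1,x\odot y)$ otherwise. In $\ZZ\times A$ the symbols $+,-,\oplus,\neg$ are read as relations interpreted coordinatewise, so the relation $+$ holds only on triples whose $A$-coordinates vanish and $\oplus$ only on triples whose $\ZZ$-coordinates vanish. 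Hence the factor $\ZZ\times\{0\}$ is defined by $\exists v\,+(w,w,v)$ and the factor $\{0\}\times A$ by $\exists v\,\oplus(w,w,v)$; on the former the relation $+$ and the order give a copy of $(\ZZ,+,\leq)$, on the latter $\oplus,\neg,\leq$ give a copy of $A$. The next step is to define the two coordinate projections for the coordinatewise order: since $\ZZ\times\{0\}$ is a discrete chain, $\pi_1(w)$ is the greatest element of $\ZZ\times\{0\}$ lying below $w$, and $\pi_2(w)$ is the greatest element of $\{0\}\times A$ below $w$ when $\pi_1(w)\geq 0$ and the least element of $\{0\}\times A$ above $w$ otherwise. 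With $\pi_1,\pi_2$ in hand the lexicographic order, the twisted addition (splitting on whether the $\oplus$ of the second coordinates reaches the top of $A$), the constant $u_A$ and the predicate $Zu$ (namely $\pi_2(w)=0$) all become $L_{oMV}$-formulas, and the domain $T_1=\{(m,x):x\neq1\}$ is defined by requiring $\pi_2(w)$ to differ from the top element of $A$ (the unique $\neg$-image of $0$). Because $G_A$ is linearly ordered, $\wedge$ and $\vee$ are just $\min$ and $\max$ for this lexicographic order, so they are definable as well.

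Granting the two interpretations, the equivalences follow from a three-term cycle. From $A\equiv A'$ one gets $\ZZ\times A\equiv\ZZ\times A'$ by Theorem~\ref{th26}(a) (using $\ZZ\equiv\ZZ$); from $\ZZ\times A\equiv\ZZ\times A'$ one gets $(G_A,\ZZ u_A)\equiv(G_{A'},\ZZ u_{A'})$ by the second interpretation and the Reduction Theorem~\ref{th27}; and from $(G_A,\ZZ u_A)\equiv(G_{A'},\ZZ u_{A'})$ one recovers $A\equiv A'$ by the first interpretation and Theorem~\ref{th27}. The same cycle with $u_A$ in place of $\ZZ u_A$ gives the $(G_A,u_A)$ statement, and replacing Theorem~\ref{th26}(a) by Theorem~\ref{th26}(b) and ``$\equiv$'' by ``$\prec$'' throughout (again with $\ZZ\prec\ZZ$) yields the elementary-substructure versions.

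I expect the one delicate step to be the second interpretation, precisely because the order carried by the product $\ZZ\times A$ is the coordinatewise order whereas the order of $G_A$ is lexicographic, and the group law of $G_A$ carries a carry term. The crux is therefore the definability of the coordinate projections $\pi_1,\pi_2$; once it is observed that the $\ZZ$-factor is a discrete chain (so ``greatest element below'' exists and is first-order) and that the bottom and top of $A$ let one reconstruct the $A$-coordinate by a meet or a join, everything else is a routine translation of the explicit rules of \cite[Lemmas 5 and 6]{Ch}.
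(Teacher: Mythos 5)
Your proof is correct and follows essentially the same route as the paper: the same two interpretations (of $A$ in $(G_A,\ZZ u_A)$ resp.\ $(G_A,u_A)$ via $[0,u_A]$, and of the Chang group in $\ZZ\times A$ via the representation $\ZZ\times(A\setminus\{1\})$ with lexicographic order and carry addition from \cite[Lemmas 5 and 6]{Ch}), closed into the same cycle by Theorems \ref{th26} and \ref{th27}. The only difference is that you explicitly verify the first-order definability, inside the coordinatewise-ordered product structure, of the two factors, the coordinate projections, and hence of the lexicographic order and the twisted sum --- a verification the paper's proof states without detail.
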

\begin{proof}
In $(G_A,\ZZ u_A)$ (resp.\ in $(G_A,u_A)$), $1$ is the smallest positive 
element of $\ZZ u$ (resp.\ $1=u$), the set $A$ is interpreted by 
$\{x\in G_A\mid 0\leq x\leq u_A\}$, 
$x\oplus y =\min(x+y,u_A)$, $\neg x=u_A-x$. \\
\indent  In $\ZZ \times A$, $G_A$ is 
interpreted by $\ZZ \times (A\backslash \{u\})$, $\ZZ u$ is interpreted by $\ZZ \times \{0\}$ 
(resp.\ $u=(1,0)$). The order relation is the lexicographic order: $(m,x)\leq (n,y) 
\Leftrightarrow m<n$ or $(m=n$ and $x\leq y)$. The sum is defined by 
$(m,x)+(n,y)=(m+n,x\oplus y)$ if $x\oplus y <1$, and $(m,x)+(n,y)=(m+n+1,x\odot y)$ if 
$x\oplus y=1$. \\
\indent  It follows from Theorem \ref{th27}) that  
$(G_A,\ZZ u_A)\equiv (G_{A'},\ZZ u_{A'})\Rightarrow A\equiv A'$, $(G_A,u_A)\equiv (G_{A'},u_{A'})
\Rightarrow A\equiv A'$, $\ZZ\times A \equiv \ZZ\times A' \Rightarrow 
(G_A,\ZZ u_A)\equiv (G_{A'},\ZZ u_{A'})$  and $\ZZ\times A \equiv \ZZ\times A' \Rightarrow 
(G_A,u_A)\equiv (G_{A'},u_{A'})$ (the same holds with $\prec$). 
Now, we deduce from Theorem \ref{th26} that in the language $L_{oMV}$: 
$A\equiv A'\Rightarrow \ZZ\times A \equiv \ZZ\times A'$. Now, clearly, if 
$A\equiv A'$ in $L_{MV}$, then $A\equiv A'$ in $L_{oMV}$ (the same holds with $\prec)$. 
\end{proof} 
\indent  
Thanks to this transfert principle, we deduce from \cite{LL 13} similar results in the case of 
MV-chains. In particular we characterize pseudofinite and pseudo-hyperarchimedean MV-chains. 
\begin{defis}(\cite[Chapter 6]{CDM}) \\ 
1) In an ordered set, by an {\it atom}  we mean an element $x$ such that $x>0$
and whenever $y\leq x$ then either $y=0$ or $y=x$ (\cite[Definitions 6.4.2 and 6.7.1]{CDM}).\\ 
2) An $\ell$-group is {\it hyperarchimedean} if for every positive $x$ and $y$ there exists 
$n\in \NN^*$  such that $nx\wedge y=(n+1)x\wedge y$ (see \cite[Theorem 14.1.2]{BKW}). \\
3) An MV-algebra is {\it atomic} if for each 
$x\neq 0$ there is an atom $y$ with $y\leq x$. It is {\it atomless} if no element is an atom 
(\cite[Definition 6.7.1]{CDM}).\\
4) An element $x$ of an MV-algebra is {\it archimedean} if there exists $n\in \NN^*$
such that $\neg x\vee n.x=1$. This is equivalent to saying that there exists $n\in \NN$ 
such that $n.x=(n+1).x$ (\cite[Corollary 6.2.4]{CDM}). \\
5) An MV-algebra is {\it hyperarchimedean} if all its elements are archimedean (\cite[Definition 6.3.1]{CDM}). \\
6) An MV-algebra is {\it simple} if it embeds in $[0,1]_{\RR}$ (\cite[Theorem 3.5.1]{CDM}). 
\end{defis}
\indent  
Note that if an MV-chain $A$ is atomic, then it contains only one atom, and the underlying ordered set 
is discretely ordered. If it is atomless, then the underlying ordered set is densely ordered. \\
\indent  Saying that an MV-chain is hyperarchimedean is equivalent to saying that it is simple. \\
\indent  Recall the notations, for $x$ in an MV-algebra, $2.x=x\oplus x$, $x^2=x\odot x$ and so on.
\begin{defis} Let $A$ be an MV-chain. \\
1) We will say that $A$ is {\it regular} if for every integer $n\geq 2$ and every 
$0<x_1<\cdots<x_n$ in $A$ there exists $x\in A$ such that $x_1\leq n.x\leq x_n$, 
and $0<x<2.x<\cdots<(n-1).x<n.x$. \\
2) We will say that $A$ is {\it pseudo-simple} if $A$ belongs to the elementary 
class generated by the simple MV-chains. \\
3) We will say that $A$ is {\it pseudofinite} if $A$ belongs to the elementary 
class generated by the finite MV-chains. 
\end{defis}
\indent  
Let $A$ be an MV-chain, it is easy to see that $A$ is regular if, and only if, $C(A)$ 
is c-regular, and since the unwound of $C(A)$ is isomorphic to 
the Chang $\ell$-group $G_A$ of $A$, this is equivalent to saying that $G_A$ is regular 
(see Definition \ref{def331}). 
One can also see that $A$ is atomic if, and only if, $C(A)$ is discrete. Moreover, 
$A$ is simple if, and only if, $G_A$ is archimedean. Note that a linearly ordered group is 
hyperarchimedean if, and only if, it is archimedean. \\
\indent  In the MV-chain $A(C)$, the formula $R(0,x,2x,\dots, (p ^n-1)x)$ can be reformulated as 
$0<x<2.x<\cdots<(p^n-1).x$, (which is equivalent to $x\neq 0$ and 
$0=x^2=\cdots =x^{p^n-1}$, since 
$x^2=0\Leftrightarrow 2.x\leq 1$) 
hence we can define formulas $D_{p^n,k}$ in MV-chains. 
\begin{defi} If $A$ is an atomic and not simple MV-chain, then 
the atom $\varepsilon_A$ of $A$ (which is the smallest positive element) is definable, 
we can assume that it lies in the language. For a prime $p$, for $n\in \NN^*$ and $k\in \{0,\dots,p^n-1\}$, 
we denote by $D_{p^n,k}$  the formula: 
$\exists x, \;0<x<2.x<\cdots< (p ^n-1).x)\wedge p^n.x=k.\varepsilon_A$. 
\end{defi}
\indent  In the same way, the torsion subgroup has an analogue in MV-chains. 
\begin{defi} Let $x$ be an element of an MV-chain. 
We will say that $x$ is a {\it torsion element} if there exists $n\in \NN^*$, such that 
$n.x=1$ and $x^n=0$.  
\end{defi}
\indent  
Note that $x\in A\backslash \{1\}$ is a torsion element in the MV-chain $A$ if, and only if, 
it is a torsion element in the group $C(A)$. 
\begin{Lemma}\label{lmav59}  
Let $A$ and $(A_i)_{i\in \Ni^*}$ be MV-chains, $U$ be an ultrafilter 
on $\NN^*$ and $\Pi A_i$ be the ultraproduct of $(A_i)_{i\in \Ni^*}$. Then 
$A\equiv \Pi A_i\Leftrightarrow C(A)\equiv \Pi C(A_i)$. 
\end{Lemma}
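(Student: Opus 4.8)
The plan is to reduce the statement to Proposition \ref{prop53} by checking that the assignment $A\mapsto C(A)$ commutes with the formation of ultraproducts.

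First I would record that $\Pi A_i$ is again an MV-chain, so that $C(\Pi A_i)$ is defined: the axioms MV1)--MV6) are equations, and linearity of the natural order is the sentence $\forall x\,\forall y\,(x\leq y\vee y\leq x)$, all of which hold in each $A_i$ and hence, by the fundamental property of ultraproducts recalled in Subsection \ref{subsec21}, in $\Pi A_i$.

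The main step, and the one requiring care, is to show that $C(\Pi A_i)$ and $\Pi C(A_i)$ are isomorphic as $L_c$-structures. Here I would use that the interpretation of $C(A)$ in $A$ exhibited in the proof of Proposition \ref{prop53} is a fixed, parameter-free scheme: the domain is $A\setminus\{1\}$, the group law is given by $x\oplus y$ when $x\odot y=0$ and by $x\odot y$ otherwise, the inverse is $\neg x$ (with $-0=0$), and $R(x,y,z)$ is $x<y<z\vee y<z<x\vee z<x<y$. Since $1$ is a definable constant, an element $[(x_i)]$ of $\Pi A_i$ is distinct from $1$ exactly when $\{i\mid x_i\neq 1\}\in U$; hence the underlying set of $\Pi C(A_i)$, namely the classes with a representative in $A_i\setminus\{1\}$ on a $U$-large set, coincides with $(\Pi A_i)\setminus\{1\}$, the underlying set of $C(\Pi A_i)$. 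As each defining formula is evaluated coordinatewise, the fundamental property of ultraproducts shows that the group law, the inverse and the relation $R$ of $\Pi C(A_i)$ agree with those obtained by applying the same formulas inside $\Pi A_i$, which are by definition the operations of $C(\Pi A_i)$. Thus $C(\Pi A_i)\cong\Pi C(A_i)$.

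With this in hand the conclusion is immediate. Applying Proposition \ref{prop53} to the two MV-chains $A$ and $\Pi A_i$ gives $A\equiv\Pi A_i\Leftrightarrow C(A)\equiv C(\Pi A_i)$, while the isomorphism above yields $C(\Pi A_i)\equiv\Pi C(A_i)$. Combining the two, $A\equiv\Pi A_i\Leftrightarrow C(A)\equiv\Pi C(A_i)$, which is the desired equivalence. I expect the only delicate point to be the verification that the interpretation commutes with the ultraproduct; this is routine precisely because the scheme of Proposition \ref{prop53} uses no parameters beyond the definable constants $0$ and $1$.
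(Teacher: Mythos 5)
Your proof is correct, but it takes a genuinely different route from the paper's. The paper argues sentence by sentence: using the uniform, parameter-free interpretation scheme underlying Proposition \ref{prop53}, each $L_{MV}$-sentence $\Phi$ has a corresponding $L_c$-sentence $\Phi_c$ with $B\models\Phi\Leftrightarrow C(B)\models\Phi_c$ for every MV-chain $B$; hence $\{i\in\NN^*\mid A_i\models\Phi\}\in U\Leftrightarrow\{i\in\NN^*\mid C(A_i)\models\Phi_c\}\in U$, and applying \L{}o\'s's theorem on both sides gives the equivalence directly, without ever forming $C(\Pi A_i)$. You instead prove the structural commutation $C(\Pi A_i)\cong\Pi C(A_i)$ and then invoke Proposition \ref{prop53} for the pair $A$, $\Pi A_i$. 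Both arguments rest on the same two ingredients --- the fixed interpretation scheme and the fundamental theorem of ultraproducts --- but the organization differs, and each buys something: your decomposition yields a strictly stronger intermediate fact (an isomorphism, not merely elementary equivalence, between $C(\Pi A_i)$ and $\Pi C(A_i)$), and it packages the two-way sentence translation inside the already-proved Proposition \ref{prop53}, whereas the paper's one-line truth-set computation is shorter but leaves the converse translation ($L_c$-sentences back into $L_{MV}$-sentences, needed for the direction $A\equiv\Pi A_i\Rightarrow C(A)\equiv\Pi C(A_i)$) to the reader under ``the equivalence follows.'' The bookkeeping you flag as the delicate point is handled correctly: $\Pi A_i$ is an MV-chain since the axioms and linearity of $\leq$ are first-order; $1=\neg 0$ is a definable constant, so $[(x_i)]\neq 1$ in $\Pi A_i$ exactly when $\{i\mid x_i\neq 1\}\in U$, which lets you adjust representatives on a $U$-small set and identify the underlying set of $\Pi C(A_i)$ with $(\Pi A_i)\setminus\{1\}$; and since the group law, the inverse (with $-0=0$) and $R$ are given by a parameter-free scheme, \L{}o\'s's theorem makes the identification an $L_c$-isomorphism.
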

\begin{proof} Let $\Phi$ be a $L_{MV}$-sentence and $\Phi_c$ be the corresponding 
$L_c$-sentence. Then: $A\models \Phi\Leftrightarrow C(A)\models \Phi_c$, and for every 
$i$ in $\NN^*$, $A_i\models \Phi\Leftrightarrow C(A_i)\models \Phi_c$. Hence 
$\{i\in \NN^* \mid A_i\models \Phi\} \in U \Leftrightarrow 
\{i\in \NN^* \mid C(A_i)\models \Phi_c\}\in U$. The equivalence follows. 
\end{proof}
\indent  So various theorems proved in \cite{LL 13}, can be expressed in terms of MV-chains. 
\begin{Theorem}\label{thm511}
An atomless MV-chain is pseudo-simple if, and only if, it is regular. 
\end{Theorem}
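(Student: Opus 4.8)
The plan is to transfer the statement through the functorial correspondence $A\mapsto C(A)$ of Section \ref{section5} and then invoke the result on dense c.o.\ groups from \cite{LL 13}. Each notion appearing in the theorem has a counterpart on the c.o.\ group side: by the remarks preceding Lemma \ref{lmav59}, an MV-chain $A$ is regular if and only if $C(A)$ is c-regular; if $A$ is atomless then $C(A)$ is densely ordered; and $A$ is simple if and only if $G_A=uw(C(A))$ is archimedean, equivalently $C(A)$ is c-archimedean. So the first task is to match \emph{pseudo-simple} with \emph{pseudo-c-archimedean}.

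First I would establish the equivalence
$$A\ \text{is pseudo-simple}\quad\Longleftrightarrow\quad C(A)\ \text{is pseudo-c-archimedean}.$$
Recall that the elementary class generated by a class $\mathcal{K}$ consists exactly of the structures elementarily equivalent to an ultraproduct of members of $\mathcal{K}$. Hence $A$ is pseudo-simple if and only if $A\equiv\prod_U S_i$ for some family $(S_i)$ of simple MV-chains and some ultrafilter $U$. By Lemma \ref{lmav59} this holds if and only if $C(A)\equiv\prod_U C(S_i)$. Now $S_i$ is simple exactly when $C(S_i)$ is c-archimedean, and, by the one-to-one functorial correspondence between MV-chains and c.o.\ groups recalled at the start of Section \ref{section5}, every c-archimedean c.o.\ group $D$ is of the form $C(S)$ for a simple MV-chain $S$ (namely $S=A(D)\cup\{\un\}$). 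Therefore the right-hand condition says precisely that $C(A)$ is elementarily equivalent to an ultraproduct of c-archimedean c.o.\ groups, i.e.\ that $C(A)$ is pseudo-c-archimedean.

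It then remains to assemble the theorem. Assume $A$ is atomless, so that $C(A)$ is a dense c.o.\ group. By part (1) of the theorem on dense c.o.\ groups from \cite{LL 13}, $C(A)$ is pseudo-c-archimedean if and only if it is c-regular. Chaining this with the two translations above gives
$$A\ \text{pseudo-simple}\iff C(A)\ \text{pseudo-c-archimedean}\iff C(A)\ \text{c-regular}\iff A\ \text{regular},$$
which is the desired equivalence.

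The delicate step is the first equivalence, where one passes through ultraproducts: Lemma \ref{lmav59} supplies the index-by-index transfer for a fixed index set and ultrafilter, so the points to verify are (i) that membership in the elementary class generated by $\mathcal{K}$ is the same as being elementarily equivalent to an ultraproduct of members of $\mathcal{K}$, a standard consequence of {\L}o\'s's theorem, and (ii) that each relevant $C(S_i)$ (resp.\ $D_i$) corresponds to a simple MV-chain; the latter is immediate from the functorial correspondence, and since $C(A)$ is dense the degenerate c.o.\ groups with fewer than three elements are $U$-negligible and cause no difficulty.
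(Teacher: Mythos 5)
Your proposal is correct and takes essentially the approach the paper intends: Theorem \ref{thm511} is stated without explicit proof precisely as the transfer, via Lemma \ref{lmav59} and the remarks matching regular/atomless/simple with c-regular/dense/c-archimedean, of part 1) of the theorem on dense c.o.\ groups quoted from \cite{LL 13}, which is exactly your chain of equivalences. Your extra care is sound, and your hedge about degenerate c.o.\ groups is in fact unnecessary, since the trivial group and $\ZZ/2\ZZ$ are themselves $C(S)$ for the simple chains $\{0,1\}$ and $\{0,x,1\}$.
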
 
\begin{Theorem}\label{th511a}
1) Any atomless regular MV-chain is elementarily equivalent to some simple atomless MV-chain.  \\ 
2) Any two atomless regular MV-chains are elementarily equivalent if, and only if, 
their subchain of torsion elements are isomorphic and their Chang $\ell$-groups have the same family 
of prime invariants of Zakon. 
\end{Theorem}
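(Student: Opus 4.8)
The plan is to reduce both parts to the results on cyclically ordered groups quoted above from \cite{LL 13}, via the transfer principle of Proposition \ref{prop53} and the dictionary established after Proposition \ref{prop55} between first-order properties of an MV-chain $A$ and of its c.o.\ group $C(A)$. Recall from that discussion: $A$ is regular iff $C(A)$ is c-regular; $A$ atomless implies $(C(A),\leq_0)$ is densely ordered, so $C(A)$ is a dense c.o.\ group; $A$ is simple iff $G_A=uw(C(A))$ is archimedean, i.e.\ iff $C(A)$ is c-archimedean; and the torsion elements of $A\backslash\{1\}$ coincide with the torsion elements of the group $C(A)$.

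For part 1), I would take an atomless regular MV-chain $A$, so that $C(A)$ is a dense c-regular c.o.\ group. Part 1) of the above-quoted theorem of \cite{LL 13} then furnishes a c-archimedean dense c.o.\ group $C'$ with $C(A)\equiv C'$. By the functorial one-to-one correspondence between c.o.\ groups and MV-chains recalled at the start of Section \ref{section5} (Rieger's Theorem \ref{thrieg} combined with the $\Gamma$-correspondence), $C'\simeq C(A')$ for some MV-chain $A'$; since $C'$ is dense and c-archimedean, $A'$ is atomless and simple. Proposition \ref{prop53} then converts $C(A)\equiv C(A')$ into $A\equiv A'$, which is the claim.

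For part 2), let $A$, $A'$ be atomless regular MV-chains. Proposition \ref{prop53} gives $A\equiv A'$ iff $C(A)\equiv C(A')$, and both $C(A)$, $C(A')$ are dense and c-regular. Part 2) of the same theorem of \cite{LL 13} then says $C(A)\equiv C(A')$ iff the torsion subgroups of $C(A)$ and $C(A')$ are isomorphic and their unwounds have the same family of prime invariants of Zakon. Translating back to the MV side via the dictionary, the torsion subgroup of $C(A)$ is the subchain of torsion elements of $A$, and the unwound of $C(A)$ is the Chang $\ell$-group $G_A$; so the two conditions become: the subchains of torsion elements of $A$ and $A'$ are isomorphic, and $G_A$, $G_{A'}$ have the same family of prime invariants of Zakon. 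This is exactly the statement.

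I do not expect a genuine obstacle here, since the argument is a translation; the only care needed is bookkeeping. In part 1) one must verify that the abstract c-archimedean group $C'$ produced by \cite{LL 13} is really of the form $C(A')$---this is precisely where the one-to-one correspondence of Section \ref{section5} is invoked---and that denseness and c-archimedeanness transfer to atomlessness and simplicity of $A'$. In part 2) the point requiring attention is matching ``the torsion subchains of $A$, $A'$ are isomorphic'' with ``the torsion subgroups of $C(A)$, $C(A')$ are isomorphic''; this holds because $A\mapsto C(A)$ sends the torsion part of $A$ to the torsion part of $C(A)$, and on these (finite cyclic) pieces the group isomorphism type and the MV-chain isomorphism type determine one another.
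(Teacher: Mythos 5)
Your proof is correct and is exactly the argument the paper intends: the paper gives no separate proof of Theorem \ref{th511a}, deducing it (via the transfer principle of Proposition \ref{prop53}, Lemma \ref{lmav59}, and the dictionary atomless/dense, regular/c-regular, simple/c-archimedean, unwound $=$ Chang $\ell$-group) from the quoted theorem of \cite{LL 13} on dense c-regular c.o.\ groups, which is precisely the translation you carry out. One small slip in your final remark: the torsion subgroup of $C(A)$ need not be finite cyclic (it is a locally cyclic subgroup, e.g.\ infinite for $A=[0,1]_{\RR}\cap \QQ$, where it is all of $\QQ/\ZZ$), but since it is a directed union of finite cyclic pieces, your identification of the group-isomorphism type with the subchain-isomorphism type still goes through.
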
 
\begin{Theorem}\label{th511} 
1) 
Any two infinite atomic regular MV-chains are
elementarily equivalent if, and only if, they satisfy the same formulas $D_{p^n,k}$. \\
2) An infinite MV-chain is pseudofinite if, and only if, it is atomic and regular. \\ 
3) Let $U$ be a non principal ultrafilter on $\NN^*$, $A$ be the ultraproduct of the 
MV-chains $[0,n]$, $p$ be a prime, $n\in \NN^*$ and 
$k\in \{0,\dots,p^n-1\}$. Then $A$ satisfies the formula $D_{p^n,k}$ if, and only if, 
$p^n\NN^* -k\in U$. 
\end{Theorem}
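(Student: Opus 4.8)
The plan is to obtain all three parts by transferring the corresponding statements about discrete cyclically ordered groups recalled from \cite{LL 13} through the correspondence $A\mapsto C(A)$. The dictionary I would use is the following: by Proposition \ref{prop53}, for MV-chains $A\equiv A'\Leftrightarrow C(A)\equiv C(A')$; by Lemma \ref{lmav59}, $A\equiv \Pi A_i\Leftrightarrow C(A)\equiv \Pi C(A_i)$; and, as noted after Lemma \ref{lmav59}, $A$ is atomic iff $C(A)$ is discrete, $A$ is regular iff $C(A)$ is c-regular, and the formula $D_{p^n,k}$ read in $A$ (using the atom $\varepsilon_A$) translates into the formula $D_{p^n,k}$ read in $C(A)$ (using the first positive element $\varepsilon_{C(A)}$), since $R(0,x,2x,\dots,(p^n-1)x)$ reformulates as $0<x<2.x<\cdots<(p^n-1).x$ and $\varepsilon_A$ corresponds to $\varepsilon_{C(A)}$.

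For part 1, I would first record an auxiliary fact: an infinite atomic MV-chain $A$ has $C(A)$ non-c-archimedean. Indeed $C(A)$ is discrete, so its unwound is a discrete linearly ordered group; were $C(A)$ c-archimedean, this unwound would be archimedean, hence isomorphic to $\ZZ$, and then $C(A)\simeq \ZZ/m\ZZ$ would be finite, contradicting that $A$, hence $C(A)$, is infinite. Granting this, given infinite atomic regular $A$, $A'$, the groups $C(A)$, $C(A')$ are infinite, discrete, c-regular and non-c-archimedean, so by the theorem of \cite{LL 13} on discrete c.o.\ groups they are elementarily equivalent iff they satisfy the same $D_{p^n,k}$. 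Reading the left side through Proposition \ref{prop53} and the right side through the translation of $D_{p^n,k}$ yields that $A\equiv A'$ iff $A$ and $A'$ satisfy the same formulas $D_{p^n,k}$.

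For part 2, I would note that finite MV-chains correspond under $C$ to finite c.o.\ groups, so by Lemma \ref{lmav59} an infinite MV-chain $A$ is pseudofinite iff $C(A)$ is pseudofinite; by \cite{LL 13} the latter holds iff $C(A)$ is discrete and c-regular, i.e.\ by the dictionary iff $A$ is atomic and regular. For part 3, I would use that the finite MV-chain $[0,n]$ has $C([0,n])\simeq \ZZ/n\ZZ$, with the atom sent to the first positive element; applying Lemma \ref{lmav59} to $A_i=[0,i]$ (so that $A=\Pi A_i$) gives $C(A)\equiv \prod_U \ZZ/i\ZZ$. Since $D_{p^n,k}$ in $A$ translates to $D_{p^n,k}$ in $C(A)$, $A$ satisfies $D_{p^n,k}$ iff $\prod_U \ZZ/i\ZZ$ does, which by part 3 of the \cite{LL 13} theorem happens iff $p^n\NN^*-k\in U$.

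The routine parts are the dictionary translations. The only genuine points to verify are the auxiliary ``infinite atomic $\Rightarrow$ non-c-archimedean'' step and the bookkeeping that the distinguished constant $\varepsilon_A$ and the term $k.\varepsilon_A$ are carried to $\varepsilon_{C(A)}$ and $k\varepsilon_{C(A)}$, so that the two versions of $D_{p^n,k}$ genuinely coincide across the interpretation. I expect the main (modest) obstacle to be making this last correspondence of $\varepsilon$ fully precise, together with confirming that the ultraproduct $\prod_U[0,i]$ indeed has $C$ elementarily equivalent to $\prod_U\ZZ/i\ZZ$ via Lemma \ref{lmav59}.
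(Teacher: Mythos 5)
Your overall route is exactly the paper's: Theorem \ref{th511} is stated there with no separate proof, as an instance of the ``transfert principle'' obtained from Proposition \ref{prop53}, Lemma \ref{lmav59} and the dictionary (atomic $\leftrightarrow$ discrete, regular $\leftrightarrow$ c-regular) applied to the quoted theorems of \cite{LL 13}. Your reconstruction of parts 1 and 2 is sound, and your auxiliary observation that an infinite atomic MV-chain $A$ has non-c-archimedean $C(A)$ (a discrete c-archimedean group has unwound isomorphic to $\ZZ$, forcing $C(A)\simeq\ZZ/m\ZZ$ finite) is a genuinely needed detail which the paper leaves implicit, since the theorem of \cite{LL 13} being invoked concerns \emph{non-c-archimedean} c-regular discrete c.o.\ groups. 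Likewise the $\varepsilon$-bookkeeping you worry about is harmless: $\rho$ is an order isomorphism from $[0,u_A[$ onto $C(A)$ carrying the atom to the first positive element.

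The step you defer as ``modest bookkeeping'' is, however, where the argument breaks if the MV-formula $D_{p^n,k}$ is read literally: the two versions of $D_{p^n,k}$ do \emph{not} coincide under the interpretation. In $A\subset G_A=[0,u]$ one has $m.x=(mx)\wedge u$, so $p^n.x$ truncates at $1$, whereas $p^nx$ in the group $C(A)$ wraps around modulo $\ZZ u$. Concretely, any witness satisfies $x\geq\varepsilon_A$, hence $p^n.x=(p^nx)\wedge u\geq (p^n\varepsilon_A)\wedge u>k\varepsilon_A=k.\varepsilon_A$ (using $k<p^n$ and $k\varepsilon_A<u$, which holds in any infinite atomic chain and in $[0,m]$ for $m>k$); so the literal formula $\exists x\,(0<x<2.x<\cdots<(p^n-1).x\;\&\;p^n.x=k.\varepsilon_A)$ is unsatisfiable in every such chain. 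By \L{}o\'s, $\prod_U[0,i]$ would then never satisfy it, contradicting part 3, and in part 1 ``same formulas $D_{p^n,k}$'' would be vacuous, wrongly making all infinite atomic regular MV-chains elementarily equivalent. The faithful translation of $p^nx=k\varepsilon_{C(A)}$ must use the interpreted addition of $C(A)$ from Proposition \ref{prop53}, in which the last summand is added with $\odot$ in the wrap-around case; in the atomic setting only that case can occur, so the correct MV-reading is $\exists x\,\bigl(0<x<2.x<\cdots<(p^n-1).x\;\&\;p^n.x=1\;\&\;((p^n-1).x)\odot x=k.\varepsilon_A\bigr)$. One checks this is what the c.o.\ witness produces: in $[0,m]$ with $m=p^nt-k$ and $x=t>k$ one gets $((p^n-1).x)\odot x=(p^nt-m)\vee 0=k.\varepsilon_A$, matching $p^nt\equiv k\varepsilon_C$ in $\ZZ/m\ZZ$. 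With $D_{p^n,k}$ so amended (which is plainly what the paper intends by its shorthand), your transfer argument goes through verbatim; without it, the equivalences you assert between the $D_{p^n,k}$ in $A$ and in $C(A)$ fail.
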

\section{Non-linearly ordered case.}\label{section6}
\indent  First we list some properties of abelian $\ell$-groups 
(see \cite{BKW}). The aim is to get a sufficient condition for an $\ell$-group 
being a cartesian product of finitely many linearly ordered groups. 
We let $G$ be an $\ell$-group. 
We know that, for every $x\in G$, there exists a unique 
pair $x_+$, $x_-$ of non-negative elements such that $x=x_++x_-$ and 
$x_+\wedge x_-=0$. We let $|x|:=x_++x_-$. \\
\indent  Two elements $x$, $y$ of $G$ are said to be {\it orthogonal} if $|x|\wedge |y|=0$. This 
is equivalent to: $x_+\wedge y_+=x_+\wedge y_-=x_-\wedge y_+=x_-\wedge y_-=0$. 
A subset $A$ of $G$ is said to be {\it orhogonal} if its elements are pairwise orthogonal.  
Every orthogonal subset is contained in a maximal orthogonal subset. \\
\indent  If $A\subset G$, then the {\it polar} of $A$ is the set $A^{\perp}:=\{y\in G\mid 
\forall x\in A,\; |x|\wedge |y|=0\}$; if $A=\{x\}$, then we let $x^{\perp}:=\{x\}^{\perp}$. The set 
$A^{\perp\perp}$ is called a {\it bipolar}. Every 
polar of $G$ is a convex $\ell$-subgroup of $G$. A polar $A^{\perp}$ 
is said to be {\it principal} if $A^{\perp}=x^{\perp \perp}$ for some $x\in G$
(see \cite[Chapter 3]{BKW}).\\ 
\indent  An element $x$ of $G_+$ is 
said to be {\it basic} if $x^{\perp\perp}$ is a linearly ordered group, which is equivalent to 
saying that the set $[0,x]$ is linearly ordered. If $x$ and $y$ are basic elements, then either 
$x\leq y$ or $y< x$ or $x\wedge y=0$. If $x\wedge y>0$, then $x^{\perp}=y^{\perp}$, 
hence $x^{\perp\perp}=y^{\perp\perp}$ (see \cite[pp.\ 133-135]{BKW}). \\ 
\indent  The group $G$ is said to be {\it projectable} if, for every $x\in G$, 
$G$ is the direct sum of $x^{\perp}$ and $x^{\perp\perp}$. Note that being projectable  
is a first-order property. Let $x$, $y$ in $G_+$. Then $y\in x^{\perp} \Leftrightarrow 
x\wedge y=0$, and $y\in x^{\perp\perp}\Leftrightarrow \forall z\; (x\wedge z=0\Rightarrow 
y\wedge z=0)$. Hence $y\in x^{\perp}$ and $y\in x^{\perp\perp}$ are first-order properties. 
\begin{Lemma} If $\{x_1,\dots, x_n\}$ is a maximal orthogonal set  of an $\ell$-group 
$G$ whose elements are basic elements, then: \\ 
$\bullet$ for every $x\in G$ there is some $i\in \{1,\dots,n\}$ such that 
$x_i^{\perp\perp}\subset x^{\perp\perp}$,\\ 
$\bullet$ $x>0$ is basic if, and only if, there exists $i\in \{1,\dots,n\}$ such that 
$x_i^{\perp\perp}=x^{\perp\perp}$, \\
$\bullet$ the minimal principal polars of $G$ are 
$x_1^{\perp\perp},\dots,x_n^{\perp\perp}$. 
\end{Lemma}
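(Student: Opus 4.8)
The plan is to distill from maximality the single combinatorial fact that $\{x_1,\dots,x_n\}^{\perp}=\bigcap_{i}x_i^{\perp}=\{0\}$, and then let the cited trichotomy for basic elements do the real work. First I would note that a nonzero element is never orthogonal to itself (as $|y|\wedge|y|=|y|>0$), so if some $y\neq 0$ lay in $\bigcap_i x_i^{\perp}$, then $\{x_1,\dots,x_n,y\}$ would be a strictly larger orthogonal set, contradicting maximality. Hence $\bigcap_i x_i^{\perp}=\{0\}$, which I restate in the usable form: for every $x\neq 0$ there is an index $i$ with $|x|\wedge x_i>0$.

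For the first bullet (read for $x\neq 0$; the case $x=0$ is vacuous), fix such an $i$ and set $d:=x_i\wedge|x|>0$. Since $0<d\le x_i$ and $[0,x_i]$ is a chain, $[0,d]$ is a chain too, so $d$ is basic; as $d\wedge x_i=d>0$, the cited fact gives $d^{\perp}=x_i^{\perp}$. Because $d\le|x|$, every element orthogonal to $|x|$ is orthogonal to $d$, so $x^{\perp}\subseteq d^{\perp}=x_i^{\perp}$, and taking polars yields $x_i^{\perp\perp}\subseteq x^{\perp\perp}$. The second bullet is then immediate: if $x^{\perp\perp}=x_i^{\perp\perp}$ it is linearly ordered, so $x$ is basic; conversely, for $x>0$ basic choose $i$ with $x\wedge x_i>0$, and since both $x$ and $x_i$ are basic the cited fact gives $x^{\perp}=x_i^{\perp}$, hence $x^{\perp\perp}=x_i^{\perp\perp}$.

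For the third bullet I read \emph{minimal principal polar} as minimal among the nonzero principal polars, and argue both inclusions. That each $x_i^{\perp\perp}$ is minimal: if $\{0\}\neq y^{\perp\perp}\subseteq x_i^{\perp\perp}$, then $0<|y|\in x_i^{\perp\perp}$; since polars are convex $\ell$-subgroups, $[0,|y|]\subseteq x_i^{\perp\perp}$ is a chain, so $|y|$ is basic, and $|y|\wedge x_i=\min(|y|,x_i)>0$ in the linearly ordered group $x_i^{\perp\perp}$, whence $y^{\perp\perp}=|y|^{\perp\perp}=x_i^{\perp\perp}$. That these are all of them: any minimal nonzero principal polar $y^{\perp\perp}$ contains some $x_i^{\perp\perp}$ by the first bullet, so minimality forces $y^{\perp\perp}=x_i^{\perp\perp}$.

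The genuinely delicate points — the main obstacle — are the bookkeeping that turns maximality into $\bigcap_i x_i^{\perp}=\{0\}$ and the repeated use of convexity of polars to see that a positive element below (or inside) a basic bipolar is again basic; everything else is then a mechanical application of the trichotomy $x^{\perp}=y^{\perp}$ for basic elements with positive meet.
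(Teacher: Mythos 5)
Your proof is correct, and its skeleton matches the paper's: maximality produces an index $i$ with $x_i\wedge|x|>0$, and the quoted BKW fact (basic elements with positive meet have equal polars) carries the rest. The one genuine difference is in the first bullet. The paper proves the inclusion $x^{\perp}\subseteq x_i^{\perp}$ by a bare-hands two-case computation, comparing $x_i\wedge x$ with $x_i\wedge y$ inside the chain $[0,x_i]$ for an arbitrary $y\in G_+$; you instead route through the auxiliary basic element $d:=x_i\wedge|x|$ (basic because $[0,d]\subseteq[0,x_i]$ is a chain), apply the cited polar-equality fact to the pair $d,x_i$, and then use $d\le|x|$ to get $x^{\perp}\subseteq d^{\perp}=x_i^{\perp}$. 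The two arguments are equivalent in substance --- the cited fact is itself established by exactly the comparability computation the paper performs --- so your version buys brevity and a cleaner reduction to the quoted fact, while the paper's is self-contained at that step. You also make explicit several things the paper leaves implicit: the bookkeeping that turns maximality into $\bigcap_i x_i^{\perp}=\{0\}$, the converse direction of the second bullet (if $x^{\perp\perp}=x_i^{\perp\perp}$ then $x^{\perp\perp}$ is linearly ordered, so $x$ is basic), and the entire third bullet, which the paper dismisses with ``follows trivially''; your convexity argument that $|y|$ is basic whenever $\{0\}\neq y^{\perp\perp}\subseteq x_i^{\perp\perp}$, followed by $|y|\wedge x_i>0$ in the chain $x_i^{\perp\perp}$, is exactly the right filler. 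Your two reading conventions --- restricting the first bullet to $x\neq 0$ (since $0^{\perp\perp}=\{0\}$ contains no $x_i^{\perp\perp}$) and taking ``minimal principal polar'' to mean minimal among nonzero principal polars --- are the intended ones, and flagging them is a point in your favor.
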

\begin{proof} Let $0< x \in G$. Since $\{x_1,\dots,x_n\}$ is maximal 
orthogonal, 
there is some $i\in \{1,\dots,n\}$ such that $x\wedge x_i>0$. Now, $[0,x_i]$ is linearly ordered, 
hence for every $y\in G_+$, we have that either $x_i\wedge x\leq x_i\wedge y$ 
or $x_i\wedge y\leq x_i\wedge x$. If $x_i\wedge y\leq x_i\wedge x$, then 
$x_i\wedge y\leq x_i\wedge (x\wedge y)$. Hence $y\in x^{\perp}\Rightarrow y\in 
x_i^{\perp}$. If $x_i\wedge x< x_i\wedge y$, then $x_i\wedge x\leq x_i\wedge (x\wedge y)$. 
Hence $y\wedge x>0$. It follows that $x^{\perp}\subset x_i^{\perp}$. Therefore 
$x_i^{\perp\perp}\subset x^{\perp\perp}$. \\ 
\indent  Let $x>0$ and $i\in \{1,\dots,n\}$ such that $x^{\perp}=x_i^{\perp}$. If $x$ is basic, 
then we have that $x_i\leq x$ or $x\leq x_i$. Assume that $x_i\leq x$. For every $y>0$ we have that 
$y\wedge x_i=y\wedge x_i\wedge x=y\wedge x \wedge x_i=\min(y\wedge x,x_i)$,  
since $[0,x]$ is linearly ordered. Therefore: $y\wedge x_i=0\Leftrightarrow y\wedge x=0$, 
hence $x^{\perp}=x_i^{\perp}$. The  case $x\leq x_i$ is similar. \\
\indent  The last assertion follows trivially. 
\end{proof} 
\indent  By \cite[Theor\`eme 6, Chapitre II]{Ja 51}, we know that an $\ell$-group $G$ 
is a direct sum of linearly ordered groups if, and only if, the following holds:\\
\indent  $\bullet$ for every $x^{\perp\perp}$ which is 
minimal, $G$ is the direct sum of $x^{\perp}$ and $x^{\perp\perp}$,\\
\indent  $\bullet$ every $x^{\perp\perp}$ contains 
some $y^{\perp\perp}$ which is minimal,\\
\indent  $\bullet$ there is at most a finite number of minimal $y^{\perp\perp}$.\\ 
\indent  It follows that $G$ is the direct sum of $n$ linearly ordered groups if, and only if, 
it contains a maximal orthogonal set $\{x_1,\dots,x_n\}$ whose elements are basic elements and, 
for every $i\in \{1,\dots,n\}$, 
$G$ is the direct sum of $x_i^{\perp}$ and $x_i^{\perp\perp}$.  
This is equivalent to saying that 
$G$ is projectable and contains a maximal orthogonal set of $n$ element which are basic. \\
\indent  Now, we have also the following result. 
\begin{Proposition}\label{prop62} 
Let $G$ be an $\ell$-group together with a distinguished strong 
unit $u$. Then $G$ is the product of $n$ linearly ordered groups if, and only if, 
$G_+$ contains a maximal orthogonal set $\{u_1,\dots,u_n\}$, whose elements are basic, 
such that $u=u_1+\cdots+u_n$. 
\end{Proposition}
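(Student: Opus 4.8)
The plan is to prove both implications directly, using the strong unit to control the decomposition.

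For the forward direction I would suppose $G=G_1\times\cdots\times G_n$ with each $G_i$ linearly ordered. Since $u$ is a strong unit, writing $u=(v_1,\dots,v_n)$ forces each $v_i$ to be a strong unit of $G_i$; in particular $v_i>0$. I then set $u_i$ to be the element of $G$ whose $i$-th component is $v_i$ and whose other components vanish, so that $u=u_1+\cdots+u_n$. These elements are pairwise orthogonal because they are supported on distinct factors, and each is basic since $[0,u_i]$ lies inside the linearly ordered group $G_i$. Maximality is the only point needing an argument: if $y$ is orthogonal to every $u_i$, then its $i$-th component $y_i$ satisfies $|y_i|\wedge v_i=0$ in the linearly ordered $G_i$, whence $y_i=0$ because $v_i>0$; thus $y=0$ and $\{u_1,\dots,u_n\}$ cannot be enlarged.

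For the converse, the key idea is that the relation $u=u_1+\cdots+u_n$ together with orthogonality and the strong-unit property yields a direct sum decomposition $G=\bigoplus_{i=1}^n u_i^{\perp\perp}$. Since the $u_i$ are pairwise orthogonal, so are their multiples $mu_i$ (using $a\wedge b=0\Rightarrow ma\wedge b=0$, as in 1.2.24 of \cite{BKW}), and hence $mu=mu_1\vee\cdots\vee mu_n$. Given $x\in G_+$, I would choose $m$ with $x\le mu$ and use distributivity of the lattice to write
$$x=x\wedge mu=\bigvee_{i=1}^n(x\wedge mu_i).$$
The elements $x\wedge mu_i$ are pairwise orthogonal, so this supremum is in fact the sum $x=\sum_{i=1}^n(x\wedge mu_i)$, and each summand lies in the convex $\ell$-subgroup $u_i^{\perp\perp}$ because $0\le x\wedge mu_i\le mu_i\in u_i^{\perp\perp}$. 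As $G$ is generated by $G_+$, every element of $G$ is a sum of elements of the $u_i^{\perp\perp}$.

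It remains to see that this sum is direct and that the factors are linearly ordered. The latter is immediate: $u_i$ is basic, so $u_i^{\perp\perp}$ is linearly ordered. For directness, I would note that $u_j^{\perp\perp}\subseteq u_i^{\perp}=(u_i^{\perp\perp})^{\perp}$ for $j\neq i$, so the subgroups $u_i^{\perp\perp}$ are pairwise orthogonal; consequently $u_i^{\perp\perp}\cap\sum_{j\neq i}u_j^{\perp\perp}\subseteq u_i^{\perp\perp}\cap u_i^{\perp}=\{0\}$. Hence $G=\bigoplus_{i=1}^n u_i^{\perp\perp}$ is the internal direct sum of $n$ linearly ordered groups, i.e.\ their product. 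Alternatively, once the decomposition above exhibits projectability of $G$ at each $u_i$, the conclusion follows from the characterization stated just before this proposition. The main obstacle is the converse direction, and within it the step of splitting an arbitrary positive element across the factors: the argument hinges on combining the strong-unit bound $x\le mu$, the preservation of orthogonality under multiples, distributivity of the underlying lattice, and convexity of the polars $u_i^{\perp\perp}$.
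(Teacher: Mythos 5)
Your proposal is correct and takes essentially the same route as the paper: the converse rests on the identical decomposition $x = x\wedge mu = (x\wedge mu_1)+\cdots+(x\wedge mu_n)$ into the bipolars $u_i^{\perp\perp}$, obtained from the strong-unit bound, preservation of orthogonality under multiples, and the orthogonality identities for $\ell$-groups. The only cosmetic differences are that you establish directness via the containments $u_j^{\perp\perp}\subseteq u_i^{\perp}$ for $j\neq i$, where the paper instead verifies uniqueness of the decomposition directly by showing $x_i = x\wedge pu_i$, and that you spell out the forward direction, which the paper dismisses as straightforward.
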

\begin{proof} $\Rightarrow$ is straightforward. Assume that 
$G_+$ contains a maximal orthogonal set $\{u_1,\dots,u_n\}$, whose elements are basic, 
such that $u=u_1+\cdots+u_n$. 
We know that if $x$, $y$, $z$ in $G_+$ satisfy $x\wedge y=0$, then $x+y=x\vee y$ and 
$(x+z)\wedge y=z\wedge y$ (see, for example, \cite[Lemma 2.3.4]{Gl 99}). 
Let $x\in G_+$, and $p\in \NN^*$ such that $x\leq p u$. We have that $p u=
p u_1+\cdots +p u_n$, where the $p u_i$'s are pairwise orthogonal. Hence 
$x=x\wedge p u=x\wedge p u_1+\cdots+x\wedge p u_n 
\in u_1^{\perp\perp}+\cdots+u_n^{\perp\perp}$. Note that, since $\{u_1,\dots,u_n\}$ is 
a orthogonal, we have that $x=(x\wedge p u_1)\vee\cdots\vee(x\wedge p u_n)$. Assume that 
$x=x_1+\cdots+x_n=x_1\vee \cdots \vee x_n$ with $x_i\in u_i^{\perp\perp}$ 
($1\leq i\leq n$). Then $x_i=x\wedge pu_i$, which proves the uniqueness of the 
decomposition. It follows that 
$G$ is the direct sum of $u_1^{\perp\perp},\dots,u_n^{\perp\perp}$. 
\end{proof}
\indent  Now, we turn to MV-algebras. From \cite[Lemma 2.3.4]{Gl 99}, which we recalled 
in the proof of Proposition \ref{prop62}, one deduces by induction that for every orthogonal family 
$\{x_1,\dots,x_n\}$ in the positive cone of an $\ell$-group we have 
$x_1+\cdots+x_n=x_1\vee \cdots\vee x_n$. Now, in an MV-algebra if $\{x_1,\dots,x_n\}$ is 
an orthogonal family, then $x_1\oplus\cdots\oplus x_n=x_1\vee \cdots\vee x_n$. The following 
proposition is similar to Lemma 6.4.5 in \cite{CDM}. 
\begin{Proposition}.\label{prop64} Let $A$ be an MV-algebra. 
Then, the Chang $\ell$-group of $A$ is isomorphic to a product of $n$ 
linearly ordered groups if, and only if, 
there exist non zero elements $u_1,\dots,u_n$ of $A$ such that: \\
$\bullet$ $1=u_1\oplus \cdots \oplus u_n$,\\ 
$\bullet$ for all $i$, $j$ in $\{1,\dots, n\}$: $i\neq j\Rightarrow u_i\wedge u_j=0$,\\ 
$\bullet$ for all  $x$, $y$ in $A$, if $x\leq u_i $ and $y\leq u_i$, then $x\leq y$ or $y\leq x$. \\
If this holds, then the Chang $\ell$-group of $A$ is interpretable in 
$(\ZZ\times [0,u_1[)\times \cdots \times (\ZZ\times [0,u_n[)$, where 
$(p_1,x_1,\dots,p_n,x_n)\leq (q_1,y_1,\dots,q_n,y_n)$ if, and only if, for every $i\in 
\{1,\dots,n\}$, $p_i<q_i$ or ($p_i=q_i$ and $x_i\leq y_i$). 
The addition is defined componentwise, 
$(p_i,x_i)+(q_i,y_i)= (p_i+q_i,x_i\oplus y_i)$ if $x_i\oplus y_i < u_i$, and 
$(p_i,x_i)+(q_i,y_i)= (p_i+q_i+1,x_i\odot y_i)$ if $x_i\oplus y_i = u_i$. 
\end{Proposition}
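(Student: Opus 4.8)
The plan is to reduce the statement to Proposition \ref{prop62} applied to the Chang $\ell$-group $(G_A,u_A)$, under the standing identification $A\cong[0,u_A]$. The whole point is to recognize that the three MV-algebraic conditions on $u_1,\dots,u_n$ say exactly that $\{u_1,\dots,u_n\}$ is a maximal orthogonal set of basic elements of $(G_A)_+$ with $u_A=u_1+\cdots+u_n$. First I would match the notions one by one. Since each $u_i$ lies in $[0,u_A]$ it is non-negative, so $|u_i|=u_i$ and the MV-condition $u_i\wedge u_j=0$ for $i\neq j$ is precisely orthogonality in $G_A$. As recalled just before the statement, an orthogonal family in the positive cone satisfies $u_1+\cdots+u_n=u_1\vee\cdots\vee u_n$, while in $A$ one has $u_1\oplus\cdots\oplus u_n=u_1\vee\cdots\vee u_n$; hence the condition $1=u_1\oplus\cdots\oplus u_n$ is equivalent to $u_A=u_1+\cdots+u_n$ in $G_A$. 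Finally, because $u_i\leq u_A$, every element of $G_A$ below $u_i$ already lies in $[0,u_A]=A$, so the interval $[0,u_i]$ computed in $A$ coincides with the one computed in $G_A$; thus the third condition (each $[0,u_i]$ linearly ordered) says exactly that each $u_i$ is a basic element.

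It then remains to see that the orthogonal set is maximal, which I would deduce from $u_A$ being a strong unit. If $0<v\in G_A$ were orthogonal to every $u_i$, then by distributivity $v\wedge u_A=v\wedge(u_1\vee\cdots\vee u_n)=\bigvee_i(v\wedge u_i)=0$; since $v\wedge u_A=0$ forces $v\wedge(mu_A)=0$ for every $m\in\NN^*$, and $v\leq mu_A$ for some $m$ by the strong-unit property, we obtain $v=v\wedge mu_A=0$, a contradiction. Hence $\{u_1,\dots,u_n\}$ is maximal orthogonal, and Proposition \ref{prop62} gives that $G_A$ is a product of $n$ linearly ordered groups. Conversely, if $G_A$ is such a product, Proposition \ref{prop62} furnishes a maximal orthogonal set of basic elements summing to $u_A$; each such element is $\leq u_A$, hence belongs to $A$ and is nonzero, and reversing the translations of the previous paragraph recovers the three MV-conditions. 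This establishes the equivalence.

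For the explicit interpretation I would use the underlying direct-sum decomposition $G_A=u_1^{\perp\perp}\oplus\cdots\oplus u_n^{\perp\perp}$ provided by Proposition \ref{prop62}. Each summand $u_i^{\perp\perp}$ is linearly ordered (as $u_i$ is basic), and $u_i$ is a strong unit of it: for $x\in(u_i^{\perp\perp})_+$ with $x\leq mu_A=mu_1+\cdots+mu_n$, orthogonality of $x$ to the $u_j$ ($j\neq i$) gives $x=x\wedge mu_A=\bigvee_j(x\wedge mu_j)=x\wedge mu_i\leq mu_i$. Therefore $\Gamma(u_i^{\perp\perp},u_i)=[0,u_i]$ is an MV-chain whose Chang $\ell$-group is $u_i^{\perp\perp}$, so by the Chang-group description for MV-chains (\cite[Lemmas 5 and 6]{Ch}) the unital linearly ordered group $u_i^{\perp\perp}$ is isomorphic to $\ZZ\times[0,u_i[$, lexicographically ordered, with $(p_i,x_i)+(q_i,y_i)=(p_i+q_i,x_i\oplus y_i)$ when $x_i\oplus y_i<u_i$ and $(p_i+q_i+1,x_i\odot y_i)$ otherwise. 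Taking the product over $i$ transports the componentwise order and addition and yields the displayed interpretation of $G_A$ in $(\ZZ\times[0,u_1[)\times\cdots\times(\ZZ\times[0,u_n[)$.

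I expect the main obstacle to be the maximality step together with the careful transfer of the orthogonal-sum identities between $A$ and $G_A$: one must be sure that orthogonality, the sum-equals-supremum rule, and the truncated $\oplus$ all behave compatibly under the identification $A=[0,u_A]$, and that each $u_i$ really is a strong unit of its bipolar so that the MV-chain Chang-group lemma applies componentwise. Once these points are secured, the two implications are a direct reading of Proposition \ref{prop62}, and the interpretation is obtained by gluing the per-component isomorphisms.
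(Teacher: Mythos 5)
Your proof is correct, but for the interpretability half it takes a genuinely different route from the paper's. For the equivalence, both you and the paper reduce to Proposition \ref{prop62}; the paper merely asserts this, while you supply the needed dictionary (orthogonality of $u_i,u_j$ in $G_A$, the equivalence $1=u_1\oplus\cdots\oplus u_n\Leftrightarrow u_A=u_1+\cdots+u_n$ via the sum-equals-supremum rule for orthogonal families, basicity of $u_i$ from the linearity of $[0,u_i]$, which is the same interval in $A$ and in $G_A$) together with the maximality argument from the strong-unit property, which the paper leaves implicit. For the interpretation, the paper argues by hand: it takes the good sequence of a positive $x\in G_A$, projects its terms onto the bipolars $u_i^{\perp\perp}$, derives the unique normal form $x=k_1u_1+x_1+\cdots+k_nu_n+x_n$ with $x_i\in[0,u_i[$, and then verifies the order and the addition rules by direct computation, including the identities $x_i\oplus y_i=(x_i+y_i)\wedge u_i$ and $x_i\odot y_i=x_i+y_i-u_i$. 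You instead argue structurally: you show $u_i$ is a strong unit of the linearly ordered bipolar $u_i^{\perp\perp}$, conclude by uniqueness of the Chang $\ell$-group (the Mundici equivalence quoted in Section \ref{section2}) that $(u_i^{\perp\perp},u_i)$ is the Chang group of the MV-chain $[0,u_i]$, and then apply \cite[Lemmas 5 and 6]{Ch} (quoted at the start of Section \ref{section5}) componentwise to get $u_i^{\perp\perp}\cong\ZZ\times[0,u_i[$. Your route buys modularity and avoids the good-sequence computation; the paper's is self-contained and produces the explicit normal form. The one detail you flag but do not carry out --- that $\oplus$ and $\odot$ of $A$ restricted to $[0,u_i]$ coincide with the operations of the chain $\Gamma(u_i^{\perp\perp},u_i)$, which is needed to read the displayed addition rules with $A$'s operations --- is a one-line componentwise check in the direct sum $G_A=u_1^{\perp\perp}\oplus\cdots\oplus u_n^{\perp\perp}$: for $x,y\in[0,u_i]$ one has $(x+y)\wedge u_A=(x+y)\wedge u_i$ because $x+y$ is orthogonal to each $u_j$ with $j\neq i$, and similarly for $\odot$; so no genuine gap remains.
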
 
\begin{proof} The equivalence follows from Proposition \ref{prop62}. 
Now let $x\in G_{A+}$. We know that there exists a good sequence $(x_1,\dots,x_p)$ of 
elements of $[0,u]$ such that $x=x_1+\cdots+x_p$, where, for $1\leq k\leq p-1$, 
$(u-x_k)\wedge x_{k+1}=0$ (see Section \ref{section2}). For $j\in \{1,\dots,n\}$ let $x_j=x_{1,j}+
\cdots+x_{n,j}$, with 
$x_{i,j}\in u_i^{\perp\perp}$ ($1\leq i\leq n$). We have that $u-x_j=(u_1-x_{1,j})+\cdots+
(u_n-x_{n,j})$, hence $u_i-x_{i,j}>0\Rightarrow x_{i,{j+1}}=0$ i.e.\ $x_{i,j}\neq u_i\Rightarrow 
x_{i,{j+1}}=0$. Therefore we can write $x$ as $x=k_1u_1+x_1+\cdots+k_nu_n+x_n$, with 
$0\leq k_i\leq p$ and $x_i\in [0,u_i[$ ($1\leq i\leq n$). So, every element of $G_A$ 
can be writen in a unique way as $x=k_1u_1+x_1+\cdots+k_nu_n+x_n$, with 
$ k_i\in \ZZ$ and $x_i\in [0,u_i[$ ($1\leq i\leq n$). 
Let $x=k_1u_1+x_1+\cdots+k_nu_n+x_n$, and $y=l_1u_1+y_1+\cdots+l_nu_n+y_n$ in 
$G_A$. \\ 
\indent  Trivially, 
$x\leq y$ if, and only if, for every $i\in 
\{1,\dots,n\}$, $k_i<l_i$ or ($k_i=l_i$ and $x_i\leq y_i$). \\
\indent  Set $x+y=z=m_1u_1+z_1+\cdots+m_nu_n+z_n$. Since $k_iu_i+x_i+l_iu_i+y_i
\in u_i^{\perp\perp}$, we have that $m_iu_i+z_i=(k_i+l_i)u_i+x_i+y_i$, i.e.\ 
$x_i+y_i-z_i=(m_i-k_i-l_i)u_i$. If $x_i+y_i<u_i$, then $-u_i<x_i+y_i-z_i<u_i$, hence 
$x_i+y_i-z_i=0$ and $z_i=x_i+y_i=x_i\oplus y_i$ and $m_i=k_i+l_i$. Otherwise, in the 
same way we prove that $z_i=x_i+y_i-u_i$ and $m_i=k_i+l_i+1$. Now, $x_i\oplus y_i=
(x_i+y_i)\wedge u=(x_i+y_i)\wedge u_i=u_i$ and $x_i\odot y_i=u-[(2u_i-x_i-y_i)\wedge u]=
(x_i+y_i-u)\vee0=x_i+y_i-u_i$. 
\end{proof}
\begin{Remarks} Since in $G_A$ we have that $A=[0,u_A]$, saying that $G_A$ is 
isomorphic to a product of $n$ linearly ordered groups is equivalent to saying that $A$ is a 
isomorphic to a product of $n$ MV-chains. \\
It follows from Proposition \ref{prop64} that being isomorphic to a product of $n$ MV-chains 
is a first-order property. 
\end{Remarks}
\begin{Proposition} (\cite[Proposition 3.6.5]{CDM}). 
Let $A$ be a finite MV-algebra. Then $A$ is isomorphic to 
a product of finite MV-chains and its Chang $\ell$-group 
$G_A$ is isomorphic to some $\ZZ\times \cdots\times \ZZ$. 
\end{Proposition}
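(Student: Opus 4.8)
The plan is to invoke Proposition \ref{prop64}: it suffices to exhibit nonzero elements $u_1,\dots,u_n\in A$ that are pairwise orthogonal, are basic (each $[0,u_i]$ linearly ordered), and satisfy $1=u_1\oplus\cdots\oplus u_n$. Once these are produced, Proposition \ref{prop64} and the remark following it give that $A$ is isomorphic to the product of the $n$ finite MV-chains $[0,u_i]$ and that $G_A$ is isomorphic to $(\ZZ\times[0,u_1[)\times\cdots\times(\ZZ\times[0,u_n[)$. Since $A$ is finite each $[0,u_i[$ is a finite chain, and the lexicographically ordered group $\ZZ\times[0,u_i[$ with the addition rule of Proposition \ref{prop64} is isomorphic to $\ZZ$ (it is the Chang $\ell$-group of the finite MV-chain $[0,u_i]$); hence $G_A\cong\ZZ\times\cdots\times\ZZ$.

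To build the $u_i$, I would first take the atoms $a_1,\dots,a_n$ of the finite lattice $A$. Each $a_i$ is an atom of $G_{A+}$ as well, because $[0,a_i]\subseteq[0,u]=A$ by convexity, so $a_i$ is basic; distinct atoms satisfy $a_i\wedge a_j=0$; and $\{a_1,\dots,a_n\}$ is a maximal orthogonal set, since any $x>0$ has $x\wedge u>0$ ($u$ being a strong unit), while $x\wedge u$ lies above some atom $a_i$, whence $x\wedge a_i=x\wedge u\wedge a_i=a_i>0$. Because $a_i$ is basic, $a_i^{\perp\perp}$ is linearly ordered, so $a_i^{\perp\perp}\cap[0,u]$ is a finite chain; I set $u_i$ to be its greatest element (equivalently $u_i=u\wedge p a_i$ for $p$ large). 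Then each $u_i$ is basic, lies in $a_i^{\perp\perp}$, and the $u_i$ are pairwise orthogonal, giving at once the second and third conditions of Proposition \ref{prop64}.

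The heart of the argument is to check $u=u_1+\cdots+u_n$; granting this, orthogonality yields $u_1\oplus\cdots\oplus u_n=u_1\vee\cdots\vee u_n=u=1$, the remaining condition. Set $w=u_1\vee\cdots\vee u_n=u_1+\cdots+u_n\leq u$ and suppose $w<u$. Then $u-w>0$, so by maximality of $\{a_1,\dots,a_n\}$ there is $i$ with $(u-w)\wedge a_i>0$; as $a_i$ is an atom this forces $a_i\leq u-w$, i.e.\ $u_i+a_i\leq w+a_i\leq u$. But $u_i+a_i\in a_i^{\perp\perp}\cap[0,u]$ and $u_i+a_i>u_i$, contradicting the maximality of $u_i$ in that finite chain. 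Hence $w=u$.

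The step I expect to be the main obstacle is precisely this last equality $u=u_1+\cdots+u_n$: growing each atom to the top $u_i$ of its bipolar cap and showing the orthogonal join of these tops exhausts $u$. Passing through Proposition \ref{prop64} lets me avoid verifying projectability of $G_A$ by hand, since that is subsumed in the criterion. Finiteness of $A$ is used in three essential places: to guarantee finitely many atoms, to guarantee that each chain $a_i^{\perp\perp}\cap[0,u]$ has a greatest element, and to make each factor $[0,u_i]$ a finite chain so that its Chang $\ell$-group is $\ZZ$.
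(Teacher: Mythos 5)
Your proof is correct, but note that the paper offers no proof at all for this Proposition: it is simply cited from \cite[Proposition 3.6.5]{CDM}, where the decomposition of a finite MV-algebra into finite chains is obtained by the ideal-theoretic machinery of that book. What you do instead is give a self-contained argument inside this paper's own framework, routing everything through Proposition \ref{prop64} (and hence Proposition \ref{prop62}): the atoms $a_i$ of the finite lattice $A$ are atoms of $G_{A+}$ (any $y$ with $0\leq y\leq a_i\leq u_A$ lies in $[0,u_A]=A$), distinct atoms are orthogonal, and maximality of $\{a_1,\dots,a_n\}$ follows from $x\wedge u_A>0$ for $x>0$ (if $x\wedge u_A=0$ then $x\wedge nu_A=0$ for all $n$, contradicting that $u_A$ is a strong unit). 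Your key step --- taking $u_i$ to be the top of the finite chain $a_i^{\perp\perp}\cap[0,u_A]$ and deriving $u_A=u_1+\cdots+u_n$ by the atom-pushing contradiction ($a_i\leq u_A-w$ would put $u_i+a_i$ in that chain above $u_i$) --- is sound, and the translation $1=u_1\oplus\cdots\oplus u_n\Leftrightarrow u_A=u_1+\cdots+u_n$ via orthogonal joins is exactly the identity the paper records before Proposition \ref{prop64}. Your observation that Proposition \ref{prop64} subsumes projectability is also right: its hypotheses imply that $\{u_1,\dots,u_n\}$ is a \emph{maximal} orthogonal set (any $x$ orthogonal to all $u_i$ is orthogonal to their sum $u_A$, hence zero), which is what Proposition \ref{prop62} needs. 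The identification $\ZZ\times[0,u_i[\,\cong\ZZ$ via $(m,x)\mapsto mk_i+x$ for a chain with $k_i$ elements below $u_i$ is immediate from the addition rules in Proposition \ref{prop64}. What your route buys is independence from \cite{CDM}: the Proposition becomes a corollary of the paper's Section \ref{section6} decomposition criterion rather than an imported fact; what the citation buys is brevity and coverage of the degenerate cases ($A=\{0\}$ or $A=\{0,1\}$), which your argument handles only after the trivial remark that $n\geq 1$ atoms exist whenever $A$ is nontrivial --- a caveat worth one sentence if you write this up.
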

\indent  
Since an MV-algebra embeds in the positive cone of its Chang $\ell$-group, 
we have for every $a$: $|a|=a$. Hence we can define orthogonal elements  
and polars in the following way. 
\begin{defi} Let $A$ be an MV-algebra. Two elements $a$, $b$ are
{\it orthogonal} if $a\wedge b=0$. The polar of a subset $B$ of $A$ is $B^{\perp}=
\{a\in A\mid \forall b\in B\;a\wedge b=0\}$. The MV-algebra $A$ is said 
to be {\it projectable} if, for every $a$, $b$ in $A$, $b$ can be written in a unique way as 
$b=b_1\oplus b_2$, with $b_1\in a^{\perp}$ and $b_2\in a^{\perp\perp}$. 
\end{defi}
\indent  
One can also prove that an MV-algebra is projectable if, and only if, its Chang $\ell$-group is 
projectable. \\
\indent  We see that every finite MV-algebra is pojectable, and that every 
minimal principal polar is discrete. Consequently, every pseudofinite MV-algebra 
is projectable, and its minimal principal polars are discrete and regular. \\[2mm]
\indent  Turning to first-order theory, we consider the language 
$L_{MVn}=(0,\oplus,\neg, \underline{1_1}, \dots, 
\underline{1_n})$, with $n$ new constant symbols. Let $(A_1,1_1),$ $\dots,$
$(A_n,1_n)$, $(A_1',1_1'),$ $\dots,$ $(A_n',1_n')$ be MV-chains. For $i\in \{1,\dots,n\}$, we 
assume that $(A,1_i)$ is a $L_{MVn}$-structure, by setting, for $j\in \{1,\dots,n\}$, 
$x=\underline{1_j}$ if either $i=j$ and $x=1_i$, or $i\neq j$ and $x=0$. We define in the 
same way the $L_{MVn}$-structures $(A_1',1_1'),\dots,(A_n',1_n')$. Let $A$ 
be the $L_{MVn}$-structure $A_1\times \cdots \times 
A_n$ and $A'$ be the $L_{MVn}$-structure $A_1'\times \cdots \times A_n'$.  
\begin{Remark}\label{rkap69}
With the same notations, we deduce from Theorem \ref{th26} 
that in the language $L_{MVn}$ 
$(A,1_1,\dots,1_n)\equiv (A',1_1',\dots,1_n')\Leftrightarrow (A_1,1_1)\equiv (A_1',1_1'), 
\dots, (A_n,1_n)\equiv (A_n',1_n')$. 
\end{Remark} 
\indent  Now, we consider families of MV-chains, 
$(A_{1,\alpha_1},1_{1,\alpha_1})_{\alpha_1\in I_1},\dots, 
(A_{n,\alpha_n},1_{n,\alpha_n})_{\alpha_n\in I_n}$ 
(we can do the same thing with families of linearly ordered groups 
$(T_{1,\alpha_1})_{\alpha_1\in I_1},\dots, (T_{n,\alpha_n})_{\alpha_n\in I_n}$). 
For every $(\alpha_1,\dots,\alpha_n)$ in $I_1\times \cdots\times I_n$ we set 
$(A_{(\alpha_1,\dots,\alpha_n)},1_{1,\alpha_1},\dots,1_{n,\alpha_n})=
(A_{1,\alpha_1}\times \cdots \times A_{,n\alpha_n},1_{1,\alpha_1},\dots,1_{n,\alpha_n})$. 
We let 
$U$ be an ultrafilter on $I_1\times \cdots\times I_n$ and $(A,1_1,\dots,1_n)$ be the 
ultraproduct of the family $(A_{(\alpha_1,\dots,\alpha_n)},1_{1,\alpha_1},
\dots,1_{n,\alpha_n})$. We know that for $i\in \{1,\dots,n\}$, 
the canonical projection $p_i(U)$ on $I_i$ is an ultrafilter on $I_i$. Denote by $A_i$ the 
ultraproduct of the family $(A_{i,\alpha_i})$. Then one can prove that 
$(A,1_1,\dots,1_n)\simeq (A_1\times \cdots \times A_n,1_1,\dots,1_n)$, where $1_i$ is the 
greatest element of $A_i$. Note that the maximal element of $A$ is 
$1=1_1+\cdots +1_n$. \\[2mm]
\indent  In order to characterize some pseudofinite MV-algebras, 
we fix $n\in \NN^*$, and we restrict to the finite MV-algebras $A$ wich are isomorphic to 
a product of $n$ MV-chains $[0,1_1],\dots ,[0,1_n]$. In this case, saying that 
$A$ is hyperarchimedean is equivalent to saying that each of $[0,1_1],\dots ,[0,1_n]$ is simple. 
\begin{defi} We will say that an MV-algebra is $n$-{\it pseudofinite} 
if it is elementarily equivalent to some ultraproduct of a family of finite MV-algebras which 
are isomorphic to products of $n$ MV-chains. 
\end{defi}
\indent  We deduce the following. 
\begin{Theorem}\label{th611} 
Let $(A,1)$ and $(A',1')$ be MV-algebras. \\ 
1) $(A,1)$ is $n$-pseudofinite if, and only if, $A$ is projectable, it is isomorphic to a product 
of $n$ MV-chains $[0,1_1]\times\cdots\times [0,1_n]$ and, for every $i\in \{1,\dots,n\}$, 
the MV-chain $[0,1_i]$ is either finite or infinite discrete regular. \\ 
2) If $(A,1)$ and $(A',1')$ are $n$-pseudofinite, then $(A,1_1,\dots, 1_n)\equiv 
(A',1_1',\dots, 1_n')$ if, and only if, 
for every $i\in \{1,\dots,n\}$ either the MV-chains $[0,1_i]$, $[0,1_i']$ 
are finite and isomorphic, or they are infinite regular and satisfy the same formulas 
$D_{p^m,k}$. \end{Theorem}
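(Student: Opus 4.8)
The plan is to reduce every assertion to the single-chain case, where Theorem \ref{th511} applies, and then reassemble using Remark \ref{rkap69} together with the decomposition of an ultraproduct of products into a product of ultraproducts recalled just before the definition of $n$-pseudofinite.

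I would first prove the ``if'' part of 1). Suppose $A$ is projectable, $A\simeq [0,1_1]\times\cdots\times[0,1_n]$, and each $[0,1_i]$ is finite or infinite discrete regular. For each $i$ pick a family of finite MV-chains $(A_{i,\alpha_i})_{\alpha_i\in I_i}$ and an ultrafilter $U_i$ on $I_i$ with $B_i:=\prod_{U_i}A_{i,\alpha_i}\equiv [0,1_i]$: if $[0,1_i]$ is finite take the constant family, and if it is infinite discrete (hence atomic) regular use Theorem \ref{th511}(2), which says it is pseudofinite. Choosing an ultrafilter $U$ on $I_1\times\cdots\times I_n$ whose $i$-th projection is $U_i$ (such $U$ exists, e.g.\ any ultrafilter refining the filter generated by the sets $p_i^{-1}(S)$, $S\in U_i$), the decomposition recalled before the definition identifies $\prod_U\big(A_{1,\alpha_1}\times\cdots\times A_{n,\alpha_n}\big)$ with $B_1\times\cdots\times B_n$; and by Remark \ref{rkap69} the latter is elementarily equivalent to $[0,1_1]\times\cdots\times[0,1_n]\simeq A$. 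Since each $A_{1,\alpha_1}\times\cdots\times A_{n,\alpha_n}$ is a finite product of $n$ chains, $A$ is $n$-pseudofinite.

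For the ``only if'' part of 1), let $A\equiv B$ with $B=\prod_U\big(A_{1,\alpha_1}\times\cdots\times A_{n,\alpha_n}\big)$ an ultraproduct of finite products of $n$ chains. The same decomposition gives $B\simeq B_1\times\cdots\times B_n$ with each $B_i$ an ultraproduct of finite MV-chains. Being projectable and being isomorphic to a product of $n$ MV-chains are first-order (the Remark following Proposition \ref{prop64}), so these properties pass from $B$ to $A$; in particular $A\simeq[0,1_1]\times\cdots\times[0,1_n]$ is projectable, where the $1_i$ are the tops of a witnessing maximal orthogonal family of basic elements with $1=1_1\oplus\cdots\oplus1_n$. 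By Remark \ref{rkap69}, $[0,1_i]\equiv B_i$. Each $B_i$ is pseudofinite; if $B_i$ is finite then $[0,1_i]$ is finite, and if $B_i$ is infinite then Theorem \ref{th511}(2) makes it atomic and regular, so $[0,1_i]$ is infinite discrete regular, discreteness and regularity being preserved under $\equiv$. Part 2 then follows by one more application of Remark \ref{rkap69}: for $n$-pseudofinite $A$, $A'$ with decompositions as in 1), we have $(A,1_1,\dots,1_n)\equiv(A',1_1',\dots,1_n')$ if and only if $[0,1_i]\equiv[0,1_i']$ for every $i$. For each $i$ I split into cases: if both chains are finite, elementary equivalence is isomorphism; if both are infinite (hence regular, by 1)), Theorem \ref{th511}(1) says they are elementarily equivalent exactly when they satisfy the same formulas $D_{p^m,k}$; and a finite chain is never elementarily equivalent to an infinite one, so the mixed case cannot occur. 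This is precisely the stated criterion.

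The step I expect to be most delicate is the structural transfer in the ``only if'' direction: passing from ``$A\equiv$ a product of $n$ chains'' to ``$A$ is isomorphic to a product of $n$ chains'' with a choice of the distinguished units $1_i$ for which Remark \ref{rkap69} is applicable. This rests on the first-order definability furnished by Proposition \ref{prop64}, namely that the decomposition is witnessed by a maximal orthogonal set of basic elements summing to $1$, a single $L_{MV}$-sentence that transfers from $B$ to $A$ and simultaneously produces the constants $1_i$.
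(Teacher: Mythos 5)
Your proposal follows the paper's own argument essentially step for step: both directions reduce to the single-chain case via Theorem \ref{th511}, reassemble the product using Remark \ref{rkap69}, and rely on the decomposition of an ultraproduct of $n$-fold products of chains into a product of $n$ ultraproducts taken along the projected ultrafilters $p_i(U)$, exactly as in the paper's proof. If anything you are more careful than the printed proof, since you make explicit two points it leaves implicit --- the existence of an ultrafilter on $I_1\times\cdots\times I_n$ with prescribed projections $U_i$ (via the finite intersection property of the sets $p_i^{-1}(S)$, $S\in U_i$), and the first-order transfer of projectability and of the $n$-chain decomposition from the ultraproduct to $A$ using the remark following Proposition \ref{prop64}.
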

\begin{proof} 1) $\Rightarrow$. Let  
$(A_{1,\alpha_1},1_{1,\alpha_1})_{\alpha_1\in I_1},\dots, 
(A_{n,\alpha_n},1_{n,\alpha_n})_{\alpha_n\in I_n}$ be families of MV-chains. 
For every $(\alpha_1,\dots,$ $\alpha_n)$ in $I_1\times \cdots\times I_n$ we set 
$(A_{(\alpha_1,\dots,\alpha_n)},1_{1,\alpha_1},\dots,1_{n,\alpha_n})=
(A_{1,\alpha_1}\times \cdots \times A_{,n\alpha_n},1_{1,\alpha_1},\dots,1_{n,\alpha_n})$. 
We let 
$U$ be an ultrafilter on $I_1\times \cdots\times I_n$ and for every $i$ let
$A_i$ be the 
ultraproduct of the family $(A_{i,\alpha_i})$ (associated with $p_i(U)$). 
If $A$ is the ultraproduct of the 
family $(A_{(\alpha_1,\dots,\alpha_n)},1_{1,\alpha_1},\dots,1_{n,\alpha_n})$, then 
$(A,1_1,\dots,1_n)\simeq (A_1\times \cdots \times A_n,1_1,\dots,1_n)$. Now, 
by Theorem \ref{th511} every 
$A_i$ is an MV-chain which is  either finite or infinite discrete regular. \\ 
\indent  $\Leftarrow$ If this holds, then in the language $L_{MVn}$ $A$ is isomorphic to 
$[0,1_1]\times \cdots \times [0,1_n]$. By Theorem \ref{th511}, every $[0,1_i]$ is 
isomorphic to an ultraproduct of a family of finite MV-chains 
$(A_{i,\alpha_i},1_{i,\alpha_i})_{\alpha_i\in I_i}$. Hence, $A$ is isomorphic to the ultraproduct 
of the family $(A_{1,\alpha_1}\times \cdots \times A_{,n\alpha_n},1_{1,\alpha_1},\dots,1_{n,\alpha_n})$. \\ 
\indent  2) follows from Remark \ref{rkap69} and Theorem \ref{th511}. 
\end{proof}
\indent  Now we turn to hyperarchimedean MV-algebras. By \cite[Corollary 6.5.6]{CDM}, 
being hyperarchimedean is equivalent to being a boolean product of simple MV-algebras. 
We will restrict ourselves to MV-algebras 
which are isomorphic to finite products of simple MV-algebras. One can prove that if 
it is hyperarchimedean, then every sub-MV-algebra is projectable, hyperarchimedean and is 
isomorphic to a finite product of simple MV-algebras. 
\begin{defi} We will say that an MV-algebra is $n$-{\it pseudo-hyperarchimedean} 
if it is elementarily equivalent to some ultraproduct of a family of hyperarchimedean MV-algebras 
which are  isomorphic to products of $n$ simple MV-algebras. 
\end{defi}
\indent  In the same way as Theorem \ref{th611}, and by using Theorem \ref{th511a}, 
one can prove the following. 
\begin{Theorem}\label{thm611}
 Let $(A,1)$ and $(A',1')$ be MV-algebras. \\ 
1) $(A,1)$ is $n$-pseudo-hyperarchimedean if, and only if, $A$ is projectable, 
it is isomorphic to a product 
of $n$ MV-chains $[0,1_1]\times\cdots\times [0,1_n]$ and, for every $i\in \{1,\dots,n\}$, 
the MV-chain $[0,1_i]$ is either finite or infinite and regular. \\ 
2) If $(A,1)$ and $(A',1')$ are $n$-pseudo-hyperarchimedean, then $(A,1_1,\dots, 1_n)\equiv 
(A',1_1',\dots, 1_n')$ if, and only if, 
for every $i\in \{1,\dots,n\}$ either the MV-chains $[0,1_i]$, $[0,1_i']$ 
are finite and isomorphic, or they are both discrete infinite regular and satisfy the same 
formulas $D_{p^m,k}$, or they are infinite dense regular and their Chang $\ell$-groups 
have the same prime invariants of Zakon. 
\end{Theorem}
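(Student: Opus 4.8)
The plan is to follow the proof of Theorem~\ref{th611} line by line, substituting \emph{simple} MV-chains for \emph{finite} ones and invoking Theorems~\ref{thm511} and \ref{th511a} wherever that proof used Theorem~\ref{th511}. The backbone is the same reduction to the individual factors: by the remarks following Proposition~\ref{prop64}, both ``$A$ is projectable'' and ``$A$ is isomorphic to a product of $n$ MV-chains'' are first-order; and by Remark~\ref{rkap69}, once $A\simeq[0,1_1]\times\cdots\times[0,1_n]$ and $A'\simeq[0,1_1']\times\cdots\times[0,1_n']$ in the language $L_{MVn}$, elementary equivalence of $(A,1_1,\dots,1_n)$ and $(A',1_1',\dots,1_n')$ reduces to $[0,1_i]\equiv[0,1_i']$ for every $i$. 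The only genuinely new ingredient is the identification of the infinite MV-chains that can occur as ultraproducts of simple chains, which I read off from the MV-chain/c.o.-group transfer together with the classification of pseudo-c-archimedean c.o.\ groups cited from \cite{LL 13}.

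For part~1, $\Rightarrow$: if $(A,1)$ is $n$-pseudo-hyperarchimedean then $(A,1_1,\dots,1_n)$ is elementarily equivalent to an ultraproduct $B$ of hyperarchimedean MV-algebras, each a product of $n$ simple chains. By the product/ultraproduct interchange established just before Remark~\ref{rkap69}, $B\simeq B_1\times\cdots\times B_n$ where each $B_i$ is an ultraproduct of simple MV-chains. A simple MV-chain is archimedean, so its wound-round is c-archimedean; hence $C(B_i)$ is pseudo-c-archimedean, and by the results of \cite{LL 13} recalled at the end of Section~\ref{section3} (a dense pseudo-c-archimedean c.o.\ group is c-regular, and a discrete one is pseudofinite, hence again c-regular) together with the transfer of Proposition~\ref{prop53} and Lemma~\ref{lmav59}, each $B_i$ is either finite or infinite regular. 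Since projectability and ``product of $n$ chains'' are first-order and pass from $B$ to $A$, and since $[0,1_i]\equiv B_i$ by Remark~\ref{rkap69}, each $[0,1_i]$ is finite or infinite regular. For $\Leftarrow$: treat each factor. If $[0,1_i]$ is finite it is already simple; if it is infinite dense regular it is pseudo-simple by Theorems~\ref{thm511} and \ref{th511a}(1), hence $\equiv$ an ultraproduct of simple chains; if it is infinite discrete regular it is pseudofinite by Theorem~\ref{th511}(2), hence $\equiv$ an ultraproduct of \emph{finite} chains, which are themselves simple. Reassembling through the interchange before Remark~\ref{rkap69} exhibits $A$ as elementarily equivalent to an ultraproduct of products of $n$ simple chains; each such product is hyperarchimedean, a finite product of simple MV-algebras being a boolean product of simple ones (cf.\ \cite[Corollary~6.5.6]{CDM}), so $A$ is $n$-pseudo-hyperarchimedean.

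For part~2, part~1 puts $A$ and $A'$ into the required product form, and Remark~\ref{rkap69} reduces $\equiv$ to the componentwise conditions $[0,1_i]\equiv[0,1_i']$. Since finiteness, atomicity and atomlessness are each first-order, the chains $[0,1_i]$ and $[0,1_i']$ must fall into the same one of three regimes. If both are finite, elementary equivalence of finite structures is isomorphism. If both are infinite discrete (atomic) regular, Theorem~\ref{th511}(1) gives $\equiv$ iff they satisfy the same formulas $D_{p^m,k}$. If both are infinite dense (atomless) regular, Theorem~\ref{th511a}(2) gives $\equiv$ iff their torsion subchains are isomorphic and their Chang $\ell$-groups carry the same prime invariants of Zakon. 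Collecting the three regimes is exactly the stated disjunction.

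The main obstacle is the forward direction of part~1, namely showing that every \emph{infinite} factor $B_i$ is regular. Regularity cannot simply be inherited from the simple factors, since a finite simple chain need not be regular for all $n$; instead regularity emerges only in the ultraproduct, and the cleanest justification routes through the wound-round and the \cite{LL 13} dichotomy splitting pseudo-c-archimedean into dense c-regular and discrete pseudofinite. A secondary point to check is that the dense case really is governed by the pair ``isomorphic torsion subchains and equal Zakon invariants'' of Theorem~\ref{th511a}(2), so that the disjunction as stated captures $[0,1_i]\equiv[0,1_i']$ exactly.
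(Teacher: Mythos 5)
Your proposal is correct and is precisely the argument the paper intends: the paper's entire ``proof'' of Theorem~\ref{thm611} is the one-line remark that one argues in the same way as Theorem~\ref{th611} using Theorem~\ref{th511a}, and you carry out exactly that substitution (simple chains in place of finite ones, the finite/discrete/dense trichotomy resolved through Theorems~\ref{thm511}, \ref{th511a} and \ref{th511} via the transfer of Proposition~\ref{prop53} and Lemma~\ref{lmav59}, and the componentwise reduction through the ultraproduct--product interchange and Remark~\ref{rkap69}, just as in the pseudofinite case). The one caveat you rightly flag is real: Theorem~\ref{th511a}(2) requires isomorphic torsion subchains in addition to equal prime invariants of Zakon, so in the dense regime your argument proves the statement with that clause included, whereas the theorem as printed omits it --- this is an oversight in the paper's statement (the two conditions are independent, e.g.\ the chains $\Gamma(\ZZ+\ZZ\sqrt2,1)$ and $\Gamma(\frac12\ZZ+\ZZ\sqrt2,1)$ have the same Zakon invariants but non-isomorphic torsion subchains), not a defect of your proof.
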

\end{document}